\newcommand{\+}{\nobreakdash-}
\renewcommand{\:}{\colon}
\newcommand{\rarrow}{\longrightarrow}
\newcommand{\ot}{\otimes}
\DeclareMathOperator{\Spec}{Spec}
\DeclareMathOperator{\Hom}{Hom}
\DeclareMathOperator{\Ext}{Ext}
\DeclareMathOperator{\Tor}{Tor}
\newcommand{\modl}{{\operatorname{\mathsf{--mod}}}}
\newcommand{\contra}{{\operatorname{\mathsf{--contra}}}}
\newcommand{\qcoh}{{\operatorname{\mathsf{--qcoh}}}}
\newcommand{\ctrh}{{\operatorname{\mathsf{--ctrh}}}}
\newcommand{\tors}{{\operatorname{\mathsf{-tors}}}}
\newcommand{\ctra}{{\operatorname{\mathsf{-ctra}}}}
\newcommand{\com}{{\operatorname{\mathsf{-com}}}}
\newcommand{\inj}{{\mathsf{inj}}}
\newcommand{\proj}{{\mathsf{proj}}}
\newcommand{\lrarrow}{\mskip.5\thinmuskip\relbar\joinrel\relbar
   \joinrel\rightarrow\mskip.5\thinmuskip\relax}
\newcommand{\llarrow}{\mskip.5\thinmuskip\leftarrow\joinrel\relbar
   \joinrel\relbar\mskip.5\thinmuskip}
\newcommand{\bu}{{\text{\smaller\smaller$\scriptstyle\bullet$}}}
\newcommand{\qs}{{\mathsf{qs}}}
\newcommand{\sep}{{\mathsf{sep}}}
\newcommand{\bs}{{\mathsf{b}}}
\newcommand{\as}{{\mathsf{a}}}
\newcommand{\R}{\widehat R}
\newcommand{\I}{\widehat I}
\newcommand{\fP}{\mathfrak P}
\newcommand{\C}{\mathcal C}
\newcommand{\F}{\mathcal F}
\newcommand{\cO}{\mathcal O}
\newcommand{\sA}{\mathsf A}
\newcommand{\sB}{\mathsf B}
\newcommand{\sC}{\mathsf C}
\newcommand{\sD}{\mathsf D}
\newcommand{\sT}{\mathsf T}
\newcommand{\boZ}{\mathbb Z}
\newcommand{\boQ}{\mathbb Q}
\newcommand{\boL}{\mathbb L}
\newcommand{\boR}{\mathbb R}
\newcommand{\s}{\mathbf s}
\newcommand{\Section}[1]{\bigskip\section{#1}\medskip}
\theoremstyle{plain}
\newtheorem{thm}{Theorem}[section]
\newtheorem{prop}[thm]{Proposition}
\newtheorem{lem}[thm]{Lemma}
\newtheorem{cor}[thm]{Corollary}
\theoremstyle{definition}
\newtheorem{rem}[thm]{Remark}
\newtheorem{exs}[thm]{Examples}
\begin{document}

\title{Remarks on derived complete modules \\
and complexes}

\author{Leonid Positselski}

\address{Institute of Mathematics of the Czech Academy of Sciences \\
\v Zitn\'a~25, 115~67 Prague~1 \\ Czech Republic; and
\newline\indent Laboratory of Algebra and Number Theory \\
Institute for Information Transmission Problems \\
Moscow 127051 \\ Russia} 

\email{positselski@math.cas.cz}

\begin{abstract}
 Let $R$ be a commutative ring and $I\subset R$ a finitely generated
ideal.
 We discuss two definitions of derived $I$\+adically complete
(also derived $I$\+torsion) complexes of $R$\+modules which appear
in the literature: the idealistic and the sequential one.
 The two definitions are known to be equivalent for a weakly proregular
ideal~$I$; we show that they are different otherwise.
 We argue that the sequential approach works well, but the idealistic
one needs to be reinterpreted or properly understood.
 We also consider $I$\+adically flat $R$\+modules.
\end{abstract}

\maketitle

\tableofcontents

\section*{Introduction}
\medskip

\setcounter{subsection}{-1}
\subsection{{}} \label{introd-how-many-subsecn}
 Let $I$ be a finitely generated ideal in a commutative ring~$R$.
 How many abelian categories of $I$\+adically complete (in some sense)
$R$\+modules are there?
 An unsuspecting reader would probably guess that there are none.
 In fact, generally speaking, there are two such abelian categories, one
of them a full subcategory in the other.
 For a Noetherian ring $R$, the two categories coincide.

 Furthermore, how many triangulated categories of derived $I$\+adically
complete complexes are there?
 We argue that the correct answer is ``three'', two of which are
the important polar cases and the third one is kind of intermediate.
 There is also one abelian category of $I$\+torsion $R$\+modules and
two triangulated categories of derived $I$\+torsion complexes.
 These triangulated categories are connected by natural triangulated
functors.
 For a Noetherian ring $R$ (or more generally, for a weakly proregular
ideal~$I$), these functors are equivalences; so the answer to all
the ``how many'' questions reduces to ``one''.

 To be precise, the definitions of derived complete and torsion
complexes that can be found in the literature do not always agree with
our suggested definitions.
 We discuss both, and explain why we view some of our constructions of
the triangulated categories of derived complete and torsion complexes as
improvements upon the ones previously considered by other authors.

 This paper is inspired by Yekutieli's paper~\cite{Yek3} (as well as his
earlier paper~\cite{Yek2}), where some of the results of the present
author have been mentioned.
 The credit is due to Yekutieli for posing several questions to which
the present paper provides the answers.

\subsection{{}} \label{introd-idealistic-completion-subsecn}
 Let us briefly discuss the substantial issue involved, starting
for simplicity with the particular case of a principal ideal $I=(s)
\subset R$ generated by an element $s\in R$.
 Let $M$ be an $R$\+module.
 What is the derived $s$\+adic completion of~$M$?

 A more na\"\i ve approach is to start with the underived $I$\+adic
completion,
$$
 \Lambda_s(M)=\varprojlim\nolimits_{n\ge1}M/s^nM.
$$
 The problem with the functor $\Lambda_s$ is that it is the composition
of a right exact functor assigning to $M$ the system of its quotient
modules $\dotsb\rarrow M/s^3M\rarrow M/s^2M\rarrow M/sM$ with the left
exact functor of projective limit.
 As such, the functor $\Lambda_s$ is neither left nor right exact, and
in fact not even exact in the middle~\cite{Yek0}.

 Nevertheless, one can consider the left derived functor
$\boL_*\Lambda_s$ of $\Lambda_s$, computable by applying $\Lambda_s$
to a projective resolution of an $R$\+module~$M$.
 This is equivalent to replacing $\Lambda_s$ with its better behaved
(right exact) $0$\+th left derived functor $\boL_0\Lambda_s$ and
computing the left derived functor of $\boL_0\Lambda_s$.
 Then one can say that the derived $I$\+adic completion of $M$ is
the complex
$$
 \boL\Lambda_s(M)=\boL(\boL_0\Lambda_s)(M)=\Lambda_s(P_\bu),
$$
where $P_\bu$ is a projective resolution of~$M$.
 In fact, it suffices to require $P_\bu$ to be a flat resolution.
 This approach is taken, in the generality of finitely generated
ideals $I\subset R$, by Porta--Shaul--Yekutieli in the paper~\cite{PSY}
(see also the much earlier~\cite[Section~1]{GM}).
 It is called the ``idealistic derived adic completion'' in~\cite{Yek3}.

\subsection{{}} \label{introd-sequential-completion-subsecn}
 A more sophisticated approach is to construct the derived $I$\+adic
completion as the derived functor of projective limit of the derived
functors of the passage to the quotient module $M\longmapsto M/s^nM$.
 Furthermore, the latter derived functors are interpreted as taking
an $R$\+module $M$ to the two-term complexes $M\overset{s^n}\rarrow M$,
concentrated in the cohomological degrees~$-1$ and~$0$.
 Then the complex $\boR\varprojlim_{n\ge1}(M\overset{s^n}\rarrow M)$
can be computed as
$$
 \boR\varprojlim\nolimits_{n\ge1}(M\overset{s^n}\rarrow M) =
 \boR\Hom_R(R\to R[s^{-1}],\>M).
$$
 This construction can be found in the paper~\cite[Section~1.1]{Beil};
see also~\cite[Section~1.5]{KS} and~\cite[Section~3.4]{BS}
(once again, the much earlier paper~\cite[Section~2]{GM} is relevant).
 This approach also extends naturally to finitely generated ideals
$I\subset R$ \,\cite{GM,BS}, for which it is helpful to choose a finite
sequence of generators $s_1$,~\dots, $s_m\in I$, and then show
that the construction does not depend on a chosen set of generators.
 For this reason, it is called the ``sequential derived completion''
in~\cite{Yek3}.

\subsection{{}}
 One important difference between the two approaches is that
the idealistic derived completion is sensitive to the choice of
a base ring~$R$.
 Given a subring $R'\subset R$ such that $s\in R'$, the derived functors
$\boL_R\Lambda_s$ and $\boL_{R'}\Lambda_s$ computed in the categories
of $R$\+modules and $R'$\+modules do \emph{not} agree with each other
in any meaningful way, generally speaking.
 The sequential derived completion construction, on the other hand,
essentially happens over the ring $\boZ[s]$.
 In particular, for any commutative ring homomorphism $R'\rarrow R$
and an element $s'\in R'$, denoting by~$s$ the image of $s'$ in $R$,
for any $R$\+module $M$ one has a natural isomorphism
$$
 \boR\Hom_R(R\to R[s^{-1}],\>M)\,\simeq\,
 \boR\Hom_{R'}(R'\to R'[s'{}^{-1}],\>M)
$$
in the derived category of $R'$\+modules.

\subsection{{}} \label{introd-derived-torsion-subsecn}
 Torsion is easier to think of than completion; but the same
difference between two approaches arises.
 Denote by $\Gamma_s(M)\subset M$ the submodule consisting of all
the \emph{$s$\+torsion} elements, i.~e., elements $m\in M$ for which
there exists an integer $n\ge1$ such that $s^nm=0$.
 The functor $\Gamma_s(M)$ is left exact, so there is no problem
involved in considering its right derived functor $\boR\Gamma_s$,
computable as
$$
 \boR\Gamma_s(M)=\Gamma_s(J^\bu),
$$
where $J^\bu$ is an injective resolution of~$M$.
 Following the terminology of~\cite{Yek3}, this is the ``idealistic
derived torsion functor''.

 Alternatively, one can consider the functors $M\longmapsto{}_{s^n}M$,
assigning to an $R$\+module $M$ its submodule of all elements
annihilated by~$s^n$.
 Then one can interpret the right derived functor of $M\longmapsto
{}_{s^n}M$ as taking an $R$\+module $M$ to the two-term complex
$M\overset{s^n}\rarrow M$, concentrated in the cohomological
degrees~$0$ and~$1$.
 Finally, the derived $I$\+torsion of $M$ is defined as the inductive
limit
$$
 \varinjlim\nolimits_{n\ge1}(M\overset{s^n}\rarrow M) =
 (R\to R[s^{-1}])\ot_R M.
$$
 This is called the ``sequential derived torsion functor''
in~\cite{Yek3}.

 Similarly to the two derived completions, the two derived torsions
differ in how they behave with respect to the ring changes.
 The idealistic derived torsion is sensitive to the choice of
a base ring $R$, while the sequential derived torsion is
indifferent to it.

\subsection{{}}
 Let us say a few words about the \emph{weak proregularity} condition,
which plays a central role in our discussion.
 A principal ideal $I=(s)\subset R$ is weakly proregular if and only if
the \emph{$s$\+torsion} in $R$ is \emph{bounded}, i.~e., there exists
an integer $n_0\ge1$ such that $s^nr=0$ for some $n\ge1$ and $r\in R$
implies $s^{n_0}r=0$.

 For any $R$\+module $M$ there is a natural morphism
$$
 \boR\Gamma_s(M)\lrarrow(R\to R[s^{-1}])\ot_R M
$$
in the derived category of $R$\+modules.
 It turns out that this morphism is an isomorphism for all $R$\+modules
$M$ if and only if the $s$\+torsion in $R$ is bounded.
 It suffices to check this condition for injective $R$\+modules $M=J$.
 A version of this result mentioning the weak proregularity in place of
the bounded torsion holds for any finitely generated ideal $I\subset R$
\,\cite[Lemma~2.4]{LC}, \cite[Theorem~3.2]{Sch}, \cite[Theorem~4.24]{PSY}.

 Similarly, for any $R$\+module $M$ there is a natural morphism
$$
 \boR\Hom_R(R\to R[s^{-1}],\>M)\lrarrow\boL\Lambda_s(M)
$$
in the derived category of $R$\+modules.
 Once again, this morphism is an isomorphism for all $R$\+modules $M$ if
and only if the $s$\+torsion in $R$ is bounded.
 It suffices to check this condition for the free $R$\+module with
a countable set of generators $M=\bigoplus_{n=1}^\infty R$.
 A version of this result with the bounded torsion replaced by
 weak proregularity holds for any finitely generated ideal $I\subset R$
\cite[Remark~7.8]{HS}.

\subsection{{}}
 Various triangulated categories of derived complete and torsion
modules discussed in this paper are related, in one way or another,
to the two derived completion and two derived torsion functors
mentioned in Sections~\ref{introd-idealistic-completion-subsecn},
\ref{introd-sequential-completion-subsecn}
and~\ref{introd-derived-torsion-subsecn}.

\subsection{{}}
 Before we finish this introduction, let us say a few words about our
motivation.
 The objects of the two abelian categories of derived $I$\+adically
complete $R$\+modules mentioned in Section~\ref{introd-how-many-subsecn}
are called \emph{$I$\+contramodule $R$\+modules} and \emph{quotseparated
$I$\+contramodule $R$\+modules} in this paper.
 The following example is illuminating.

 Let $R=k[x_1,\dotsc,x_m]$ be the ring of polynomials in a finite
number of variables over a field~$k$ and $I=(x_1,\dotsc,x_m)\subset R$
be the ideal generated by the elements~$x_j$.
 Let $\C$ be the coassociative, cocommutative, counital coalgebra
over~$k$ such that the dual topological algebra
$\C^*=k[[x_1,\dotsc,x_m]]$ is the algebra of formal Taylor power series
in the variables~$x_j$.
 Then the abelian category of $I$\+torsion $R$\+modules is equivalent
to the category of $\C$\+comodules.
 Furthermore, the abelian category of $I$\+contramodule $R$\+modules
(which coincides with the abelian category of quotseparated
$I$\+contra\-module $R$\+modules in this case, as the ring $R$ is
Noetherian) is equivalent to the abelian category of $\C$\+contramodules.
 The latter category was defined by Eilenberg and Moore
in~\cite[Section~III.5]{EM}; we refer to our overview~\cite{Prev}
for a discussion.

 For any finitely generated ideal $I$ in a commutative ring $R$,
the category of $I$\+torsion $R$\+modules is a Grothendieck abelian
category.
 On the other hand, both the abelian categories of $I$\+contramodule
$R$\+modules and quotseparated $I$\+contramodule $R$\+modules are
locally presentable abelian categories with enough projective objects.
 We believe that the latter class of abelian categories, which are
dual-analogous or ``covariantly dual'' to Grothendieck abelian
categories~\cite{PR,Pper}, is not receiving the attention that
it deserves.
 Thus we use this opportunity to point out and discuss two classes
of examples of locally presentable abelian categories with enough
projectives appearing naturally in commutative algebra.

\subsection{{}} \label{introd-notation}
 Let us make some general notational conventions.
 Throughout this paper, $R$ is a commutative ring and $I\subset R$
is a finitely generated ideal.
 When we need to choose a finite set of generators of
the ideal $I$, we denote them by $s_1$,~\dots, $s_m\in I$.
 The sequence of elements $s_1$,~\dots, $s_m$ is denoted by~$\s$
for brevity.

 Given an abelian (or Quillen exact) category $\sA$, we denote its
bounded and unbounded derived categories by $\sD^\bs(\sA)$,
$\sD^+(\sA)$, $\sD^-(\sA)$, and $\sD(\sA)$, as usual.
 The abelian category of (arbitrarily large)
$R$\+modules is denoted by $R\modl$.

\subsection*{Acknowledgement}
 I~am grateful to Amnon Yekutieli for sharing an early version of his
paper~\cite{Yek3}, as well as for kindly including an advertisement
of a result of mine in~\cite[Remark~4.12]{Yek2}.
 I~also wish to thank two anonymous referees for many helpful suggestions. 
 The author is supported by the GA\v CR project 20-13778S and
research plan RVO:~67985840.

\Section{Derived Complete Modules} \label{derived-complete-modules-secn}

 The definition of an \emph{Ext-$p$-complete} (abelian, or more
generally, nilpotent) \emph{group} for a prime number~$p$ goes back to
the book of Bousfield and Kan~\cite[Sections~VI.2\+-4]{BK}.
 Under the name of \emph{weakly $l$\+complete} abelian groups (where
$l$~is still a prime number), they were discussed by Jannsen
in~\cite[Section~4]{Jan}.
 Even earlier, the abelian groups decomposable as products of
Ext-$p$-complete abelian groups over the prime numbers~$p$ were studied
by Harrison in~\cite[Section~2]{Harr} under the name of ``co-torsion
abelian groups''; this approach was generalized by Matlis in~\cite{Mat}.
 In our terminology, Ext-$p$-complete abelian groups are called
\emph{$p$\+contramodule $\boZ$\+modules} (we refer to the introductions
to the papers~\cite{Pcta,PMat} for a discussion).

 Given an element $s\in R$, consider the ring $R[s^{-1}]$ obtained
by adjoining to $R$ an element inverse to~$s$.
 Denoting by $S\subset R$ the multiplicative subset
$S=\{1,s,s^2,\dotsc\}$, one has $R[s^{-1}]=S^{-1}R$.
 One can easily see that the projective dimension of the $R$\+module
$R[s^{-1}]$ does not exceed~$1$.

 An $R$\+module $C$ is said to be
an \emph{$s$\+contramodule} if
$$
 \Hom_R(R[s^{-1}],C)=0=
 \Ext^1_R(R[s^{-1}],C).
$$
 A weaker condition is sometimes useful: an $R$\+module $C$ is said to
be \emph{$s$\+contra\-ad\-justed} if $\Ext^1_R(R[s^{-1}],C)=0$.

 An $R$\+module $C$ is said to be an \emph{$I$\+contramodule} (or
an \emph{$I$\+contramodule $R$\+module}) if $C$ is
an $s$\+contramodule for every element $s\in I$.
 It suffices to check this condition for any chosen set of generators
$s=s_1$, $s_2$,~\dots,~$s_m$ of the ideal $I\subset R$
\cite[Theorem~5.1]{Pcta}.

\begin{lem} \label{I-contramodule-R-modules-lemma}
 The full subcategory of $I$\+contramodule $R$\+modules
$R\modl_{I\ctra}$ is closed under the kernels, cokernels, extensions,
and infinite products (hence also under all limits) in
the category of $R$\+modules $R\modl$.
 Consequently, the category $R\modl_{I\ctra}$ is abelian and
the inclusion functor $R\modl_{I\ctra}\rarrow R\modl$ is exact.
\end{lem}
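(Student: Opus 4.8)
The plan is to reduce everything to the case of a principal ideal $I=(s)$, and to exploit the fact that the $R$\+module $R[s^{-1}]$ has projective dimension at most~$1$. By definition, an $R$\+module $C$ is an $I$\+contramodule if and only if it is an $s_j$\+contramodule for each generator $s_j$, $1\le j\le m$; equivalently, $C$ lies in the intersection $\bigcap_{j=1}^m R\modl_{s_j\ctra}$ of the full subcategories of $s_j$\+contramodules. Since an intersection of full subcategories each closed under kernels, cokernels, extensions, and products is again closed under all of these operations, it suffices to prove the lemma for a single element $s=s_j$, i.e.\ for the category $R\modl_{s\ctra}$ of $s$\+contramodule $R$\+modules.

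So fix $s\in R$ and work with the two-object vanishing condition $\Hom_R(R[s^{-1}],C)=0=\Ext^1_R(R[s^{-1}],C)$. First I would observe that since $\operatorname{pd}_R R[s^{-1}]\le1$, the functor $\Ext^1_R(R[s^{-1}],{-})$ is right exact, while $\Hom_R(R[s^{-1}],{-})$ is left exact, and together they form (the nontrivial part of) a long exact sequence: for any short exact sequence $0\to C'\to C\to C''\to 0$ of $R$\+modules one gets
$$
 0\to\Hom_R(R[s^{-1}],C')\to\Hom_R(R[s^{-1}],C)\to\Hom_R(R[s^{-1}],C'')
$$
$$
 \to\Ext^1_R(R[s^{-1}],C')\to\Ext^1_R(R[s^{-1}],C)\to\Ext^1_R(R[s^{-1}],C'')\to0.
$$
Closure under extensions is then immediate: if $C'$ and $C''$ are $s$\+contramodules, the four outer terms vanish, forcing the two middle terms to vanish as well. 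Closure under products follows because both $\Hom_R(R[s^{-1}],{-})$ and $\Ext^1_R(R[s^{-1}],{-})$ commute with arbitrary products --- for $\Hom$ this is formal, and for $\Ext^1$ one uses that a fixed finite free resolution $0\to R\to R\to R[s^{-1}]\to0$ computes $\Ext^1$, and the complex $\Hom_R$ of a bounded complex of finitely generated free modules into a product is the product of the complexes, so cohomology commutes with the product.

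The genuinely substantive points are closure under kernels and cokernels, and this is where the projective dimension bound does the real work. Given a morphism $f\:C\to D$ of $s$\+contramodules, factor it through its image and apply the long exact sequence above to the short exact sequences $0\to\ker f\to C\to\operatorname{im}f\to0$ and $0\to\operatorname{im}f\to D\to\operatorname{coker}f\to0$. From the second sequence, right exactness of $\Ext^1_R(R[s^{-1}],{-})$ gives $\Ext^1_R(R[s^{-1}],\operatorname{coker}f)=0$ at once, and then the tail of the long exact sequence of the second short exact sequence, together with $\Hom_R(R[s^{-1}],D)=0$, shows $\Hom_R(R[s^{-1}],\operatorname{coker}f)$ is a quotient of $\Hom_R(R[s^{-1}],\operatorname{coker}f)$ --- more precisely one reads off $\Hom_R(R[s^{-1}],\operatorname{coker}f)=0$ from exactness at that spot once $\Hom_R(R[s^{-1}],D)=0$ and $\Ext^1_R(R[s^{-1}],\operatorname{im}f)$ has been controlled. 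So the argument proceeds in the order: (1)~$\operatorname{coker}f$ is $s$\+contraadjusted by right exactness of $\Ext^1$; (2)~$\operatorname{im}f$ is $s$\+contraadjusted, being squeezed between $C$ and $\operatorname{coker}f$ in $0\to\operatorname{im}f\to D\to\operatorname{coker}f\to0$ via $\Ext^1_R(R[s^{-1}],D)=0\to\Ext^1_R(R[s^{-1}],\operatorname{coker}f)$, hmm, rather one uses $0\to\operatorname{im}f\to D$ gives an injection on $\Hom$ and then the piece $\Ext^1_R(R[s^{-1}],\operatorname{im}f)\to\Ext^1_R(R[s^{-1}],D)=0$ need not be injective --- the cleaner route is to use $0\to\ker f\to C\to\operatorname{im}f\to0$: since $C$ is an $s$\+contramodule, $\Ext^1_R(R[s^{-1}],\operatorname{im}f)$ is a quotient of $\Ext^1_R(R[s^{-1}],C)=0$, hence vanishes, and $\Hom_R(R[s^{-1}],\operatorname{im}f)$ injects into $\Hom_R(R[s^{-1}],C)=0$; so $\operatorname{im}f$ is an $s$\+contramodule; (3)~feeding this back, $\operatorname{coker}f$ is an $s$\+contramodule (both conditions now) from $0\to\operatorname{im}f\to D\to\operatorname{coker}f\to0$, and $\ker f$ is an $s$\+contramodule from $0\to\ker f\to C\to\operatorname{im}f\to0$ using the left end of the long exact sequence.

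The main obstacle --- really the only place one must be careful --- is justifying the right exactness of $\Ext^1_R(R[s^{-1}],{-})$ and the fact that $\Ext^{\ge2}_R(R[s^{-1}],{-})=0$; both follow from $\operatorname{pd}_R R[s^{-1}]\le1$, which the excerpt has already granted us. Once the six-term exact sequence is in hand, everything else is diagram chasing, and I would present it by tracking the two vanishing conditions through the three short exact sequences as indicated, then conclude formally: a full subcategory of an abelian category closed under kernels and cokernels, on which the inclusion is automatically exact because short exact sequences in the subcategory are short exact in $R\modl$, is itself abelian. Finally, closure under all limits follows from closure under products and kernels (equalizers) in the standard way.
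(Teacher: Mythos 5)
Your argument is correct and is essentially the paper's own: the text only cites \cite[Proposition~1.1]{GL} and \cite[Theorem~1.2(a)]{Pcta}, and those references prove the statement exactly by your route --- the right perpendicular subcategory to a class of modules of projective dimension~$\le1$ is closed under kernels, cokernels, extensions, and products, read off from the six-term exact sequence. Two small corrections. First, $R[s^{-1}]$ is not a cyclic $R$-module in general (e.g.\ $\boZ[1/2]$ over $\boZ$), so $0\to R\to R\to R[s^{-1}]\to0$ is not a resolution; the correct one is the telescope resolution $0\to R^{(\omega)}\to R^{(\omega)}\to R[s^{-1}]\to0$ by countably generated free modules, which serves your purpose just as well, since $\Hom_R(A,\prod_i B_i)=\prod_i\Hom_R(A,B_i)$ for an arbitrary module $A$ in the first argument and cohomology of complexes of abelian groups commutes with products. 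Second, in your step~(2) the group $\Hom_R(R[s^{-1}],\operatorname{im}f)$ injects into $\Hom_R(R[s^{-1}],D)$ via the inclusion $\operatorname{im}f\subset D$ (not into $\Hom_R(R[s^{-1}],C)$, of which it is not a subgroup, $\operatorname{im}f$ being a quotient of~$C$); the conclusion is unaffected since both vanish.
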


\begin{proof}
 This is~\cite[Proposition~1.1]{GL} or~\cite[Theorem~1.2(a)]{Pcta}.
\end{proof}

 Denoting by $\overline R\subset R$ the subring in $R$ generated by
the elements $s_1$,~\dots, $s_m$ over $\boZ$ and by
$\overline I\subset \overline R$ the ideal in $\overline R$ generated
by the same elements, one observes that an $R$\+module $C$ is
an $I$\+contramodule if and only if its underlying
$\overline R$\+module $C$ is an $\overline I$\+contramodule.
 In this specific sense, the definition of $I$\+contramodule
$R$\+modules reduces to the case of Noetherian rings.
 Moreover, one can replace $\overline R$ by the polynomial ring
$\widetilde R=\boZ[s_1,\dotsc,s_m]$ with the ideal $\widetilde I
=(s_1,\dotsc,s_m)$; an $R$\+module is an $I$\+contramodule if and
only if its underlying $\widetilde R$\+module is
an $\widetilde I$\+contramodule.

 The interpretation of the $s$\+contramodule $R$\+modules as
the \emph{$R$\+modules with $s$\+power infinite summation operation}
and the $I$\+contramodule $R$\+modules as the \emph{$R$\+modules
with $[s_1,\dotsc,s_m]$\+power infinite summation operation} is
discussed in~\cite[Sections~3\+-4]{Pcta} (see
also~\cite[Appendix~B]{Pweak} for the Noetherian case).

 The $I$\+adic completion of an $R$\+module $C$ is defined as
$$
 \Lambda_I(C)=\varprojlim\nolimits_{n\ge1}C/I^nC.
$$
 An $R$\+module $C$ is said to be \emph{$I$\+adically separated}
if the canonical morphism $C\rarrow\Lambda_I(C)$ is injective,
and we say that $C$ is \emph{$I$\+adically complete} if the morphism
$C\rarrow\Lambda_I(C)$ is surjective.

 Any $I$\+adically complete and separated $R$\+module is
an $I$\+contramodule.
 Any $I$\+contramodule $R$\+module is $I$\+adically
complete~\cite[Theorem~5.6]{Pcta}, but it need not be
$I$\+adically separated.

 An $I$\+contramodule $R$\+module is said to be \emph{quotseparated}
if it is a quotient $R$\+module of an $I$\+adically separated and
complete $R$\+module.
 We denote the full subcategory of $I$\+adically separated and
complete $R$\+modules by $R\modl_{I\ctra}^\sep\subset R\modl$
and the full subcategory of quotseparated $I$\+contramodule
$R$\+modules by $R\modl_{I\ctra}^\qs\subset R\modl$.
 So there are inclusions of full subcategories {\hbadness=1400
$$
 R\modl_{I\ctra}^\sep\,\subset\, R\modl_{I\ctra}^\qs\,\subset\,
 R\modl_{I\ctra}\,\subset\, R\modl.
$$
 An example of a quotseparated, but not separated $I$\+contramodule
$R$\+module can be found already in~\cite[Example~3.20]{Yek0}
or~\cite[Example~2.5]{Sim} (see also~\cite[Example~4.33]{PSY}).
 The additive category $R\modl_{I\ctra}^\sep$ is
not abelian~\cite[Example~2.7\,(1)]{Pcta}; but
the category $R\modl_{I\ctra}^\qs$ is, as the next lemma tells.

\begin{lem}
 The full subcategory of quotseparated $I$\+contramodule $R$\+modules
$R\modl_{I\ctra}^\qs$ is closed under subobjects, quotient objects,
and infinite products in $R\modl_{I\ctra}$, and closed under
the kernels, cokernels, and infinite products (hence also under all
limits) in $R\modl$.
 Consequently, the category $R\modl_{I\ctra}^\qs$ is abelian and
the inclusion functors $R\modl_{I\ctra}^\qs\rarrow R\modl_{I\ctra}
\rarrow R\modl$ are exact.
\end{lem}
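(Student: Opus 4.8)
The plan is to reduce the closure properties of $R\modl_{I\ctra}^\qs$ to those of $R\modl_{I\ctra}$ already established in Lemma~\ref{I-contramodule-R-modules-lemma}, together with one genuinely new ingredient: closure of $R\modl_{I\ctra}^\sep$-images under the relevant operations. The key observation is that an object of $R\modl_{I\ctra}^\qs$ is, by definition, a quotient of some object of $R\modl_{I\ctra}^\sep$, so to prove that a subobject, quotient object, or product (taken inside $R\modl$ or inside $R\modl_{I\ctra}$) of quotseparated modules is again quotseparated, it suffices to exhibit it as a quotient of a separated and complete module.

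First I would record that all the listed operations make sense: since $R\modl_{I\ctra}$ is abelian and closed under kernels, cokernels, and products in $R\modl$ (Lemma~\ref{I-contramodule-R-modules-lemma}), and $R\modl_{I\ctra}^\qs\subset R\modl_{I\ctra}$, a subobject or quotient object of a quotseparated module computed in $R\modl$ coincides with the one computed in $R\modl_{I\ctra}$ and lies in $R\modl_{I\ctra}$; likewise for products. So the only thing to check is that these objects actually lie in the subcategory $R\modl_{I\ctra}^\qs$. For quotient objects this is immediate: a quotient of a quotient of a separated complete module is again such a quotient. For products, I would use that a product of $I$-adically separated and complete $R$-modules is again $I$-adically separated and complete — separatedness and completeness are both preserved by products since $\Lambda_I$ commutes with products and a product of injections (resp.\ surjections) is an injection (resp.\ surjection) — and then a product of surjections $\prod_\alpha P_\alpha \twoheadrightarrow \prod_\alpha C_\alpha$ exhibits $\prod_\alpha C_\alpha$ as quotseparated.

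The main obstacle is closure under subobjects. Given $C \in R\modl_{I\ctra}^\qs$ with a chosen surjection $\pi\: P \twoheadrightarrow C$ from a separated complete module $P$, and a subobject $C' \subset C$, I would form the preimage $P' = \pi^{-1}(C') \subset P$; then $\pi$ restricts to a surjection $P' \twoheadrightarrow C'$, so it suffices to show $P'$ is (or surjects from) an $I$-adically separated and complete module. Now $P'$ is automatically $I$-adically separated, being a submodule of the separated module $P$; the issue is completeness, which submodules need not inherit. Here I expect to invoke that $P'$, as a submodule of an $I$-contramodule, is itself an $I$-contramodule (Lemma~\ref{I-contramodule-R-modules-lemma}), hence $I$-adically complete by the cited \cite[Theorem~5.6]{Pcta}; being also separated, $P'$ lies in $R\modl_{I\ctra}^\sep$, and then $P' \twoheadrightarrow C'$ does the job. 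With subobjects, quotients, and products handled, the closure of $R\modl_{I\ctra}^\qs$ under kernels, cokernels, and all limits in $R\modl$ follows formally: kernels are subobjects, cokernels are quotients of the target, and arbitrary limits are built from products and kernels. The category being closed under kernels and cokernels inside the abelian category $R\modl_{I\ctra}$ (with the same zero object and the same biproducts) makes it abelian with the inclusion exact, and composing with the exact inclusion $R\modl_{I\ctra} \rarrow R\modl$ finishes the proof.
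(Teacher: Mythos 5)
Your overall strategy coincides with the paper's: reduce everything to showing that quotient objects, products, and subobjects (taken in $R\modl_{I\ctra}$) of quotseparated modules are again quotseparated; dispose of quotients trivially; handle products by observing that a product of $I$\+adically separated and complete modules is separated and complete and that a product of surjections is surjective; and handle subobjects by pulling the subobject back along a chosen surjection from a separated and complete module. The paper packages this last step as a general statement about a class $\sC$ closed under subobjects in an abelian category $\sA$ (with $\sC$ the separated $I$\+contramodules inside $\sA=R\modl_{I\ctra}$), but when unfolded its proof is exactly your preimage argument, so the route is essentially the same.

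There is, however, one genuinely incorrect justification in your subobject step. You assert that $P'=\pi^{-1}(C')$ is an $I$\+contramodule ``as a submodule of an $I$\+contramodule'', citing Lemma~\ref{I-contramodule-R-modules-lemma}. That lemma gives closure of $R\modl_{I\ctra}$ under kernels, cokernels, extensions, and products --- \emph{not} under arbitrary submodules, and the submodule statement is false: $\boZ\subset\boZ_p$ is a submodule of a $p$\+contramodule that is not a $p$\+contramodule (equivalently, every $I$\+contramodule is $I$\+adically complete by \cite[Theorem~5.6]{Pcta}, while submodules of complete modules need not be complete). The step is easily repaired, and the repair is precisely where you must use that $C'$ is a subobject of $C$ \emph{in} $R\modl_{I\ctra}$, i.e., that $C'$ is itself an $I$\+contramodule: then $C/C'$ is an $I$\+contramodule (a cokernel of a morphism of $I$\+contramodules), and $P'=\ker(P\to C\to C/C')$ is the kernel of a morphism between $I$\+contramodules, hence an $I$\+contramodule by Lemma~\ref{I-contramodule-R-modules-lemma}. (The same remark covers the ``kernels in $R\modl$'' case, where the kernel is automatically an $I$\+contramodule for the same reason.) With that correction, the remainder of your argument --- separatedness of $P'$ as a submodule of the separated module $P$, completeness of separated $I$\+contramodules, and the formal deduction of abelianness and exactness of the inclusions --- goes through and matches the paper.
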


\begin{proof}
 The basic observation is that any submodule of an $I$\+adically
separated $R$\+module is $I$\+adically separated.
 Hence the class $\sC$ of all $I$\+adically separated $I$\+contramodules
(\,$=$~$I$\+adically separated and complete $R$\+modules) is
closed under subobjects in $R\modl_{I\ctra}$.
 Moreover, the class of all $I$\+adically separated $R$\+modules is
closed under products in $R\modl$; hence the class $\sC$ is closed
under products in the abelian category $\sA=R\modl_{I\ctra}$.

 Now for any abelian category $\sA$ and a class of objects $\sC
\subset\sA$ such that $\sC$ is closed under subobjects in $\sA$,
the class $\sB$ of all quotient objects of objects from $\sC$ is closed
under subobjects \emph{and} quotient objects in~$\sA$.
 Hence $\sB$ is an abelian category with an exact inclusion functor
$\sB\rarrow\sA$.
 If, moreover, the class $\sC$ is closed under products in $\sA$ and
the product functors in $\sA$ are exact, then the full subcategory
$\sB$ is closed under products in~$\sA$.

 In the situation at hand, these observations are applicable to
the abelian category $\sA=R\modl_{I\ctra}$, producing the abelian
category $\sB=R\modl_{I\ctra}^\qs$.
 Then the remaining assertions of the lemma follow from
Lemma~\ref{I-contramodule-R-modules-lemma}.
\end{proof}

 Both} the full subcategories $R\modl_{I\ctra}$ and
$R\modl_{I\ctra}^\qs$ are reflective in $R\modl$ (i.~e., their inclusion
functors have left adjoints).
 The reflector $\Delta_I\:R\modl\rarrow R\modl_{I\ctra}$ is constructed
and discussed at length in~\cite[Sections~6\+-7]{Pcta}
(see formula~\eqref{delta-computed} in Section~\ref{wpr-secn} below).
 The functor $\Delta_I\:R\modl\rarrow R\modl_{I\ctra}$ is right exact,
because it is a left adjoint; since the inclusion $R\modl_{I\ctra}
\rarrow R\modl$ is an exact functor, the composition $R\modl\rarrow
R\modl_{I\ctra}\rarrow R\modl$ is also right exact.
 As any reflector onto a full subcategory, the functor $\Delta_I\:
R\modl\rarrow R\modl$ is idempotent, and its essential image is
the whole full subcategory $R\modl_{I\ctra}\subset R\modl$.

 As pointed out in~\cite[Section~1]{Yek0} and~\cite[Section~1]{Yek3},
the $I$\+adic completion functor $\Lambda_I\:R\modl\rarrow R\modl$ is
neither left, nor right exact (even though, by~\cite[Theorem~5.8]{Pcta},
\,$\Lambda_I\:R\modl\rarrow R\modl_{I\ctra}^\sep$ is the reflector onto
the full subcategory of $I$\+adically separated and complete modules
in $R\modl$).
 The next proposition describes the reflector
$R\modl\rarrow R\modl_{I\ctra}^\qs$ as the $0$\+th left derived functor
of the functor~$\Lambda_I$, that is,
$\boL_0\Lambda_I\:R\modl\rarrow R\modl_{I\ctra}^\qs$.

\begin{prop} \label{quotseparated-reflector}
\textup{(a)} For any $R$\+module $C$, there are two natural surjective
$R$\+module morphisms\/ $\Delta_I(C)\rarrow\boL_0\Lambda_I(C)\rarrow
\Lambda_I(C)$. \par
\textup{(b)} For any $R$\+module $C$, the $R$\+module\/
$\boL_0\Lambda_I(C)$ is a quotseparated $I$\+contramodule. \par
\textup{(c)} The functor\/ $\boL_0\Lambda_I\:R\modl\rarrow
R\modl_{I\ctra}^\qs$ is left adjoint to the inclusion
$R\modl_{I\ctra}^\qs\rarrow R\modl$. \par
\textup{(d)} Consequently, the functor $\boL_0\Lambda_I\:R\modl\rarrow
R\modl$ is idempotent, and its essential image is the full
subcategory $R\modl_{I\ctra}^\qs\subset R\modl$.
\end{prop}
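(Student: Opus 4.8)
The plan is to realise $\boL_0\Lambda_I$ via a free presentation and then to play the two reflectors $\Delta_I$ and $\Lambda_I$ off against each other. Fix a free resolution of $C$ and in particular an exact sequence $P_1\rarrow P_0\rarrow C\rarrow0$ with $P_0$, $P_1$ free; since $\boL\Lambda_I$ may be computed using flat (hence free) resolutions, $\boL_0\Lambda_I(C)=\operatorname{coker}(\Lambda_I(P_1)\rarrow\Lambda_I(P_0))$ independently of the choices. By~\cite[Theorem~5.8]{Pcta} the modules $\Lambda_I(P_0)$ and $\Lambda_I(P_1)$ are $I$\+adically separated and complete, hence $I$\+contramodules; since $R\modl_{I\ctra}$ is closed under cokernels in $R\modl$ (Lemma~\ref{I-contramodule-R-modules-lemma}), $\boL_0\Lambda_I(C)$ is an $I$\+contramodule, and as an $R$\+module quotient of the separated complete module $\Lambda_I(P_0)$ it is quotseparated. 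This proves~(b) and shows $\boL_0\Lambda_I$ takes values in $R\modl_{I\ctra}^\qs$. The composite $P_0\rarrow\Lambda_I(P_0)\rarrow\boL_0\Lambda_I(C)$ kills the image of $P_1$ — it factors through $\Lambda_I(P_1)\rarrow\Lambda_I(P_0)\rarrow\boL_0\Lambda_I(C)$, which vanishes by construction of the cokernel — so it descends to a morphism $u_C\:C\rarrow\boL_0\Lambda_I(C)$, natural in~$C$.

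I would prove~(c) in the form of a natural isomorphism $\Hom_R(\boL_0\Lambda_I(C),D)\simeq\Hom_R(C,D)$, for $D\in R\modl_{I\ctra}^\qs$, given by precomposition with $u_C$. Applying $\Hom_R(-,D)$ to the sequences $\Lambda_I(P_1)\rarrow\Lambda_I(P_0)\rarrow\boL_0\Lambda_I(C)\rarrow0$ and $P_1\rarrow P_0\rarrow C\rarrow0$ and comparing kernels reduces this to the key lemma: \emph{for every free $R$\+module $F$ and every quotseparated $I$\+contramodule $D$, the map $\Hom_R(\Lambda_I(F),D)\rarrow\Hom_R(F,D)$ induced by $F\rarrow\Lambda_I(F)$ is bijective.} First, the natural morphism $\Delta_I(F)\rarrow\Lambda_I(F)$ is surjective: its image is an $I$\+adically dense $R$\+submodule of $\Lambda_I(F)$ which is again an $I$\+contramodule, so the cokernel is an $I$\+contramodule $Q$ with $IQ=Q$, whence $Q=0$ by the Nakayama lemma for $I$\+contramodules (\cite{Pcta}); consequently $\Hom_R(\Lambda_I(F),D)\rarrow\Hom_R(\Delta_I(F),D)=\Hom_R(F,D)$ is injective. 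For surjectivity, write $D$ as an $R$\+module quotient $E\rarrow D$ with $E$ $I$\+adically separated and complete (possible because $D$ is quotseparated), lift a given $f\:F\rarrow D$ to $F\rarrow E$ ($F$ being projective), extend $F\rarrow E$ uniquely to $\Lambda_I(F)\rarrow E$ using that $\Lambda_I$ is the reflector onto $R\modl_{I\ctra}^\sep$ (\cite[Theorem~5.8]{Pcta}), and compose with $E\rarrow D$. I expect this lifting step to be the crux of the whole proposition: a homomorphism out of $\Lambda_I(F)$ into a general $I$\+contramodule is in general \emph{not} determined by its restriction to~$F$, and it is precisely quotseparatedness of $D$ that repairs this.

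Part~(a) then follows. The morphism $\boL_0\Lambda_I(C)\rarrow\Lambda_I(C)$ is the one induced on cokernels by $\Lambda_I(P_0)\rarrow\Lambda_I(C)$ (the composite $\Lambda_I(P_1)\rarrow\Lambda_I(P_0)\rarrow\Lambda_I(C)$ vanishes), and it is surjective because, writing $K=\ker(P_0\rarrow C)$, the short exact sequences of inverse systems $0\rarrow(K+I^nP_0)/I^nP_0\rarrow P_0/I^nP_0\rarrow C/I^nC\rarrow0$ have leftmost terms with surjective transition maps, so that $\varprojlim^1$ of that system vanishes and $\Lambda_I(P_0)\rarrow\Lambda_I(C)$ is onto. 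For the morphism $\Delta_I(C)\rarrow\boL_0\Lambda_I(C)$: as $\boL_0\Lambda_I(C)$ is an $I$\+contramodule and $u_C$ is natural, the universal property of $\Delta_I$ produces a natural morphism $\Delta_I(C)\rarrow\boL_0\Lambda_I(C)$ factoring $u_C$; evaluating the naturality square of this transformation at $P_0\rarrow C$, and using that $\Delta_I(P_0)\rarrow\Delta_I(C)$ is onto ($\Delta_I$ is right exact), that $\boL_0\Lambda_I(P_0)=\Lambda_I(P_0)$ ($P_0$ is flat) with $\Lambda_I(P_0)\rarrow\boL_0\Lambda_I(C)$ the cokernel projection, and that $\Delta_I(P_0)\rarrow\Lambda_I(P_0)$ is onto (established above), one concludes that $\Delta_I(C)\rarrow\boL_0\Lambda_I(C)$ is onto; inspecting the presentation also shows the composite $\Delta_I(C)\rarrow\boL_0\Lambda_I(C)\rarrow\Lambda_I(C)$ is the canonical morphism. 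Finally, for~(d): by~(c) the functor $\boL_0\Lambda_I$ is left adjoint to the fully faithful inclusion $R\modl_{I\ctra}^\qs\rarrow R\modl$, so the counit of the adjunction is an isomorphism, whence $\boL_0\Lambda_I\:R\modl\rarrow R\modl$ is idempotent with essential image the reflective subcategory $R\modl_{I\ctra}^\qs$, exactly as for $\Delta_I$.
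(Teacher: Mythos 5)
Your proof is correct, and all the essential ingredients coincide with those of the paper's argument: the presentation $\boL_0\Lambda_I(C)=\operatorname{coker}(\Lambda_I(P_1)\to\Lambda_I(P_0))$, the surjectivity of $\Delta_I(F)\to\Lambda_I(F)$, the use of quotseparatedness to produce a surjection onto $D$ from a separated complete module $E$, a projectivity lifting, and the reflector property of $\Lambda_I$ onto $R\modl_{I\ctra}^\sep$. The organization, however, is genuinely different. The paper proves part~(a) first, by fitting $b_{I,C}\:\Delta_I(C)\to\Lambda_I(C)$ into a three-row diagram over the presentation $P_1\to P_0\to C\to 0$, and then deduces~(c) by showing that any morphism $\Delta_I(C)\to K$ into a quotseparated contramodule factors through the epimorphism $\Delta_I(C)\to\boL_0\Lambda_I(C)$; the crucial lifting there uses that $\Delta_I(P_0)$ is a \emph{projective object of} $R\modl_{I\ctra}$ (because $\Delta_I$, being left adjoint to an exact inclusion, preserves projectives). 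You instead prove~(c) directly as an isomorphism of $\Hom$ groups, reducing by a kernel comparison to the key lemma that $\Hom_R(\Lambda_I(F),D)\to\Hom_R(F,D)$ is bijective for $F$ free and $D$ quotseparated, and your lifting step uses only ordinary projectivity of $F$ in $R\modl$ --- a marginally more elementary input. The price is that you must also prove \emph{injectivity} of that restriction map, which you correctly obtain from the surjectivity of $\Delta_I(F)\to\Lambda_I(F)$; the paper simply cites~\cite[Lemma~7.5]{Pcta} for this surjectivity, while your contramodule--Nakayama argument is a valid substitute (its density step implicitly uses that $\Lambda_I(F)/I\Lambda_I(F)\simeq F/IF$ for a finitely generated ideal~$I$, which is standard but worth flagging). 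Your derivation of~(a) from~(c) and the naturality square at $P_0\to C$, and of~(d) from~(c), then match the paper's conclusions.
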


\begin{proof}
 Part~(a): a natural surjective $R$\+module morphism $b_{I,C}\:
\Delta_I(C)\rarrow\Lambda_I(C)$ is constructed
in~\cite[Lemma~7.5]{Pcta}, and its kernel is also computed
in~\cite[Lemma~7.5]{Pcta}.
 Let us show that the map~$b_{I,C}$ factorizes naturally as
the composition of two surjective morphisms $\Delta_I(C)\rarrow
\boL_0\Lambda_I(C)\rarrow\Lambda_I(C)$.

 Let $P_1\rarrow P_0\rarrow C\rarrow0$ be a right exact sequence of
$R$\+modules with projective $R$\+modules $P_1$ and~$P_0$.
 By the definition, $\boL_0\Lambda_I(C)$ is the cokernel of
the induced morphism $\Lambda_I(P_1)\rarrow\Lambda_I(P_0)$.
 We will construct a commutative diagram of $R$\+module morphisms
of the following form:
\begin{equation} \label{delta-lambda-diagram}
\begin{gathered}
 \xymatrix{
  \Delta_I(P_1)\ar@{->>}[d]^{b_{I,P_1}} \ar[r] & \Delta_I(P_0)
  \ar@{->>}[d]^{b_{I,P_0}} \ar[r] & \Delta_I(C) \ar[r]\ar@{.>>}[d]
  & 0 \\
  \Lambda_I(P_1)\ar@{=}[d]\ar[r] & \Lambda_I(P_0) \ar@{=}[d]
  \ar[r] & \boL_0\Lambda_I(C) \ar[r]\ar@{.>>}[d] & 0 \\
  \Lambda_I(P_1)\ar[r] & \Lambda_I(P_0) \ar@{->>}[r] & 
  \Lambda_I(C)
 }
\end{gathered}
\end{equation}

 The uppper and lower rows are obtained by applying the functors
$\Delta_I$ and $\Lambda_I$, respectively, to the right exact
sequence $P_0\rarrow P_1\rarrow C\rarrow0$.
 The upper leftmost square is obtained by applying the natural
transformation~$b_I$ to the $R$\+module morphism $P_1\rarrow P_0$.
 The upper and middle rows are right exact sequences (as
the functor~$\Delta_I$ is right exact by the above discussion).
 The upper dotted vertical arrow is uniquely defined by
the condition of commutativity of the upper rightmost square.
 The morphism $\Delta_I(C)\rarrow\boL_0\Lambda_I(C)$ is surjective
because the morphism~$b_{I,P_0}$ is.

 The lower row is not exact in the middle, generally speaking;
but the morphism $\Lambda_I(P_0)\rarrow\Lambda_I(C)$ is still
an epimorphism, because the functor $\Lambda_I$ takes epimorphisms
to epimorphisms~\cite[Proposition~1.2]{Yek0}.
 The lower dotted vertical arrow is uniquely defined by
the condition of commutativity of the lower rightmost square.
 It follows that the morphism $\boL_0\Lambda_I(C)\rarrow
\Lambda_I(C)$ is surjective.

 Finally, the composition $\Delta_I(C)\rarrow\boL_0\Lambda_I(C)
\rarrow\Lambda_I(C)$ is equal to the morphism~$b_{I,C}$, since
both of them make the square diagram $\Delta_I(P_0)\rarrow
\Delta_I(C)\rarrow\Lambda_I(C)$, \ $\Delta_I(P_0)\rarrow\Lambda_I(P_0)
\rarrow\Lambda_I(C)$ commutative.

 Part~(b): $\boL_0\Lambda_I(C)$ is the cokernel of a morphism of
$I$\+adically separated and complete $R$\+modules $\Lambda_I(P_1)
\rarrow\Lambda_I(P_0)$.
 Any such a cokernel is a quotseparated $I$\+contramodule $R$\+module.
 
 Part~(c): in order to establish the adjunction, it remains to show
that, for any quotseparated $I$\+contramodule $R$\+module $K$ and
any $R$\+module morphism $C\rarrow K$, the induced morphism of
$I$\+contramodule $R$\+modules $\Delta_I(C)\rarrow K$ factorizes through
the epimorphism $\Delta_I(C)\rarrow\boL_0\Lambda_I(C)$.
 The upper rightmost square in the diagram~\eqref{delta-lambda-diagram}
is cocartesian, because the map~$b_{I,P_1}$ is surjective.
 Therefore, it suffices to check that the composition $\Delta_I(P_0)
\rarrow\Delta_I(C)\rarrow K$ factorizes through the epimorphism
$\Delta_I(P_0)\rarrow\Lambda_I(P_0)$.

 Let $L\rarrow K$ be a surjective $R$\+module morphism onto $K$ from
an $I$\+adically separated and complete $R$\+module~$L$.
 The object $\Delta_I(P_0)$ is projective in the abelian category
$R\modl_{I\ctra}$, because the functor $\Delta$, being left adjoint to
the exact inclusion functor, takes projectives to projectives.
 Both the $R$\+modules $L$ and $K$ are $I$\+contramodules; consequently,
the $R$\+module morphism $\Delta_I(P_0)\rarrow K$ factorizes through
the epimorphism $L\rarrow K$:
$$
 \xymatrix{
  \Delta_I(P_0) \ar@{..>}[d]\ar@{->>}[r] & \Delta_I(C) \ar[d] \\
  L \ar@{->>}[r] & K
 }
$$
 Finally, since $L\in R\modl_{I\ctra}^\sep$ and $\Lambda_I$ is
the reflector onto $R\modl_{I\ctra}^\sep$, any $R$\+module morphism
$\Delta_I(P)\rarrow L$ factorizes through the epimorphism
$b_{I,P}\:\Delta_I(P)\rarrow\Lambda_I(\Delta_I(P))=\Lambda_I(P)$
(for any $R$\+module~$P$).
 Simply put, $L$ is $I$\+adically separated and $\Lambda_I(P)$ is
the maximal $I$\+adically separated quotient $R$\+module of
$\Delta_I(P)$.

 Part~(d) follows from part~(c).
\end{proof}

 For any set $X$, let us denote by $R[X]=R^{(X)}$ the free $R$\+module
with generators indexed by~$X$.
 Then the $R$\+module $\Delta_I(R[X])$ is called the \emph{free
$I$\+contramodule $R$\+module with $X$ generators}, while
$\boL_0\Lambda_I(R[X])=\Lambda_I(R[X])$ is the \emph{free quotseparated
$I$\+contramodule $R$\+module with $X$ generators}.

 The $R$\+module $\Lambda_I(R[X])$ is what Yekutieli calls
the $R$\+module of \emph{decaying functions} $X\rarrow\Lambda_I(R)$
\cite[Section~2]{Yek0} (see also the much
earlier~\cite[Section~II.2.4.2]{RG}).
 The notation in~\cite{Yek0} is $\mathrm{F}_{\mathrm{fin}}(X,R)=R[X]$
(for the finitely supported functions $X\rarrow R$) and
$\mathrm{F}_{\mathrm{dec}}(X,\Lambda_I(R))=\Lambda_I(R[X])$
(for the decaying functions $X\rarrow\Lambda_I(R)$).

 There are enough projective objects in both the abelian categories
$R\modl_{I\ctra}$ and $R\modl_{I\ctra}^\qs$.
 The projective objects of the category $R\modl_{I\ctra}$ are precisely
the direct summands of the $R$\+modules $\Delta_I(R[X])$.
 The projective objects of the category $R\modl_{I\ctra}^\qs$ are
precisely the direct summands of the $R$\+modules $\Lambda_I(R[X])$.

\begin{lem} \label{when-projective-quotseparated}
\textup{(a)} Let $Q$ be a projective object of the abelian category
$R\modl_{I\ctra}$ (i.~e., the $R$\+module $Q$ is a direct summand of
the $R$\+module $\Delta_I(R[X])$ for some set~$X$).
 Then $Q$ is a quotseparated $I$\+contramodule $R$\+module if and only
if it is an $I$\+adically separated (and complete) $R$\+module. \par
\textup{(b)} For any fixed set $X$, the $R$\+module $\Delta_I(R[X])$
is quotseparated if and only if the natural map $b_{I,R[X]}\:
\Delta_I(R[X])\rarrow\Lambda_I(R[X])$ is an isomorphism.
\end{lem}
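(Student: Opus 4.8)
I would prove (a) directly by a splitting argument and then deduce (b) from (a) together with the known description of the canonical surjection $b_{I,R[X]}$. For part (a), the ``if'' direction is immediate: an $I$-adically separated and complete $R$-module lies in $R\modl_{I\ctra}^\sep\subset R\modl_{I\ctra}^\qs$, hence is quotseparated. For the converse, let $Q$ be a projective object of $R\modl_{I\ctra}$ that is quotseparated, and pick a surjective $R$-module morphism $L\rarrow Q$ with $L$ an $I$-adically separated and complete $R$-module. Such an $L$ is itself an $I$-contramodule, so $L\rarrow Q$ is a morphism in $R\modl_{I\ctra}$, and it is an epimorphism there since the inclusion $R\modl_{I\ctra}\rarrow R\modl$ is exact (Lemma~\ref{I-contramodule-R-modules-lemma}) and the morphism is surjective in $R\modl$. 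Projectivity of $Q$ splits this epimorphism, so $Q$ is a direct summand---in particular, isomorphic to an $R$-submodule---of $L$. Then the basic observation recalled in the proof of the preceding lemma applies: a submodule of an $I$-adically separated $R$-module is $I$-adically separated, so $Q$ is $I$-adically separated; and $Q$ is $I$-adically complete because every $I$-contramodule is, by~\cite[Theorem~5.6]{Pcta}.

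For part (b), I would first note that $\Delta_I(R[X])$ is a projective object of $R\modl_{I\ctra}$ (as recalled just before the lemma), so part (a) applies to it: $\Delta_I(R[X])$ is quotseparated if and only if it is $I$-adically separated, completeness being automatic since $\Delta_I(R[X])$ is an $I$-contramodule. It then remains to identify $I$-adic separatedness of $\Delta_I(R[X])$ with $b_{I,R[X]}$ being an isomorphism. For this I would use that $b_{I,R[X]}\:\Delta_I(R[X])\rarrow\Lambda_I(R[X])$ is surjective and that $\Lambda_I(R[X])$ is the maximal $I$-adically separated quotient $R$-module of $\Delta_I(R[X])$---both from~\cite[Lemma~7.5]{Pcta}, and the latter reused in the proof of Proposition~\ref{quotseparated-reflector}---so that the kernel of $b_{I,R[X]}$ is $\bigcap_{n\ge1}I^n\Delta_I(R[X])$. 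Hence $b_{I,R[X]}$ is injective, and thus an isomorphism, exactly when $\bigcap_{n\ge1}I^n\Delta_I(R[X])=0$, i.~e., exactly when $\Delta_I(R[X])$ is $I$-adically separated. Chaining the two equivalences gives the statement.

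I do not expect a real obstacle here: the substance of (a) is the short splitting argument, and (b) is then formal. The one point to handle with care in (b) is that the morphism whose kernel equals $\bigcap_{n\ge1}I^n\Delta_I(R[X])$ is precisely the map $b_{I,R[X]}$ named in the statement, rather than merely some canonical quotient of $\Delta_I(R[X])$ onto its maximal $I$-adically separated quotient. This is exactly what the computation of $\ker b_{I,C}$ in~\cite[Lemma~7.5]{Pcta} provides; it is also implicit in the last paragraph of the proof of Proposition~\ref{quotseparated-reflector}, where $b_{I,P}$ is identified with the quotient map of $\Delta_I(P)$ onto its maximal $I$-adically separated quotient.
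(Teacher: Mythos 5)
Your proposal is correct and follows essentially the same route as the paper: part~(a) is the identical splitting argument (a quotseparated projective object of $R\modl_{I\ctra}$ is a direct summand, hence a submodule, of an $I$\+adically separated and complete module), and part~(b) is the same identification of $b_{I,R[X]}$ with the $I$\+adic completion map of $\Delta_I(R[X])$, which the paper phrases via $\Lambda_I(\Delta_I(P))=\Lambda_I(P)$ and you phrase via $\ker b_{I,R[X]}=\bigcap_{n\ge1}I^n\Delta_I(R[X])$. No gaps.
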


\begin{proof}
 Part~(a): suppose $Q$ is quotseparated; then it is a quotient module
of some $I$\+adically separated and complete $R$\+module~$C$.
 Since $C\rarrow Q$ is an epimorphism in the abelian category
$R\modl_{I\ctra}$ and $Q$ is a projective object in $R\modl_{I\ctra}$,
it follows that $Q$ is a direct summand of~$C$.
 Hence $Q$ is $I$\+adically separated.
 Part~(b): since $\Lambda_I(\Delta_I(P))=\Lambda_I(P)$ for any
$R$\+module $P$, the map $b_{I,P}\:\Delta_I(P)\rarrow\Lambda_I(P)$
is an isomorphism if and only if the $R$\+module $\Delta_I(P)$ is
$I$\+adically separated.
\end{proof}

 The $I$\+adic completion $\R=\Lambda_I(R)=\varprojlim_{n\ge1}R/I^n$
of the ring $R$ is a topological ring in the topology of projective
limit of discrete rings $R/I^n$ (which coincides with the $I$\+adic
topology of the $R$\+module~$\R$).
 Following the memoir~\cite[Section~1.2]{Pweak},
the paper~\cite[Sections~1.1\+-1.2 and~5]{PR}, or
the paper~\cite[Section~6]{PS1}, one can assign to a topological ring
$\R$ the abelian category of \emph{$\R$\+contramodules} $\R\contra$.
 These are modules with infinite summation operations with families
of coefficients converging to zero in~$\R$.

\begin{prop} \label{top-ring-contramodules}
 The forgetful functor\/ $\R\contra\rarrow R\modl$ (induced by
the canonical ring homomorphism $R\rarrow\R$) is fully faithful,
and its essential image is the full subcategory of quotseparated
$I$\+contramodule $R$\+modules $R\modl_{I\ctra}^\qs$.
 So we have an equivalence of abelian categories\/ $\R\contra\simeq
R\modl_{I\ctra}^\qs$.
\end{prop}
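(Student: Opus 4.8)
The plan is to work from the standard description of the category $\R\contra$ recalled from~\cite{Pweak,PR,PS1}: it is abelian with enough projectives, the projective objects being the direct summands of the \emph{free} $\R$\+contramodules $\R[[X]]$; the forgetful functor $\Phi\:\R\contra\rarrow R\modl$ induced by $R\rarrow\R$ is exact and faithful; and $\Hom_{\R\contra}(\R[[X]],C)$ is naturally the set of functions from $X$ to the underlying set of~$C$, for any set $X$ and any $C\in\R\contra$. The one point where the completion and its topology actually enter is the identification of underlying $R$\+modules: since the topology of $\R=\varprojlim_{n\ge1}R/I^n$ has the ideals $U_n=\ker(\R\rarrow R/I^n)$ as a base of neighbourhoods of zero, with $\R/U_n=R/I^n$, the $R$\+module underlying the zero\+convergent $X$\+indexed families $\R[[X]]$ is $\varprojlim_{n\ge1}(R/I^n)^{(X)}=\Lambda_I(R[X])$, naturally in $X$ and compatibly with the generating maps $X\rarrow\R[[X]]$ and $X\rarrow R[X]\rarrow\Lambda_I(R[X])$. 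Thus $\Phi(\R[[X]])=\Lambda_I(R[X])=\boL_0\Lambda_I(R[X])$, which lies in $R\modl_{I\ctra}^\sep\subset R\modl_{I\ctra}^\qs$. More generally, writing an arbitrary $C\in\R\contra$ as the cokernel of a morphism $\R[[Y]]\rarrow\R[[X]]$ of free contramodules and applying the exact functor $\Phi$, we see that $\Phi(C)$ is the cokernel in $R\modl$ of a morphism between $I$\+adically separated and complete $R$\+modules, hence a quotseparated $I$\+contramodule by Proposition~\ref{quotseparated-reflector}(b) (and Lemma~\ref{I-contramodule-R-modules-lemma}). So $\Phi$ factors through $R\modl_{I\ctra}^\qs$.

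The core is full faithfulness, which I would deduce purely by matching two universal properties. On the $R\modl_{I\ctra}^\qs$ side, combining the adjunction $\boL_0\Lambda_I\dashv(R\modl_{I\ctra}^\qs\hookrightarrow R\modl)$ of Proposition~\ref{quotseparated-reflector}(c) with the freeness of $R[X]$ over $R$ gives $\Hom_R(\Lambda_I(R[X]),M)=\Hom_R(R[X],M)=\Hom_{\mathsf{Set}}(X,M)$ (functions to the underlying set of $M$) for every $M\in R\modl_{I\ctra}^\qs$; that is, $X\longmapsto\Lambda_I(R[X])$ is left adjoint to the forgetful functor $R\modl_{I\ctra}^\qs\rarrow\mathsf{Set}$, exactly as $X\longmapsto\R[[X]]$ is left adjoint to the forgetful functor $\R\contra\rarrow\mathsf{Set}$. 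Since $\Phi$ intertwines these two forgetful functors (it changes the algebraic structure but not the underlying set) and carries $\R[[X]]$ to $\Lambda_I(R[X])$ compatibly with the generating maps, the natural map $\Hom_{\R\contra}(\R[[X]],C)\rarrow\Hom_R(\Phi\R[[X]],\Phi C)$ is a bijection for every set $X$ and every $C\in\R\contra$. Now for arbitrary $C,D\in\R\contra$, choose a presentation $\R[[Y]]\rarrow\R[[X]]\rarrow C\rarrow0$, apply the left exact functors $\Hom_{\R\contra}(-,D)$ and $\Hom_R(\Phi(-),\Phi D)$ (the latter to the $\Phi$\+image of the presentation, which is a right exact sequence because $\Phi$ is exact), and compare: the two resulting presentations of $\Hom_{\R\contra}(C,D)$ and of $\Hom_R(\Phi C,\Phi D)$ as kernels of the same map of sets show that $\Hom_{\R\contra}(C,D)\rarrow\Hom_R(\Phi C,\Phi D)$ is bijective.

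It remains to check that the essential image of $\Phi$ is all of $R\modl_{I\ctra}^\qs$. One inclusion is the factorization obtained above. Conversely, let $M\in R\modl_{I\ctra}^\qs$. Since the free quotseparated $I$\+contramodules $\Lambda_I(R[X])$ are projective generators of $R\modl_{I\ctra}^\qs$ (as recalled around Lemma~\ref{when-projective-quotseparated}), there is a presentation $\Lambda_I(R[Y])\overset{g}\rarrow\Lambda_I(R[X])\rarrow M\rarrow0$. By fullness of $\Phi$ on free contramodules, $g$ lifts to a morphism $\widetilde g\:\R[[Y]]\rarrow\R[[X]]$ in $\R\contra$; put $C=\operatorname{coker}(\widetilde g)$ in $\R\contra$. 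As $\Phi$ is (right) exact, $\Phi(C)\cong\operatorname{coker}(g)\cong M$ in $R\modl$. Hence $\Phi$ is essentially surjective onto $R\modl_{I\ctra}^\qs$, and, being also fully faithful, it is an equivalence of abelian categories $\R\contra\simeq R\modl_{I\ctra}^\qs$.

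The step I expect to demand the most care is not any of these formal manipulations but the foundational inputs they rest on. First, the exactness of the forgetful functor $\R\contra\rarrow R\modl$, the availability of enough free contramodules, and their universal property: these belong to the construction of $\R\contra$ in~\cite{Pweak,PR,PS1} and must be quoted precisely rather than reproved. Second, and more substantively, the naturality of the identification $\Phi(\R[[X]])\cong\Lambda_I(R[X])$ in $X$ and — above all — its compatibility with the two free\+object universal properties: it is exactly this compatibility that makes the two universal properties transport into one another and drives the whole argument, and verifying it amounts to matching the definition of the contra\+action on $\R[[X]]$ against the inverse\+limit description of $\Lambda_I(R[X])$.
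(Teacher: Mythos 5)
Your proposal is correct and takes essentially the same route as the paper: the paper's proof consists of the observation (plus a citation to earlier work) that both $\R\contra$ and $R\modl_{I\ctra}^\qs$ are the categories of modules over the same monad $X\longmapsto\R[[X]]=\Lambda_I(R[X])$ on the category of sets, and your argument via matching the free-object adjunctions over $\mathsf{Set}$ and comparing free presentations is precisely a detailed unpacking of that monad identification.
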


\begin{proof}
 This is a straightforward generalization of~\cite[Theorem~5.20]{PSl},
based on the discussion in~\cite[Example~3.6\,(3)]{Pper}.
 One can also observe that the forgetful functor $\R\contra\rarrow
R\modl$ takes the free $\R$\+contramodules $\R[[X]]$ to the free
quotseparated $I$\+contramodule $R$\+modules $\Lambda_I(R[X])$.
 Simply put, by the definition of an $\R$\+contramodule and in view of
Proposition~\ref{quotseparated-reflector}, both the abelian categories
$\R\contra$ and $R\modl_{I\ctra}^\qs$ are equivalent to the category of
modules over the same monad $X\longmapsto\R[[X]]=\Lambda_I(R[X])$ on
the category of sets.
\end{proof}

 In particular, it follows from Proposition~\ref{top-ring-contramodules}
that every quotseparated $I$\+contra\-module $R$\+module has a natural,
functorially defined $\R$\+module structure (extending the $R$\+module
structure).
 Analogously, one can define a commutative $R$\+algebra structure on
the $R$\+module $\Delta_I(R)$ and show that every $I$\+contramodule
$R$\+module has a functorially defined $\Delta_I(R)$\+module structure
(extending its $R$\+module structure) \cite[Example~5.2\,(3)]{Pper}.

 Notice that the full subcategory of quotseparated $I$\+contramodule
$R$\+modules does \emph{not} need to be closed under extensions in
$R\modl_{I\ctra}$ or in $R\modl$.
 In fact, the opposite is true.
 
\begin{prop} \label{extension-of-two}
 Every $I$\+contramodule $R$\+module is an extension of two
quotseparated $I$\+contramodule $R$\+modules.
\end{prop}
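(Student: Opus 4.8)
The plan is to split $C$ along its maximal $I$\+adically separated quotient. Given an $I$\+contramodule $R$\+module $C$, set $T=\bigcap_{n\ge1}I^nC\subseteq C$. First I would observe that the short exact sequence of $R$\+modules
$$
 0\rarrow T\rarrow C\rarrow C/T\rarrow0
$$
has all three terms in $R\modl_{I\ctra}$ --- the submodule $T$ and the quotient module $C/T$ of the $I$\+contramodule $C$ being $I$\+contramodules by Lemma~\ref{I-contramodule-R-modules-lemma} --- hence it is exact in $R\modl_{I\ctra}$ as well, and its quotient term $C/T$ is quotseparated. Indeed, since $\bigcap_{n}I^nC\subseteq I^mC$ for every $m\ge1$, the $R$\+module $C/T$ is $I$\+adically separated; and it is $I$\+adically complete, being an $I$\+contramodule by~\cite[Theorem~5.6]{Pcta}. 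So $C/T\in R\modl_{I\ctra}^\sep\subseteq R\modl_{I\ctra}^\qs$, and it only remains to prove that $T$ itself is a quotseparated $I$\+contramodule.

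The next step is to identify $T$ with the kernel of the comparison between the two completions and to compute that kernel. Since $C$ is an $I$\+contramodule, the reflection morphism $C\rarrow\Delta_I(C)$ is an isomorphism, so under the identification $\Delta_I(C)=C$ the natural surjection $b_{I,C}\:\Delta_I(C)\rarrow\Lambda_I(C)$ of~\cite[Lemma~7.5]{Pcta} becomes the $I$\+adic completion morphism $C\rarrow\Lambda_I(C)=\varprojlim_nC/I^nC$; as $C$ is $I$\+adically complete, this morphism is surjective with kernel $\bigcap_nI^nC=T$, that is, $T=\ker(b_{I,C})$. Now I would invoke the computation of $\ker(b_{I,C})$ in~\cite[Lemma~7.5]{Pcta}, which presents this kernel as the derived projective limit $\varprojlim\nolimits^1_{n\ge1}$ of a countable projective system of $R$\+modules each of which is annihilated by a power of the ideal~$I$ (for a principal ideal $I=(s)$ these are the $s^n$\+torsion submodules ${}_{s^n}C\subseteq C$; for a general finitely generated ideal, the higher Koszul homology $R$\+modules of $C$ with respect to $s_1^n,\dotsc,s_m^n$, which are annihilated by the ideal $(s_1^n,\dotsc,s_m^n)\supseteq I^{mn}$). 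An $R$\+module $M$ with $I^NM=0$ is both $I$\+adically separated (as $\bigcap_kI^kM\subseteq I^NM=0$) and $I$\+adically complete (as $M/I^kM=M$ for $k\ge N$), hence lies in $R\modl_{I\ctra}^\sep$; since $R\modl_{I\ctra}^\sep$ is closed under infinite products in $R\modl$ (the classes of $I$\+adically separated and of $I$\+adically complete $R$\+modules each being closed under products), the product of all the terms of our system again lies in $R\modl_{I\ctra}^\sep$. Finally, $\varprojlim\nolimits^1$ of a countable projective system is, by construction, a cokernel of an $R$\+module endomorphism of the product of its terms; thus $T\cong\ker(b_{I,C})$ is a quotient $R$\+module of an object of $R\modl_{I\ctra}^\sep$, i.e., a quotseparated $I$\+contramodule. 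Together with the first paragraph, this proves the proposition.

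The formal skeleton here --- the reduction to the assertion ``$\ker(b_{I,C})$ is quotseparated'' via the canonical sequence $0\to T\to C\to C/T\to0$, and the deduction of quotseparatedness from a $\varprojlim\nolimits^1$ presentation --- is routine, resting only on Lemma~\ref{I-contramodule-R-modules-lemma}, \cite[Theorem~5.6]{Pcta}, and the stability of $I$\+adically separated and complete $R$\+modules under products. The substance is concentrated in the middle step: extracting from~\cite[Lemma~7.5]{Pcta} an expression for $\ker(b_{I,C})$ as a \emph{single} $\varprojlim\nolimits^1$ of $R$\+modules annihilated by powers of~$I$ (equivalently, as one cokernel of a morphism of $I$\+adically separated and complete $R$\+modules) --- note that $R\modl_{I\ctra}^\qs$ is precisely \emph{not} closed under extensions, so a presentation of this kernel as an iterated extension of several such terms would not suffice. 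Should that computation come packaged differently, an alternative is to pass first to a free $I$\+contramodule: choose a projective presentation $\Delta_I(R[X])\twoheadrightarrow C$ in $R\modl_{I\ctra}$ with kernel $N$, and push the exact sequence $0\to\ker(b_{I,R[X]})\to\Delta_I(R[X])\to\Lambda_I(R[X])\to0$ forward along $\Delta_I(R[X])\twoheadrightarrow C$; a diagram chase exhibits $C$ as an extension of the quotient $\Lambda_I(R[X])/b_{I,R[X]}(N)$ of the separated complete module $\Lambda_I(R[X])$ by a quotient of $\ker(b_{I,R[X]})$, which reduces the problem to the very same kernel computation for a free $I$\+contramodule. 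Either way, controlling $\ker(b_{I,-})$ is where the real work lies.
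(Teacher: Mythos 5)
Your proof is correct and follows essentially the same route as the paper's: both decompose $C\cong\Delta_I(C)$ along the surjection $b_{I,C}\:\Delta_I(C)\to\Lambda_I(C)$ (your $0\to T\to C\to C/T\to 0$ is exactly this sequence), with the quotient $\Lambda_I(C)$ separated and complete, and the kernel exhibited via~\cite[Lemma~7.5]{Pcta} as a $\varprojlim^1$, i.e.\ a cokernel of a map of $I$\+adically separated and complete modules, hence quotseparated. (Only a cosmetic point: $T$ is an $I$\+contramodule not because it is a \emph{submodule} of one, but because it is the \emph{kernel} of the morphism $C\to\Lambda_I(C)$ of $I$\+contramodules --- which your second paragraph supplies anyway.)
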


\begin{proof}
 This is explained in~\cite[Example~5.2\,(6)]{Pper}.
 The argument is based on the computation of the kernel of
the natural surjective morphism $b_{I,C}\:\Delta_I(C)\rarrow
\Lambda_I(C)$ in~\cite[Lemma~7.5]{Pcta}.
 Basically, any $I$\+contramodule $R$\+module $C$ is naturally
isomorphic to the $R$\+module $\Delta_I(C)$; and for any $R$\+module
$C$, the $R$\+module $\Lambda_I(C)$ is $I$\+adically separated and
complete, while the kernel of the map~$b_{I,C}$ can be described as
the cokernel of a natural map of $I$\+adically separated and complete
$R$\+modules.
\end{proof}

\begin{lem} \label{when-all-quotseparated}
 The full subcategories $R\modl_{I\ctra}^\qs$ and $R\modl_{I\ctra}$
coincide in $R\modl$ (i.~e., in other words, every $I$\+contramodule
$R$\+module is quotseparated) if and only if the map
$b_{I,R[X]}\:\Delta_I(R[X])\rarrow\Lambda_I(R[X])$ is
an isomorphism for every set~$X$.
 It suffices to check the latter condition for the countable set
$X=\omega$.
\end{lem}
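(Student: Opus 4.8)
The plan is to settle the "if and only if" by a formal argument and then reduce the condition over all sets $X$ to the single case $X=\omega$ by means of a countable‑support property of the kernel of~$b_{I,R[X]}$.

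For the "only if" direction: if every $I$\+contramodule $R$\+module is quotseparated, then in particular the free $I$\+contramodule $\Delta_I(R[X])$ is quotseparated for each set $X$, and by Lemma~\ref{when-projective-quotseparated}(b) this holds precisely when $b_{I,R[X]}$ is an isomorphism. For the "if" direction: if $b_{I,R[X]}$ is an isomorphism for every $X$, then each $\Delta_I(R[X])$ is isomorphic to $\Lambda_I(R[X])$, hence $I$\+adically separated and complete; and every $I$\+contramodule $R$\+module is, as an $R$\+module, a quotient of some $\Delta_I(R[X])$ (the abelian category $R\modl_{I\ctra}$ having enough projectives, which are direct summands of such free $I$\+contramodules), so it is quotseparated by definition. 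This establishes the equivalence asserted in the lemma, and shows in passing that $R\modl_{I\ctra}^\qs=R\modl_{I\ctra}$ is itself equivalent to "$b_{I,R[X]}$ is an isomorphism for all~$X$".

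To reduce to $X=\omega$, I would first note that $b_{I,R[X]}$ is always surjective: by Proposition~\ref{quotseparated-reflector}(a) it factors as $\Delta_I(R[X])\twoheadrightarrow\boL_0\Lambda_I(R[X])\twoheadrightarrow\Lambda_I(R[X])$, and $\boL_0\Lambda_I(R[X])=\Lambda_I(R[X])$ since $R[X]$ is projective; hence $b_{I,R[X]}$ is an isomorphism if and only if its kernel $K_X:=\ker(b_{I,R[X]})$ is zero. Now any injection of sets $X'\rarrow X$ gives a split monomorphism $R[X']\rarrow R[X]$, hence — as $\Delta_I$ is additive — a split monomorphism $\Delta_I(R[X'])\rarrow\Delta_I(R[X])$ compatible with the surjections onto $\Lambda_I(R[X'])$ and $\Lambda_I(R[X])$, and therefore a split monomorphism $K_{X'}\rarrow K_X$. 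Consequently, if $b_{I,R[\omega]}$ is an isomorphism then $K_\omega=0$, and so $K_{X_0}=0$ for every countable set $X_0$ (embedding $X_0$ into $\omega$).

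The whole argument then comes down to the claim that $K_X=\bigcup_{X_0}K_{X_0}$, the union over all countable subsets $X_0\subseteq X$ — equivalently, that the functor $X\mapsto K_X$ on sets preserves $\omega_1$\+filtered colimits. This I would deduce from the explicit computation of $\ker(b_{I,C})$ in~\cite[Lemma~7.5]{Pcta} (the description already used in the proof of Proposition~\ref{extension-of-two}): for $C=R[X]$ this kernel is the cokernel of a natural morphism between $R$\+modules assembled functorially from the submodules of $R$ annihilated by powers of the generators~$s_j$ and from direct sums indexed by $X$; in the principal case $I=(s)$ one has simply $K_X\cong{\varprojlim}^1_{n\ge1}\,({}_{s^n}R)^{(X)}$, the derived inverse limit of the system with multiplication‑by‑$s$ transition maps. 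Since each $({}_{s^n}R)^{(X)}$ is a direct sum over $X$ of a fixed submodule of $R$, every one of its elements has finite support in $X$; hence every element of the countable product $\prod_{n\ge1}({}_{s^n}R)^{(X)}$ has countable support in $X$, the map computing $K_X$ respects this filtration by countable subsets, and therefore so does its cokernel $K_X$. The one genuinely non‑formal point is precisely this countable‑support property of $K_X$, which is where one must actually unwind the construction of~$\Delta_I$; everything else is bookkeeping with split monomorphisms and direct summands.
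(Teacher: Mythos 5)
Your argument is correct, and its overall shape matches the paper's: the equivalence itself is the tautology you describe (enough projectives in $R\modl_{I\ctra}$, every projective being a direct summand of some $\Delta_I(R[X])$, plus Lemma~\ref{when-projective-quotseparated}(b)), and the reduction to $X=\omega$ is an $\aleph_1$\+filtered colimit argument over the countable subsets of~$X$. Where you genuinely differ is in how that reduction is effected. The paper gets it almost for free by observing that both functors $\Delta_I$ and $\Lambda_I$ preserve countably-filtered direct limits, so that $\Delta_I(R[X])=\varinjlim_{Z\subset X}\Delta_I(R[Z])$ and $\Lambda_I(R[X])=\varinjlim_{Z\subset X}\Lambda_I(R[Z])$ over countable $Z\subset X$; hence $b_{I,R[X]}$ is a filtered colimit of isomorphisms, and no explicit formula for the kernel is needed. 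You instead isolate $K_X=\ker(b_{I,R[X]})$ and verify the countable-support property by hand. That route works, but note that your description of the kernel for a general finitely generated ideal is only gestured at; the precise statement, read off from the exact sequence~\eqref{derived-projective-limit-sequence} at $q=0$ together with~\eqref{delta-computed} and~\eqref{lambda-computed}, is $K_X\simeq\varprojlim^1_{n\ge1}H^{-1}(K(R;\s^n)\ot_R R[X])\simeq\varprojlim^1_{n\ge1}H^{-1}(K(R;\s^n))^{(X)}$, specializing to $\varprojlim^1_{n\ge1}({}_{s^n}R)^{(X)}$ when $m=1$. This is still a $\varprojlim^1$ of $X$\+indexed direct sums of fixed modules along fixed transition maps, so your countable-support argument (every element of a countable product of $X$\+indexed direct sums is supported in a countable subset, and cokernels commute with the resulting filtered union) does go through verbatim. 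The trade-off is that your version is more concrete and self-contained modulo~\cite[Lemma~7.5]{Pcta}, while the paper's is shorter and never unwinds the construction of~$\Delta_I$ at all.
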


\begin{proof}
 The first assertion is essentially a tautology;
see~\cite[Proposition~2.1]{Pper} for the details.
 The second assertion holds because both the functors $\Delta_I$
and $\Lambda_I$ preserve countably-filtered direct limits;
so, for any infinite set $X$, one has
$\Delta_I(R[X])=\varinjlim_{Z\subset X}\Delta_I(R[Z])$ and
$\Lambda_I(R[X])=\varinjlim_{Z\subset X}\Lambda_I(R[Z])$,
where $Z$ ranges over all the countably infinite subsets of~$X$.
\end{proof}

\begin{exs}
 Let us mention an explicit example of an $I$\+contramodule $R$\+module
which is not quotseparated.
 According to Lemma~\ref{when-projective-quotseparated}, for this
purpose it suffices to come up with an example of a commutative ring
$R$ with an ideal $I$ and a set $X$ such that the $R$\+module
$\Delta_I(R[X])$ is not $I$\+adically separated, or equivalently,
the natural morphism $b_{I,R[X]}\:\Delta_I(R[X])\rarrow\Lambda_I(R[X])$
is not an isomorphism.
 The construction of~\cite[Example~2.6]{Pmgm} produces, for any
field~$k$, a commutative $k$\+algebra $R$ with a principal ideal
$I=(s)$ for which the map $b_{I,R}\:\Delta_I(R)\rarrow\Lambda_I(R)$
is not an isomorphism.
 Hence $\Delta_I(R)$ is a nonquotseparated $I$\+contramodule
$R$\+module.

 Moreover, let $\R=\Lambda_I(R)$ be the $I$\+adic completion of
the same ring $R$, and let $\I=s\R=I\R=\Lambda_I(I)\subset\R$ be
the extension of the ideal $I\subset R$ in $\R$, or equivalently,
the $I$\+adic completion of the ideal~$I$.
 Then the $\R$\+module $\R$ is $\I$\+adically separated and complete;
so $\Delta_{\I}(\R)=\Lambda_{\I}(\R)=\R$.
 Still it follows from the discussion in Remark~\ref{main-remark}
below that, for any infinite set $X$, the map
$b_{\I,\R[X]}\:\Delta_{\I}(\R[X])\rarrow\Lambda_{\I}(\R[X])$ is not
an isomorphism (since all the $s$\+torsion in $\R$ is nondivisible,
and it is unbounded, as one can easily see).
 So $\Delta_{\I}(\R[X])$ is a nonquotseparated $\I$\+contramodule
$\R$\+module.

 The latter example also shows that the forgetful functor
$\R\contra\rarrow\R\modl_{\I\ctra}$ need not be an equivalence of
categories for a commutative ring $\R$ that is separated and
complete with respect to its finitely generated ideal~$\I$.
 In fact, Proposition~\ref{top-ring-contramodules} applied to
the ring $\R$ with the ideal $\I$ tells that $\R\contra\simeq
\R\modl_{\I\ctra}^\qs$; but $\R\modl_{\I\ctra}^\qs$ may still be
a proper full subcategory in $\R\modl_{\I\ctra}$.
 We refer to~\cite[Examples~5.2\,(7\+-8)]{Pper} for further
discussion.
\end{exs}

 Let us say a few words about category-theoretic properties of
the abelian categories of $I$\+contramodule $R$\+modules and
quotseparated $I$\+contramodule $R$\+modules.
 \emph{Neither} one of these abelian categories is Grothendieck.
 Indeed, let $I=(p)\subset\boZ=R$ be a maximal ideal in the ring of
integers, generated by a prime number~$p$; then the direct limit
of the sequence of monomorphisms $\boZ/p\boZ\rarrow\boZ/p^2\boZ
\rarrow\boZ/p^3\boZ\rarrow\dotsb$ vanishes in the abelian category
$\boZ_{p\ctra}=\boZ_{p\ctra}^\qs$.
 So does the direct limit of the sequence of monomorphisms
$\boZ_p\overset p\rarrow\boZ_p\overset p\rarrow\boZ_p\overset p\rarrow
\dotsb$ (where $\boZ_p$ denotes the abelian group of $p$\+adic
integers) \cite[Examples~4.4]{PR}.
 See~\cite{Neem} and the appendix to~\cite{Neem} for other examples of
similar behavior known in the literature.

 This also shows that contramodule categories have different nature
than the categories of ``condensed abelian groups'' or
``solid abelian groups''~\cite{SC}, where filtered direct limits
are exact~\cite[Theorems~2.2 and~5.8]{SC}.
 Infinite coproducts are exact in the category $\boZ_{p\ctra}$, because
it is a contramodule category of homological dimension~$1$
(see~\cite[Remark~1.2.1]{Pweak}), but \emph{not} in the abelian
categories $R\modl_{I\ctra}$ or $R\modl_{I\ctra}^\qs$ in general
(not even when the ring $R$ is Noetherian).

 On the other hand, both the abelian categories $R\modl_{I\ctra}$ and
$R\modl_{I\ctra}^\qs$ are always locally presentable.
 More precisely, they are locally $\aleph_1$\+presentable (see
book~\cite{AR} for the terminology).
 In the case of the category $R\modl_{I\ctra}$, this is explained
in~\cite[Example~4.1\,(3)]{PR} and~\cite[Examples~1.3\,(4)
and~2.2\,(1)]{Pper}.
 In the case of the category $R\modl_{I\ctra}^\qs$, one can use
Proposition~\ref{top-ring-contramodules} in order to reduce
the question to the general assertion about categories of
contramodules over topological rings~\cite[Sections~1.2 and~5]{PR},
\cite[Example~1.3\,(2)]{Pper}.

 Simply put, the abelian category $R\modl_{I\ctra}^\qs$ is the category
of modules over the monad $X\longmapsto\Lambda_I(R[X])$, and the abelian
category $R\modl_{I\ctra}$ is the category of modules over
the monad $X\longmapsto\Delta_I(R[X])$ on the category of sets.
 Both the abelian categories are locally $\aleph_1$\+presentable,
because both the functors $X\longmapsto\Lambda_I(R[X])$ and
$X\longmapsto\Delta_I(R[X])$ preserve countably-filtered direct limits.
 We refer to~\cite[Section~1.1]{PR}, \cite[Section~6]{PS1},
and~\cite[Section~1]{Pper} for a discussion of accessible additive
monads on the category of sets, which describe locally presentable
abelian categories with a projective generator.

\Section{Sequentially Derived Torsion and Complete Complexes}
\label{seq-secn}

 What Yekutieli~\cite{Yek3} calls ``derived complete complexes in
the sequential sense'' are best understood geometrically.

 Let us first introduce some notation and recall the definitions.
 Let $\s=(s_1,\dots,s_m)$ be a finite sequence of generators of
the ideal $I\subset R$ (see Section~\ref{introd-notation}).
 For every integer $n\ge1$, we denote by~$\s^n$ the sequence of
elements $s_1^n$,~\dots,~$s_m^n$.

 Let $K(R;\s)$ denote the Koszul complex
$$
 K(R;\s)=(R\overset{s_1}\rarrow R)\ot_R\dotsb\ot_R
  (R\overset{s_m}\rarrow R),
$$
which is a finite complex of finitely generated free $R$\+modules 
concentrated in the cohomological degrees~$-m$,~\dots,~$0$.
 The \emph{infinite dual Koszul complex}
$$
 K_\infty\spcheck(R;\s)=\varinjlim\nolimits_{n\ge1}\Hom_R(K(R;\s^n),R)
$$
is a finite complex of flat $R$\+modules concentrated in
the cohomological degrees~$0$,~\dots,~$m$.
 It can be computed as the tensor product
$$
 K_\infty\spcheck(R;\s)=(R\rarrow R[s_1^{-1}])\ot_R\dotsb\ot_R
 (R\rarrow R[s_m^{-1}]).
$$
 The \emph{\v Cech complex} $\check C(R;\s)$ is constructed as
the kernel of the natural surjective morphism of complexes
$K_\infty\spcheck(R;\s)\rarrow R$.

 There is an explicit construction of a bounded (($m+1$)\+term)
complex of countably generated free $R$\+modules quasi-isomorphic to
$K_\infty\spcheck(R;\s)$; it is called the ``telescope complex''
in~\cite[Section~5]{PSY} (see also~\cite[Section~2]{Pmgm}).

 One can show that the complexes $K_\infty\spcheck(R;\s)$ and
$\check C(R;\s)$ do not depend on the choice of a particular
sequence of generators of a given ideal $I\subset R$ up to
a natural isomorphism in $\sD(R\modl)$.
 In fact, the following lemma holds.
 
\begin{lem} \label{Koszul-Cech-determined-by-ideal}
 Let $I'=(s'_1,\dotsc,s'_m)$ and $I''=(s''_1,\dotsc,s''_n)
\subset R$ be two finitely generated ideals with the same radical
$\sqrt{I'}=\sqrt{I''}$.
 Put\/ $\s'=(s'_1,\dotsc,s'_m)$ and\/ $\s''=(s''_1,\dotsc,s''_n)$, and
let $(\s',\s'')$ denote the concatenation $(s'_1,\dotsc,s'_m,
\allowbreak s''_1,\dotsc,s''_n)$ of the two sequences.
 Then the natural morphisms of complexes of $R$\+modules
\begin{equation}  \label{infinite-dual-Koszul-comparison}
 K_\infty\spcheck(R;\s')\llarrow K_\infty\spcheck(R;(\s',\s''))\lrarrow
 K_\infty\spcheck(R;\s'')
\end{equation}
and
$$
 \check C(R;\s')\llarrow\check C(R;(\s',\s''))\lrarrow\check C(R;\s'')
$$
are quasi-isomorphisms.
\end{lem}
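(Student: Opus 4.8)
The plan is to deduce both pairs of quasi-isomorphisms from the elementary observation that \emph{if a finite sequence $\mathbf u=(u_1,\dotsc,u_k)$ of elements of a commutative ring $A$ generates the unit ideal, then $K_\infty\spcheck(A;\mathbf u)$ is an acyclic complex}. To prove this, I would note that the sequence $\mathbf u^n=(u_1^n,\dotsc,u_k^n)$ also generates the unit ideal for every $n\ge1$ (its radical equals $\sqrt{(\mathbf u)}=A$). Hence the Koszul complex $K(A;\mathbf u^n)$, a bounded complex of finitely generated free $A$\+modules, admits a contracting homotopy (built from a representation of~$1$ as an $A$\+linear combination of the $u_i^n$) and is therefore contractible; consequently $\Hom_A(K(A;\mathbf u^n),A)$ is contractible as well. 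Since $K_\infty\spcheck(A;\mathbf u)=\varinjlim_{n\ge1}\Hom_A(K(A;\mathbf u^n),A)$ is a filtered colimit of acyclic complexes, it is acyclic.

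Next I would handle the morphisms~\eqref{infinite-dual-Koszul-comparison} by appending one generator at a time; by symmetry it is enough to treat $K_\infty\spcheck(R;(\s',\s''))\rarrow K_\infty\spcheck(R;\s')$, and this morphism factors as the composition of the morphisms that successively forget $s''_n$,~\dots,~$s''_1$. So I reduce to the claim that, for a commutative ring $A$, a finite sequence $\mathbf u$ in~$A$, and an element $t\in\sqrt{(\mathbf u)}$, the natural map $K_\infty\spcheck(A;(\mathbf u,t))\rarrow K_\infty\spcheck(A;\mathbf u)$ is a quasi-isomorphism. Here $K_\infty\spcheck(A;(\mathbf u,t))=K_\infty\spcheck(A;\mathbf u)\ot_A(A\rarrow A[t^{-1}])$, and the map is $\mathrm{id}\ot\varepsilon$, where $\varepsilon\:(A\rarrow A[t^{-1}])\rarrow A$ is the augmentation (the identity in cohomological degree~$0$). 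The cone of the termwise split surjection~$\varepsilon$ is quasi-isomorphic to $(\ker\varepsilon)[1]=\check C(A;t)[1]$, which is simply $A[t^{-1}]$ placed in degree~$0$; since $K_\infty\spcheck(A;\mathbf u)$ is a bounded complex of flat $A$\+modules, tensoring the corresponding distinguished triangle with it shows that the cone of $\mathrm{id}\ot\varepsilon$ is $K_\infty\spcheck(A;\mathbf u)\ot_A A[t^{-1}]=K_\infty\spcheck(A[t^{-1}];\mathbf u)$.

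At this point the hypothesis on the radicals enters. Because $t\in\sqrt{(\mathbf u)}$, some power $t^N$ lies in $(\mathbf u)$, and since $t$ is invertible in $A[t^{-1}]$ it follows that the image of $\mathbf u$ generates the unit ideal in $A[t^{-1}]$; hence $K_\infty\spcheck(A[t^{-1}];\mathbf u)$ is acyclic by the observation above, which proves the claim. Applying the claim over $A=R$ with $t=s''_j$, using that $s''_j\in\sqrt{I''}=\sqrt{I'}\subseteq\sqrt{(s'_1,\dotsc,s'_m,s''_1,\dotsc,s''_{j-1})}$ (and symmetrically with the roles of $\s'$ and $\s''$ exchanged), yields~\eqref{infinite-dual-Koszul-comparison}. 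For the \v Cech complexes I would invoke the short exact sequences of complexes $0\rarrow\check C(R;\s)\rarrow K_\infty\spcheck(R;\s)\rarrow R\rarrow0$: the comparison maps between the complexes $K_\infty\spcheck$ are compatible with the augmentations to~$R$, hence restrict to the comparison maps between the complexes $\check C$, producing a morphism of distinguished triangles in $\sD(R\modl)$ that is the identity on the $R$\+terms and a quasi-isomorphism on the $K_\infty\spcheck$\+terms; therefore it is a quasi-isomorphism on the $\check C$\+terms as well.

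The step I expect to require the most care is not a deep one: it is the bookkeeping of identifying all the cones correctly and checking that the various comparison morphisms really are induced by the augmentation maps, so that they compose properly and are compatible with passing to the subcomplexes $\check C(R;\s)$. The one genuinely substantive point is the localization trick in the third paragraph, which converts the equality of radicals $\sqrt{I'}=\sqrt{I''}$ into the statement that a generating sequence of one ideal becomes a unit-ideal sequence after inverting a single generator of the other; everything else is formal manipulation with cones, flat tensor factors, and filtered colimits, and in particular neither weak proregularity nor the telescope complex is needed.
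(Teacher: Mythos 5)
Your proof is correct. The paper itself does not spell out an argument for this lemma: it cites \cite{PSY} and \cite{Yek3}, and offers as an alternative the geometric observation that all three \v Cech complexes compute the quasi-coherent cohomology $H^*(U,\cO_U)$ of the open complement $U\subset\Spec R$ of the common zero locus, which depends only on the radical. Your route is the standard self-contained algebraic proof of the cited results (it is essentially the argument one finds in \cite{PSY} and in the Stacks Project): reduce to adjoining a single element $t\in\sqrt{(\mathbf u)}$, identify the cone of the forgetful map with $K_\infty\spcheck(A;\mathbf u)\ot_A A[t^{-1}]=K_\infty\spcheck(A[t^{-1}];\mathbf u)$ using flatness of the terms, and observe that this vanishes in the derived category because the images of the $u_i$ generate the unit ideal in $A[t^{-1}]$ (the contractibility of the Koszul complexes $K(A[t^{-1}];\mathbf u^n)$, hence of their duals, passed to the filtered colimit). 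This is exactly the algebraic incarnation of the geometric statement that $D(t)$ is contained in $\bigcup_i D(u_i)$. What your version buys is that the \emph{specific} comparison morphisms of complexes are shown to be quasi-isomorphisms, with no appeal to refinement of \v Cech covers or to sheaf cohomology; it also makes transparent that neither weak proregularity nor the telescope complex enters. The deduction of the \v Cech-complex statement from the $K_\infty\spcheck$ statement via the termwise split short exact sequences $0\to\check C\to K_\infty\spcheck\to R\to0$ and the two-out-of-three property for triangles is likewise correct. All the intermediate identifications you flag as bookkeeping (the cone of a termwise split surjection being the shifted kernel, compatibility of the tensor decomposition with the augmentations, base change of $K_\infty\spcheck$ along $A\to A[t^{-1}]$) do hold as stated.
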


\begin{proof}
 This is~\cite[Corollary~6.2]{PSY} and~\cite[Proposition~2.20]{Yek3}.
 Alternatively, one can observe that all the three \v Cech complexes
in question compute the quasi-coherent sheaf cohomology $H^*(U,\cO_U)$
of the structure sheaf $\cO_U$ on the quasi-compact open complement
$U\subset\Spec R$ of the closed subset $Z\subset\Spec R$ corresponding
to the ideal $I'+I''$ (or which is the same, the closed subset
corresponding to $I'$ or~$I''$).
\end{proof}

 Given an element $s\in R$, an $R$\+module $M$ is said to be
\emph{$s$\+torsion} if for every $m\in M$ there exists $n\ge1$
such that $s^nm=0$ in~$M$.
 An $R$\+module $M$ is said to be \emph{$I$\+torsion} if it is
$s$\+torsion for every $s\in I$.
 We denote the Serre subcategory of $I$\+torsion $R$\+modules by
$R\modl_{I\tors}\subset R\modl$.

 A complex of $R$\+modules $M^\bu$ is said to be \emph{derived
$I$\+torsion in the sequential sense} \cite[Section~2]{Yek3} if
the complex $\check C(R;\s)\ot_R M^\bu$ is acyclic, or equivalently,
the canonical morphism of complexes $K_\infty\spcheck(R;\s)\ot_R
M^\bu\rarrow M^\bu$ is a quasi-isomorphism.

\begin{lem} \label{sequentially-torsion}
 The following three conditions are equivalent for a complex of
$R$\+modules~$M^\bu$: \hbadness=1050
\begin{enumerate}
\item $M^\bu$ is derived $I$\+torsion in the sequential sense;
\item the complex $R[s_j^{-1}]\ot_RM^\bu$ is acyclic for every
$j=1$,~\dots,~$m$;
\item the cohomology $R$\+module $H^n(M^\bu)$ is $I$\+torsion
for every $n\in\boZ$.
\end{enumerate}
\end{lem}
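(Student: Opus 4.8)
The plan is to prove the cycle of implications $(1)\Rightarrow(2)\Rightarrow(3)\Rightarrow(1)$, using the tensor-product decomposition
$$
 K_\infty\spcheck(R;\s)=(R\rarrow R[s_1^{-1}])\ot_R\dotsb\ot_R(R\rarrow R[s_m^{-1}])
$$
recalled above, together with the fact that $\check C(R;\s)$ is the kernel of $K_\infty\spcheck(R;\s)\rarrow R$, so that there is a distinguished triangle $\check C(R;\s)\ot_RM^\bu\rarrow K_\infty\spcheck(R;\s)\ot_RM^\bu\rarrow M^\bu\rarrow{}$ in $\sD(R\modl)$. Condition~(1) says the first term is acyclic, equivalently the second map is a quasi-isomorphism; this is the form I will use throughout.

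First I would treat $(1)\Leftrightarrow(2)$. Since each $R[s_j^{-1}]$ is flat over $R$, tensoring is exact, so the acyclicity statements involve no derived functors. The key observation is that $R[s_j^{-1}]\ot_R M^\bu$ is (up to quasi-isomorphism, indeed on the nose) a direct summand-free piece of $\check C(R;\s)\ot_R M^\bu$: expanding the tensor product of the two-term complexes $(R\to R[s_j^{-1}])$, the \v Cech complex is the direct sum of all the ``nontrivial'' terms $R[s_{j_1}^{-1}]\ot_R\dotsb\ot_R R[s_{j_k}^{-1}]$ with $k\ge1$. One direction is immediate: if each $R[s_j^{-1}]\ot_RM^\bu$ is acyclic, then so is $R[s_{j_1}^{-1}]\ot_R\dotsb\ot_R R[s_{j_k}^{-1}]\ot_R M^\bu$ (it is a localization of an acyclic complex), hence $\check C(R;\s)\ot_R M^\bu$ is acyclic as a finite iterated extension of acyclic complexes. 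For the converse, I would run an induction on~$m$: write $K_\infty\spcheck(R;\s)=(R\to R[s_m^{-1}])\ot_R K_\infty\spcheck(R;\s')$ with $\s'=(s_1,\dots,s_{m-1})$, use that $(1)$ for $\s$ forces, after base change along $R\to R[s_m^{-1}]$, that $M^\bu$ and $R[s_m^{-1}]\ot_R M^\bu$ become acyclic there, and peel off one localization at a time; alternatively invoke Lemma~\ref{Koszul-Cech-determined-by-ideal} to reduce each single $s_j$ to the case $m=1$.

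The implication $(3)\Rightarrow(1)$ is where the real work is, and I expect it to be the main obstacle. The statement is essentially local cohomology: if every $H^n(M^\bu)$ is $I$\+torsion, then $M^\bu$ is ``supported on $V(I)$'' and the \v Cech complex, which computes sections over the complement $U=\Spec R\setminus V(I)$, should kill it. I would first reduce to a single module: by the hyper-(co)homology spectral sequence $H^p(\check C(R;\s)\ot_R H^q(M^\bu))\Rightarrow H^{p+q}(\check C(R;\s)\ot_R M^\bu)$ — legitimate because $\check C(R;\s)$ is a bounded complex of flat modules — it suffices to show $\check C(R;\s)\ot_R N$ is acyclic for every $I$\+torsion $R$\+module $N$. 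For such $N$, each element is annihilated by a power of each $s_j$, so $R[s_j^{-1}]\ot_R N=\varinjlim(N\overset{s_j}\to N\overset{s_j}\to\cdots)=0$; hence every term $R[s_{j_1}^{-1}]\ot_R\dotsb\ot_R N$ of $\check C(R;\s)\ot_R N$ vanishes, and the complex is not merely acyclic but zero. Thus $(3)\Rightarrow(2)$ directly, and combined with $(2)\Rightarrow(1)$ this closes the cycle. Finally, for $(1)\Rightarrow(3)$ — or more economically $(2)\Rightarrow(3)$ — I would argue that acyclicity of $R[s_j^{-1}]\ot_R M^\bu$ means, via flatness, that $R[s_j^{-1}]\ot_R H^n(M^\bu)=0$ for all $n$ and all~$j$, i.e.\ each $H^n(M^\bu)$ is $s_j$\+torsion for every generator $s_j$; since it suffices to check the $I$\+torsion condition on a generating set of $I$, this gives $(3)$. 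The one genuine subtlety to be careful about is the convergence and boundedness of the hyperhomology spectral sequence for an unbounded complex $M^\bu$ against the bounded flat complex $\check C(R;\s)$, but since the latter has amplitude $[1,m]$ the spectral sequence has only finitely many columns and converges without any boundedness hypothesis on $M^\bu$.
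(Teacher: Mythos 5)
Your proposal is correct and follows essentially the same route as the paper: the decomposition of $\check C(R;\s)$ into localizations $R[s_{j_1}^{-1}\dotsm s_{j_k}^{-1}]$ gives (2)$\Rightarrow$(1), the contractibility of $(R\to R[s_j^{-1}])\ot_R R[s_j^{-1}]$ (which is what your base-change/induction step amounts to) gives (1)$\Rightarrow$(2), and flatness of $R[s_j^{-1}]$ together with the observation that a module is $I$\+torsion iff it is $s_j$\+torsion for each generator gives (2)$\Leftrightarrow$(3). The hyperhomology spectral sequence you set up for (3)$\Rightarrow$(1) is superfluous, as you yourself note when you observe that (3)$\Rightarrow$(2) is immediate from flatness.
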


\begin{proof}
 (1)~$\Longrightarrow$~(2) The finite complex of flat $R$\+modules
$R[s_j^{-1}]\ot_R K_\infty\spcheck(R;\s)$ is contractible for every
$1\le j\le m$.
 Hence the complex $R[s_j^{-1}]\ot_RK_\infty\spcheck(R;\s)\ot_RM^\bu$
is acyclic for any complex of $R$\+modules~$M^\bu$.

 (2)~$\Longrightarrow$~(1) Every term of the finite complex of
$R$\+modules $\check C(R;\s)$ is a finite direct sum of $R$\+modules
of the form $R[(s_jt)^{-1}]\simeq R[s_j^{-1}]\ot_R R[t^{-1}]$ for
some $1\le j\le m$ and $t\in R$.
 Hence acyclicity of the complexes $R[s_j^{-1}]\ot_RM^\bu$ implies
acyclicity of the complex $\check C(R;\s)\ot_RM^\bu$.

 (2)~$\Longleftrightarrow$~(3) Clearly, an $R$\+module $N$ is
$I$\+torsion if and only if it is $s_j$\+torsion for every
$1\le j\le m$.
 Equivalently, the latter condition means that $R[s_j^{-1}]\ot_RN=0$.
 It remains to apply these observations to the $R$\+modules
$N=H^n(M^\bu)$ and recall that the $R$\+module $R[s_j^{-1}]$ is flat.
\end{proof}

 In other words, Lemma~\ref{sequentially-torsion} describes 
the category of derived $I$\+torsion complexes (of $R$\+modules)
in the sequential sense as the full subcategory $\sD_{I\tors}(R\modl)
\subset\sD(R\modl)$ of all complexes of $R$\+modules with
$I$\+torsion cohomology modules in the derived category of $R$\+modules.

 Dually, a complex of $R$\+modules $C^\bu$ is said to be
\emph{derived $I$\+adically complete in the sequential sense}
\cite[Section~2]{Yek3} if $\boR\Hom_R(\check C(R;\s),C^\bu)=0$, or
equivalently, the canonical morphism $C^\bu\rarrow
\boR\Hom_R(K_\infty\spcheck(R;\s),C^\bu)$
is an isomorphism in $\sD(R\modl)$.
 An equivalent definition can be found in~\cite[Definition tag~091S]{SP}:
a complex $C^\bu$ is ``derived complete with respect to~$I$'' if
$\boR\Hom_R(R[s^{-1}],C^\bu)=0$ for all $s\in I$.

\begin{lem} \label{sequentially-complete}
 The following three conditions are equivalent for a complex of
$R$\+modules~$C^\bu$: \hbadness=1050
\begin{enumerate}
\item $C^\bu$ is derived $I$\+adically complete in the sequential sense;
\item the object\/ $\boR\Hom_R(R[s_j^{-1}],C^\bu)\in\sD(R\modl)$
vanishes for all $j=1$,~\dots,~$m$;
\item the cohomology $R$\+module $H^n(C^\bu)$ is an $I$\+contramodule
for every $n\in\boZ$.
\end{enumerate}
\end{lem}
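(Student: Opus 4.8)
The plan is to mirror the proof of Lemma~\ref{sequentially-torsion}, trading $\ot_R$ for $\boR\Hom_R$ and, crucially, replacing the exactness of the functor $R[s_j^{-1}]\ot_R{-}$ by the fact recorded above that the projective dimension of the $R$\+module $R[s_j^{-1}]$ does not exceed~$1$.

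For the implication (1)~$\Longrightarrow$~(2): assuming that the canonical morphism $C^\bu\rarrow\boR\Hom_R(K_\infty\spcheck(R;\s),C^\bu)$ is an isomorphism, I would apply $\boR\Hom_R(R[s_j^{-1}],{-})$ to it and invoke the derived tensor-hom adjunction to get
$$
 \boR\Hom_R(R[s_j^{-1}],C^\bu)\,\simeq\,
 \boR\Hom_R\bigl(R[s_j^{-1}]\ot_R^{\boL}K_\infty\spcheck(R;\s),\>C^\bu\bigr).
$$
Since $R[s_j^{-1}]$ is flat and $K_\infty\spcheck(R;\s)$ is a bounded complex of flat $R$\+modules, the derived tensor product here is the ordinary one, and $R[s_j^{-1}]\ot_RK_\infty\spcheck(R;\s)$ is contractible (its $j$\+th Koszul factor being the identity endomorphism of $R[s_j^{-1}]$) --- this is exactly the computation already used in the proof of Lemma~\ref{sequentially-torsion}. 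Hence the left-hand side vanishes, which is~(2). For (2)~$\Longrightarrow$~(1): I would recall from the proof of Lemma~\ref{sequentially-torsion} that every term of the finite complex $\check C(R;\s)$ is a finite direct sum of $R$\+modules of the form $R[(s_jt)^{-1}]\simeq R[s_j^{-1}]\ot_RR[t^{-1}]$; then, again by the tensor-hom adjunction,
$$
 \boR\Hom_R\bigl(R[(s_jt)^{-1}],\>C^\bu\bigr)\,\simeq\,
 \boR\Hom_R\bigl(R[t^{-1}],\,\boR\Hom_R(R[s_j^{-1}],C^\bu)\bigr)=0
$$
under hypothesis~(2), and an evident induction on the number of nonzero terms of $\check C(R;\s)$ (using its stupid filtration) gives $\boR\Hom_R(\check C(R;\s),C^\bu)=0$, i.e., (1).

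The equivalence (2)~$\Longleftrightarrow$~(3) is the part where the bound on projective dimension does the essential work. Fixing $j$, I would pick a two-term resolution $0\rarrow F_1\rarrow F_0\rarrow R[s_j^{-1}]\rarrow0$ of $R[s_j^{-1}]$ by free $R$\+modules and apply $\boR\Hom_R({-},C^\bu)$ to the distinguished triangle $F_1\rarrow F_0\rarrow R[s_j^{-1}]\rarrow F_1[1]$. Since $H^q\boR\Hom_R(F_i,C^\bu)=\Hom_R(F_i,H^q(C^\bu))$ for the projective modules $F_i$, the resulting long exact cohomology sequence collapses into a natural short exact sequence
$$
 0\rarrow\Ext^1_R\bigl(R[s_j^{-1}],H^{n-1}(C^\bu)\bigr)\rarrow
 H^n\bigl(\boR\Hom_R(R[s_j^{-1}],C^\bu)\bigr)\rarrow
 \Hom_R\bigl(R[s_j^{-1}],H^n(C^\bu)\bigr)\rarrow0
$$
for every $n\in\boZ$. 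Granting this, condition~(2) --- vanishing of $\boR\Hom_R(R[s_j^{-1}],C^\bu)$ for every $j$ --- is equivalent to the simultaneous vanishing of $\Hom_R(R[s_j^{-1}],H^n(C^\bu))$ and $\Ext^1_R(R[s_j^{-1}],H^n(C^\bu))$ for all $j$ and all $n$, that is, to each cohomology module $H^n(C^\bu)$ being an $s_j$\+contramodule for every~$j$; and by \cite[Theorem~5.1]{Pcta} (quoted after the definition of $I$\+contramodules above) this is precisely~(3).

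The one point calling for genuine care is that the short exact sequence above, and hence the equivalence (2)~$\Longleftrightarrow$~(3), has to hold for an \emph{arbitrary}, possibly unbounded, complex $C^\bu$, with no convergence hypothesis imposed. This is exactly what the finiteness of the projective dimension of $R[s_j^{-1}]$ secures: the resolution $F_\bu$ has only two terms, so the total complex computing $\boR\Hom_R(R[s_j^{-1}],C^\bu)$ presents no difficulty, and the long exact sequence automatically breaks into the displayed short one. Everything else is formal juggling with adjunctions and with finite filtrations of bounded complexes, entirely parallel to the already established Lemma~\ref{sequentially-torsion}.
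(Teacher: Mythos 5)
Your proposal is correct and follows essentially the same route as the paper, which simply says that (1)$\Leftrightarrow$(2) is proved as in the torsion lemma (via contractibility of $R[s_j^{-1}]\ot_RK_\infty\spcheck(R;\s)$ and the shape of the terms of $\check C(R;\s)$, dualized through the tensor-hom adjunction) and that (2)$\Leftrightarrow$(3) follows from \cite[Theorem~5.1]{Pcta} together with the projective dimension of $R[s_j^{-1}]$ being at most~$1$. Your short exact sequence relating $H^n\,\boR\Hom_R(R[s_j^{-1}],C^\bu)$ to $\Hom_R(R[s_j^{-1}],H^n(C^\bu))$ and $\Ext^1_R(R[s_j^{-1}],H^{n-1}(C^\bu))$ is exactly the intended unpacking of that last point.
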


\begin{proof}
 The equivalence (1)~$\Longleftrightarrow$~(2) is provable in the same
way as in the previous lemma.
 The equivalence (2)~$\Longleftrightarrow$~(3) holds because every
$R$\+module $B$ satisfying $\Ext^*_R(R[s_j^{-1}],B)=0$ for all
$1\le j\le m$ is an $I$\+contramodule~\cite[Theorem~5.1]{Pcta} and
the projective dimension of the $R$\+modules $R[s^{-1}]$ does not
exceed~$1$.
\end{proof}

 Lemma~\ref{sequentially-complete} describes the category of derived
$I$\+adically complete complexes (of $R$\+modules) in the sequential
sense as the full subcategory $\sD_{I\ctra}(R\modl)\subset\sD(R\modl)$
of all complexes of $R$\+modules with $I$\+contramodule cohomology
modules in the derived category of $R$\+modules.

\medskip

 Now that we are finished with the definitions and basic lemmas, we can
have the geometric discussion promised in the beginning of this section.
 The basic concepts are the \emph{quasi-coherent sheaves} and
the \emph{contraherent cosheaves} on a scheme~$X$.
 The former is well-known, and the latter was introduced in
the preprint~\cite{Pcosh}.

 The definition of a contraherent cosheaf is obtained by dualizing
a suitably formulated definition of a quasi-coherent sheaf.
 Let us recall the Enochs--Estrada intepretation of quasi-coherent
sheaves~\cite[Section~2]{EE}, which is convenient for dualization.
 A quasi-coherent sheaf $\F$ on a scheme $X$ is the same thing as
a rule assigning to every affine open subscheme $U\subset X$
an $\cO(U)$\+module $\F(U)$ and to every pair of affine open subschemes
$V\subset U\subset X$ a morphism of $\cO(U)$\+modules $\F(U)\rarrow
\F(V)$ in such a way that the induced morphism of $\cO(V)$\+modules
$\cO(V)\ot_{\cO(U)}\F(U)\rarrow\F(V)$ is an isomorphism, and
the triangle diagrams $\F(U)\rarrow\F(V)\rarrow\F(W)$ are commutative
for all triples of affine open subschemes $W\subset V\subset U
\subset X$.
 One can check that any such set of data extends uniquely to
a quasi-coherent sheaf of $\cO_X$\+modules defined on all (and not
only affine) open subsets in~$X$.

 Dually, a \emph{contraherent cosheaf} $\fP$ on a scheme $X$ is a rule
assigning to every affine open subscheme $U\subset X$
an $\cO(U)$\+module $\fP[U]$ and to every pair of affine open
subschemes $V\subset U\subset X$ a morphism of $\cO(U)$\+modules
$\fP[V]\rarrow\fP[U]$ such that the induced morphism of
$\cO(V)$\+modules $\fP[V]\rarrow\Hom_{\cO(U)}(\cO(V),\fP[U])$ is
an isomorphism, $\Ext^1_{\cO(U)}(\cO(V),\fP[U])=0$, and
the triangle diagrams $\fP[W]\rarrow\fP[V]\rarrow\fP[U]$ are
commutative.
 Here the Ext vanishing condition only needs to be imposed for
$\Ext^1$, as the projective dimension of the $\cO(U)$\+module
$\cO(V)$ never exceeds~$1$ \,\cite[beginning of Section~1.1 and
Lemma~1.2.4]{Pcosh} (no such condition was needed in the quasi-coherent
context, as the $\cO(U)$\+module $\cO(V)$ is flat).
 Furthermore, any such set of data extends uniquely to a cosheaf
of $\cO_X$\+modules defined on all (and not only affine) open
subsets in~$X$ \,\cite[Theorem~2.2.1]{Pcosh}.

 We will not reproduce here the constructions of the direct and inverse
image functors of contraherent cosheaves (denoted by $f_!$ and $f^!$,
respectively, for a nice enough scheme morphism~$f$).
 These are dual to the familiar constructions for quasi-coherent
sheaves, and their properties are dual.
 We refer to~\cite[Section~2.3]{Pcosh} for the details.

 For any quasi-compact semi-separated scheme $Y$, the derived category
of the abelian category of quasi-coherent sheaves on $Y$ is equivalent
to the derived category of the exact category of contraherent cosheaves
on~$Y$ \,\cite[Theorem~4.6.6]{Pcosh}
$$
 \sD^\star(Y\qcoh)\simeq\sD^\star(Y\ctrh)
$$
for every bounded or unbounded derived category symbol $\star=\bs$,
$+$, $-$, or~$\varnothing$.
 Furthermore, for any morphism of quasi-compact semi-separated
schemes $f\:Y\rarrow X$, the equivalences of categories
$\sD^\star(Y\qcoh)\simeq\sD^\star(Y\ctrh)$ and $\sD^\star(X\qcoh)
\simeq\sD^\star(X\ctrh)$ transform the right derived direct image
functor of quasi-coherent sheaves
$$
 \boR f_*\:\sD^\star(Y\qcoh)\lrarrow\sD^\star(X\qcoh)
$$
into the left derived direct image functor of contraherent cosheaves
$$
 \boL f_!\:\sD^\star(Y\ctrh)\lrarrow\sD^\star(X\ctrh),
$$
so $\boR f_*=\boL f_!$ \cite[Theorem~4.8.1]{Pcosh}.

 In particular, for an affine scheme $X=\Spec R$, the abelian category
$X\qcoh$ is equivalent to the category of $R$\+modules.
 The exact category $X\ctrh$ is a full subcategory in $R\modl$
consisting of all the \emph{contraadjusted} $R$\+modules (that is,
$R$\+modules that are $s$\+contraadjusted for all $s\in R$, in
the sense of the definition in
Section~\ref{derived-complete-modules-secn}).
 This restriction does not affect the derived category:
the inclusion functor $X\ctrh\rarrow R\modl$ induces an equivalence
$\sD^\star(X\ctrh)\simeq\sD^\star(R\modl)$.

 Let $Z\subset X$ be the closed subscheme $Z=\Spec R/I$, and
let $U=X\setminus Z$ be its open complement.
 Let $j\:U\rarrow X$ denote the open embedding morphism.
 Then the functor $\boR j_*$ has a left adjoint functor
$$
 j^*\:\sD^\star(X\qcoh)\lrarrow\sD^\star(U\qcoh),
$$
while the functor $\boL j_!$ has a right adjoint functor
$$
 j^!\:\sD^\star(X\ctrh)\lrarrow\sD^\star(U\ctrh).
$$
 Moreover, both the compositions $j^*\circ\boR j_*$ and
$j^!\circ\boL j_!$ are identity endofunctors.
 Hence one obtains a \emph{recollement}, which is described purely
algebraically in~\cite[Section~3]{Pmgm} (in some form, these
results go back to~\cite[Section~6]{DG}).

 Then the full triangulated subcategory of derived $I$\+torsion complexes
in the sequential sense $\sD^\star_{I\tors}(R\modl)\subset\sD(R\modl)$
can be described as the kernel of the quasi-coherent open restriction
functor~$j^*$,
$$
 \sD^\star_{I\tors}(R\modl)\,=\,
 \ker(j^*\:\sD^\star(X\qcoh)\rarrow\sD^\star(U\qcoh)).
$$
 Dually, the full triangulated  subcategory of derived $I$\+adically
complete complexes in the sequential sense $\sD^\star_{I\ctra}(R\modl)
\subset\sD^\star(R\modl)$ can be described as the kernel of contraherent
open restriction functor~$j^!$,
$$
 \sD^\star_{I\ctra}(R\modl)\,=\,
 \ker(j^!\:\sD^\star(X\ctrh)\rarrow\sD^\star(U\ctrh)).
$$

 In fact, it follows from the existence of the recollement that
the categories of complexes with $I$\+torsion and with $I$\+contramodule
cohomology modules are equivalent \cite[Proposition~3.3
and Theorem~3.4]{Pmgm}
\begin{equation} \label{general-mgm-equivalence}
 \sD^\star_{I\tors}(R\modl)\simeq
 \sD^\star(R\modl)/\sD^\star(U)\simeq
 \sD^\star_{I\ctra}(R\modl).
\end{equation}
 Here $\sD^\star(U)$ is a notation for the category $\sD^\star(U\qcoh)
\simeq\sD^\star(U\ctrh)$, embedded into $\sD(R\modl)$ by the functor
$\boR j_*=\boL j_!$.
 The two equivalences in~\eqref{general-mgm-equivalence} are provided by
the compositions of the identity inclusions $\sD^\star_{I\tors}(R\modl)
\rarrow\sD^\star(R\modl)$ and $\sD^\star_{I\ctra}(R\modl)
\rarrow\sD^\star(R\modl)$ with the Verdier quotient functor
$\sD^\star(R\modl)\rarrow\sD^\star(R\modl)/\sD^\star(U)$.
 So the recollement takes the form
\begin{equation} \label{general-mgm-recollement}
\begin{gathered}
 \xymatrix{
   \sD^\star(U\qcoh) \ar@{=}[d] &
   \sD^\star(X\qcoh) \ar@{->>}[l]_{{\ j^*}} \ar@{=}[d] &
   \sD^\star_{I\tors}(R\modl) \ar@{=}[d] \ar@{>->}[l] \\
   \sD^\star(U) \ar@{=}[d] \ar@{>->}[r]^{{\boR j_*=\boL j_!\ \ \ }} &
   \sD^\star(R\modl) \ar@{=}[d] \ar@{->>}[r] &
   \sD^\star(R\modl)/\sD^\star(U) \ar@{=}[d] \\
   \sD^{\star}(U\ctrh) & \sD^\star(X\ctrh) \ar@{->>}[l]_{{\ j^!}} &
   \sD^\star_{I\ctra}(R\modl) \ar@{>->}[l]
 }
\end{gathered}
\end{equation}
where two-headed arrows $\twoheadrightarrow$ denote triangulated Verdier
quotient functors and arrows with a tail $\rightarrowtail$ denote
triangulated fully faithful embeddings.
 After the identifications in the vertical equation signs (explained in
the discussion above), the functors in the upper row are left adjoint to
those in the middle row, which are left adjoint to those in
the lower row.
 In every row, the image of the fully faithful embedding is equal to
the kernel of the Verdier quotient functor.

 The right adjoint functor to the inclusion (the coreflector)
$\sD^\star(R\modl)\rarrow\sD_{I\tors}^\star(R\modl)$ is computable
algebraically as the triangulated functor $M^\bu\longmapsto
K_\infty\spcheck(R;\s)\ot_RM^\bu$.
 This is called the \emph{sequential derived $I$\+torsion functor}
in~\cite{Yek3}.
 The sequential derived $I$\+torsion functor is idempotent because
it is a coreflector.
 One can also see it directly from the quasi-isomorphism
\begin{equation} \label{Koszul-tensor-idempotent}
 K_\infty\spcheck(R;\s)\ot_RK_\infty\spcheck(R;\s) \lrarrow
 K_\infty\spcheck(R;\s),
\end{equation}
which is a particular case of
Lemma~\ref{Koszul-Cech-determined-by-ideal}.
 In fact, there are two quasi-isomorphisms of complexes
in~\eqref{Koszul-tensor-idempotent}, given by the two arrows
in~\eqref{infinite-dual-Koszul-comparison} for $\s'=\s=\s''$.
 The two maps of complexes induce the same isomorphism
in $\sD(R\modl)$.

 Dually, the left adjoint functor to the inclusion (the reflector)
$\sD^\star(R\modl)\rarrow\sD_{I\ctra}^\star(R\modl)$ is computable
algebraically as the triangulated functor $C^\bu\longmapsto
\boR\Hom_R(K_\infty\spcheck(R;\s),\>\allowbreak C^\bu)$.
 This is called the \emph{sequential derived $I$\+adic completion
functor} in~\cite{Yek3}.
 The sequential derived $I$\+adic completion functor is idempotent
because it is a reflector; one can also see it directly from
the quasi-isomorphism~\eqref{Koszul-tensor-idempotent}
(cf.~\cite[Section~1 and Remark~2.23]{Yek3}).

 The restrictions of the functors
$\boR\Hom_R(K_\infty\spcheck(R;\s),{-})$ and
$K_\infty\spcheck(R;\s)\ot_R{-}$ onto the full subcategories
$\sD_{I\tors}^\star(R\modl)$ and $\sD_{I\ctra}^\star(R\modl)
\subset\sD^\star(R\modl)$ provide the mutually inverse
equivalences $\sD_{I\tors}^\star(R\modl)\simeq
\sD_{I\ctra}^\star(R\modl)$.

 Somewhat similarly, the reflector $j^*\:\sD(X\qcoh)\rarrow\sD(U\qcoh)$
onto the full triangulated subcategory
$j_*(\sD(U\qcoh))\subset\sD(X\qcoh)$ is computable algebraically as
the functor $M^\bu\longmapsto\check C(R;\s)\ot_R M^\bu$.
 The coreflector $j^!\:\sD(X\ctrh)\rarrow\sD(U\ctrh)$ onto
(the same, under the identification $\sD(X\qcoh)\simeq\sD(X\ctrh)$)
full triangulated subcategory $j_!(\sD(U\ctrh))\subset\sD(X\ctrh)$ is
computable algebraically as the functor
$C^\bu\longmapsto\boR\Hom_R(\check C(R;\s),\>C^\bu)$.

\begin{rem}
 The reader should be warned that the $!$\+notation (as in $f_!$
and~$f^!$, or $j_!$ and~$j^!$) in the above exposition, as well as
generally in~\cite{Pcosh}, refers to what is called
\emph{Neeman's extraordinary inverse image functor} in~\cite{Pcosh},
with the reference to~\cite{Neem0}.
 It should be distinguished from \emph{Deligne's extraordinary
inverse image functor} constructed by Hartshorne in~\cite{Hart}
and by Deligne in the appendix to~\cite{Hart}.
 In particular, the $!$\+notation in~\cite[Lecture~XI]{SC} stands for
an extraordinary inverse image functor in the sense of Deligne.
 We refer to~\cite[Appendix]{Hart} or~\cite[Introduction and
Section~5.16]{Pcosh} for a discussion.
\end{rem}

\Section{Weak Proregularity}  \label{wpr-secn}

 Let $C$ be an $R$\+module.
 The object $\boR\Hom_R(K_\infty\spcheck(R;\s),\>C)$ of the derived
category of $R$\+modules $\sD(R\modl)$ plays an important role in
our considerations.
 Its cohomology modules appear in the natural short exact sequences
of $R$\+modules
\begin{multline} \label{derived-projective-limit-sequence}
 0\lrarrow\varprojlim\nolimits_{n\ge1}^1 H^{q-1}(K(R;\s^n)\ot_R C)
 \\ \lrarrow H^q\,\boR\Hom_R(K_\infty\spcheck(R;\s),\>C) \\ \lrarrow
 \varprojlim\nolimits_{n\ge1} H^q(K(R;\s^n)\ot_RC)\lrarrow0,
\end{multline}
where $\varprojlim_{n\ge1}^1$ denotes the first derived countable
projective limit.
 The sequence~\eqref{derived-projective-limit-sequence} may be
nontrivial for integers~$q$ in the interval $-m\le q\le 0$.
 (We recall that, following the notation in
Section~\ref{introd-notation}, \,$m$~is the length of the sequence
of generators $\s=(s_1,\dots,s_m)$ of the ideal $I\subset R$.)

 In particular, the $R$\+module $\Delta_I(C)$ is
computable~\cite[Theorem~7.2(iii)]{Pcta} as
\begin{equation} \label{delta-computed}
 \Delta_I(C)=H^0\,\boR\Hom_R(K_\infty\spcheck(R;\s),\>C),
\end{equation}
while the $I$\+adic completion of the $R$\+module $C$ is
\begin{equation} \label{lambda-computed}
 \Lambda_I(C)=\varprojlim\nolimits_{n\ge1} H^0(K(R;\s^n)\ot_RC).
\end{equation}

 A countable projective system of abelian groups $(E_n)_{n\ge1}$ is said
to satisfy the \emph{Mittag-Leffler condition} if for every  $i\ge 1$
there exists $j\ge i$ such that the images of the transition maps
$E_k\rarrow E_i$ coincide (as subgroups in~$E_i$) for all $k\ge j$.
 A projective system $(E_n)_{n\ge1}$ is said to be \emph{pro-zero}
(an alternative terminology is that $(E_n)_{n\ge1}$ satisfies
the \emph{trivial Mittag-Leffler condition}) if for every $i\ge1$ there
exists $j\ge i$ such that the transition map $E_j\rarrow E_i$ is zero.

\begin{lem} \label{Mittag-Leffler+projlim-vanishing}
 A countable projective system of abelian groups $(E_n)_{n\ge1}$ is
pro-zero if and only if it the following two conditions hold:
\begin{enumerate}
\renewcommand{\theenumi}{\roman{enumi}}
\item the projective system $(E_n)_{n\ge1}$ satisfies
the Mittag-Leffler condition; and
\item $\varprojlim_{n\ge1}E_n=0$.
\end{enumerate}
\end{lem}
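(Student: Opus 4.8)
The plan is to treat the two implications separately, the ``only if'' direction being immediate and the ``if'' direction relying on the standard device of passing to the subsystem of stable images.

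For the ``only if'' direction, assume $(E_n)_{n\ge1}$ is pro-zero. Given $i\ge1$, choose $j\ge i$ with the transition map $E_j\rarrow E_i$ equal to zero. Then for every $k\ge j$ the composite $E_k\rarrow E_j\rarrow E_i$ is zero, so the images of $E_k\rarrow E_i$ all equal $\{0\}$; this gives condition~(i). For~(ii), an element of $\varprojlim_{n\ge1}E_n$ is a compatible family $(x_n)_{n\ge1}$, and for the $j$ just chosen one has $x_i=(E_j\rarrow E_i)(x_j)=0$; as $i$ is arbitrary, the family vanishes.

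For the ``if'' direction, assume~(i) and~(ii). For each $i\ge1$ put $E_i'=\bigcap_{k\ge i}\operatorname{im}(E_k\rarrow E_i)\subseteq E_i$. By the Mittag-Leffler condition~(i) this descending intersection is attained: there is $j\ge i$ with $E_i'=\operatorname{im}(E_k\rarrow E_i)$ for all $k\ge j$. The transition maps $E_{i+1}\rarrow E_i$ carry $E_{i+1}'$ into $E_i'$, so $(E_n')_{n\ge1}$ is a projective subsystem, and the induced maps $E_{i+1}'\rarrow E_i'$ are surjective, since for $k$ large $\operatorname{im}(E_k\rarrow E_i)$ factors through $E_{i+1}'$. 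Next I would check $\varprojlim_{n\ge1}E_n'=\varprojlim_{n\ge1}E_n$: in a compatible family $(x_n)$ each $x_i$ lies in $\operatorname{im}(E_k\rarrow E_i)$ for every $k\ge i$, hence in $E_i'$, so the two limits coincide as subgroups of $\prod_n E_n$. Combined with~(ii) this yields $\varprojlim_{n\ge1}E_n'=0$.

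The key remaining point — and the only place the countability of the index set is genuinely used — is that a projective system of abelian groups with surjective transition maps has its limit surjecting onto each term. For $(E_n')$ one proves this by recursion: starting from $x_i\in E_i'$, successively lift along the surjections $E_{i+1}'\twoheadrightarrow E_i'$, $E_{i+2}'\twoheadrightarrow E_{i+1}'$, \dots, and extend downwards by the transition maps, obtaining a compatible family in $(E_n')$ projecting to $x_i$. Since $\varprojlim_{n\ge1}E_n'=0$, it follows that $E_i'=0$ for every~$i$. Finally, $E_i'=0$ together with~(i) says $\operatorname{im}(E_k\rarrow E_i)=0$ for all $k\ge j$, i.e.\ the transition map $E_k\rarrow E_i$ is zero; hence $(E_n)_{n\ge1}$ is pro-zero. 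I expect the surjective-system step to be the only nontrivial ingredient, the rest being bookkeeping with images of transition maps.
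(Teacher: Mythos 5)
Your proof is correct and follows essentially the same route as the paper: both pass to the subsystem $E'_i$ of stable images, observe that its transition maps are surjective and that $\varprojlim_n E'_n=\varprojlim_n E_n$, and then use the surjectivity of the projections $\varprojlim_n E'_n\rarrow E'_i$ for a countable surjective system to conclude $E'_i=0$. The only difference is cosmetic (you define $E'_i$ as the intersection of images and then invoke Mittag-Leffler to see it is attained, while the paper defines it directly as the eventual image), so nothing further is needed.
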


\begin{proof}
 The ``only if'' implication is obvious.
 To prove the ``if'', suppose that a projective system $(E_n)_{n\ge1}$
satisfies the Mittag-Leffler condition, and, for every $i\ge1$,
denote by $E'_i\subset E_i$ the image of the transition map $E_j
\rarrow E_i$ for $j$~large enough.
 Then $(E'_n)_{n\ge1}$ is a projective system of surjective maps of
abelian groups such that $\varprojlim_{n\ge1}E'_n=
\varprojlim_{n\ge1}E_n$.
 Hence the projection maps $\varprojlim_{n\ge1}E'_n\rarrow E'_i$ are
surjective, and $\varprojlim_{n\ge1}E_n=0$ implies $E'_i=0$ for all
$i\ge1$.
\end{proof}

 For any countable projective system of abelian groups $(E_n)_{n\ge1}$
satisfying the Mittag-Leffler condition, one has
$\varprojlim_{n\ge1}^1E_n=0$.
 The following lemma provides a converse implication.

\begin{lem} \label{emmanouil}
 For any countable projective system of abelian groups $(E_n)_{n\ge1}$,
the following two conditions are equivalent:
\begin{enumerate}
\item the projective system $(E_n)_{n\ge1}$ satisfies
the Mittag-Leffler condition;
\item $\varprojlim_{n\ge1}^1 E_n^{(X)}=0$ for some (equivalently,
every) infinite set~$X$.
\end{enumerate}
 Here we denote by $E^{(X)}$ the $X$\+indexed direct sum of copies of
an abelian group~$E$.
\end{lem}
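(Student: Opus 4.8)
The plan is to prove the two implications separately; $(1)\Rightarrow(2)$ is immediate, and $(2)\Rightarrow(1)$ carries all the content (it is, in essence, a theorem of Emmanouil).

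For $(1)\Rightarrow(2)$, I would observe that the $X$\+indexed direct sum functor is exact and commutes with images of group homomorphisms, so that $\operatorname{im}(E_k^{(X)}\to E_i^{(X)})=\bigl(\operatorname{im}(E_k\to E_i)\bigr)^{(X)}$ for all $k\ge i$. Hence the Mittag-Leffler condition for $(E_n)_{n\ge1}$ is inherited by $(E_n^{(X)})_{n\ge1}$ for every set $X$, and then $\varprojlim_{n\ge1}^1 E_n^{(X)}=0$ by the fact recorded just before the lemma. This proves~(2) for every infinite $X$, in particular for some one.

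For $(2)\Rightarrow(1)$ I would argue contrapositively: assuming that $(E_n)_{n\ge1}$ is not Mittag-Leffler, I produce for every infinite set $X$ a nonzero element of $\varprojlim_{n\ge1}^1 E_n^{(X)}$. Two preliminary reductions come first. Fixing a countably infinite subset $\omega\subseteq X$ and using $E_n^{(X)}\cong E_n^{(\omega)}\oplus E_n^{(X\setminus\omega)}$ together with the fact that $\varprojlim^1$ of a direct sum of two countable inverse systems is the direct sum of the two $\varprojlim^1$'s (the two-term complex computing $\varprojlim^1$ of the sum being the sum of the two such complexes), it is enough to treat $X=\omega$. Next, replacing $(E_n)_{n\ge1}$ by a cofinal subsystem --- first a tail $(E_n)_{n\ge i}$, then a subsequence --- which changes neither $\varprojlim_{n\ge1}^0 E_n^{(\omega)}$ nor $\varprojlim_{n\ge1}^1 E_n^{(\omega)}$, I may assume that the subgroups $G_n:=\operatorname{im}(E_n\to E_1)\subseteq E_1$ form a strictly decreasing chain $G_1\supsetneq G_2\supsetneq\dotsb$.

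The heart of the argument is the construction of an explicit non-coboundary. For each $n\ge1$ I would pick $a_n\in G_n\setminus G_{n+1}$ and a preimage $b_n\in E_n$ of $a_n$ under the transition map $E_n\to E_1$, and consider the element $y=(y_n)_{n\ge1}\in\prod_{n\ge1}E_n^{(\omega)}$ whose $n$\+th component $y_n\in\bigoplus_{k\ge1}E_n$ equals $b_n$ in the $n$\+th direct summand and zero in all the others. Recall that $\varprojlim_{n\ge1}^1 E_n^{(\omega)}$ is the cokernel of the homomorphism $d\:\prod_n E_n^{(\omega)}\to\prod_n E_n^{(\omega)}$, \ $d\bigl((x_n)_n\bigr)=\bigl(x_n-p_n(x_{n+1})\bigr)_n$, where $p_n\:E_{n+1}\to E_n$ are the transition maps applied componentwise. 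Supposing $y=d(x)$ with $x=(x_n)_n$ and $x_n=(x_n^{(k)})_{k\ge1}$, the equation reads $x_n^{(k)}=p_n(x_{n+1}^{(k)})$ for $n\ne k$ and $x_k^{(k)}=b_k+p_k(x_{k+1}^{(k)})$. Telescoping the relations of the first kind from level $k$ down to level $1$ and substituting the one of the second kind, one finds $x_1^{(k)}\in a_k+G_{k+1}$, hence $x_1^{(k)}\notin G_{k+1}$ and in particular $x_1^{(k)}\ne0$, for every $k\ge1$. But $x_1$ lies in $E_1^{(\omega)}=\bigoplus_{k\ge1}E_1$, so it has only finitely many nonzero components --- a contradiction. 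Thus $y\notin\operatorname{im}(d)$, so $\varprojlim_{n\ge1}^1 E_n^{(\omega)}\ne0$, which finishes the contrapositive.

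The step I expect to be the main obstacle is the choice of the test element $y$ and the telescoping computation: the point is to place the $b_n$ "diagonally" so that the first components of any putative primitive $x$ are simultaneously forced outside the shrinking subgroups $G_{k+1}$ for all $k$, which is incompatible with $x_1$ having finite support --- and it is essential here that we work with the direct sum $E_n^{(\omega)}$ rather than a product. The surrounding bookkeeping (invariance of $\varprojlim^1$ under tails and cofinal subsequences, and its compatibility with binary direct sums) is routine but must be in place before the construction can be run.
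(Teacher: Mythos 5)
Your proof is correct. Note that the paper does not actually prove this lemma: it simply cites Emmanouil's theorem \cite[Corollary~6\,(i)\,$\Leftrightarrow$\,(iii)]{Emm}, so any complete argument here is ``different from the paper'' in the trivial sense that the paper offers none. What you have written is, in substance, a self-contained reconstruction of the key idea behind Emmanouil's result: the easy direction via exactness of $({-})^{(X)}$ and preservation of images, and the hard direction via the diagonal test element $y_n=b_n\cdot e_n$ whose putative primitive would be forced to have $x_1^{(k)}\in a_k+G_{k+1}$, hence nonzero, for every~$k$ --- contradicting finiteness of support in the direct sum $E_1^{(\omega)}$. All the supporting steps check out: the reductions to $X=\omega$ (additivity of the two-term complex $\prod_n E_n\to\prod_n E_n$ computing $\varprojlim^1$ under binary direct sums) and to a strictly decreasing chain of images $G_n=\operatorname{im}(E_n\to E_1)$ (invariance of $\varprojlim^0$ and $\varprojlim^1$ under passage to tails and cofinal subsequences of a countable tower) are standard and correctly invoked, and the telescoping computation $x_1^{(k)}=p_1\dotsb p_{k-1}(x_k^{(k)})=a_k+p_1\dotsb p_k(x_{k+1}^{(k)})\in a_k+G_{k+1}$ is exactly right. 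The only stylistic remark is that for a reader who wants every detail, the cofinality invariance of $\varprojlim^1$ deserves at least a one-line justification (e.g.\ via the six-term sequence comparing the tower with its cofinal subtower, or the standard computation with the difference map), but this is indeed routine and does not affect correctness.
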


\begin{proof}
 This is the result of the paper~\cite{Emm}
(see~\cite[Corollary~6\,(i)\,$\Leftrightarrow$\,(iii)]{Emm}).
\end{proof}

 We recall the following definition, which plays a key role, and refer
to~\cite[Section~3]{Yek3} for a discussion of its history.
 The ideal $I\subset R$ is said to be \emph{weakly proregular} if,
for every fixed $q<0$, the projective system
$$
 (H^q(K(R;\s^n)))_{n\ge1}
$$
is pro-zero.
 This property does not depend on the choice of a particular finite
system of generators $s_1$,~\dots, $s_m$ of the ideal~$I$
\cite[Corollary~6.3]{PSY}.
 Furthermore, the following assertion holds.

\begin{thm} \label{weak-proregularity-via-injectives}
 The ideal $I\subset R$ is weakly proregular if and only if, for
every injective $R$\+module $J$ and every integer $k>0$, one has
$H^k(K_\infty\spcheck(R;\s)\ot_RJ)=0$.
\end{thm}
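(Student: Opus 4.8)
The plan is to establish both implications by relating the complexes $K_\infty\spcheck(R;\s)\ot_R J$ to the Koszul complexes $K(R;\s^n)$ via the computation of $K_\infty\spcheck(R;\s)$ as $\varinjlim_n \Hom_R(K(R;\s^n),R)$. Since $J$ is injective, applying $\Hom_R({-},J)$ to the Koszul complex $K(R;\s^n)$ is exact, so $H^k(\Hom_R(K(R;\s^n),R)\ot_R J)$ — which equals $H^k(\Hom_R(K(R;\s^n),J))$ because $K(R;\s^n)$ is a complex of finite free modules — is computable as $\Hom_R(H^{-k}(K(R;\s^n)),J)$ by injectivity of $J$. Now $K_\infty\spcheck(R;\s)\ot_R J = \varinjlim_n \Hom_R(K(R;\s^n),J)$, and since filtered colimits of abelian groups are exact, $H^k(K_\infty\spcheck(R;\s)\ot_R J) = \varinjlim_n \Hom_R(H^{-k}(K(R;\s^n)),J)$. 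For $k>0$ we have $-k<0$, so the relevant projective systems are exactly $(H^q(K(R;\s^n)))_{n\ge1}$ for $q<0$, whose behavior is governed by weak proregularity.

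For the ``only if'' direction, suppose $I$ is weakly proregular. Then for each $q=-k<0$ the system $(H^q(K(R;\s^n)))_{n\ge1}$ is pro-zero, meaning for every $i$ there is $j\ge i$ with the transition map $H^q(K(R;\s^j))\to H^q(K(R;\s^i))$ zero. Applying the contravariant functor $\Hom_R({-},J)$ reverses the arrows and preserves zero maps, so the inductive system $(\Hom_R(H^q(K(R;\s^n)),J))_{n\ge1}$ has the property that for every $i$ there is $j\ge i$ with the structure map $\Hom_R(H^q(K(R;\s^i)),J)\to\Hom_R(H^q(K(R;\s^j)),J)$ zero. A filtered colimit of such a ``eventually-zero-transition'' inductive system vanishes. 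Hence $H^k(K_\infty\spcheck(R;\s)\ot_R J)=0$ for all $k>0$.

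For the ``if'' direction, we argue by contrapositive: suppose $I$ is not weakly proregular, so for some $q<0$ the system $(H^q(K(R;\s^n)))_{n\ge1}$ is not pro-zero; set $k=-q>0$. The main obstacle — and the key point — is to exhibit an injective module $J$ for which the colimit $\varinjlim_n \Hom_R(H^q(K(R;\s^n)),J)$ is nonzero. The natural choice is to take $J$ to be a large enough injective cogenerator, for instance an injective module into which every $R$-module embeds, or concretely $J = \prod \{\text{injective hulls}\}$, so that $\Hom_R({-},J)$ is a faithful exact functor. With such a $J$, nonvanishing of $\varinjlim_n \Hom_R(H^q(K(R;\s^n)),J)$ should follow because failure of the pro-zero condition means: there is an index $i$ such that for all $j\ge i$, the image of $H^q(K(R;\s^j))\to H^q(K(R;\s^i))$ is nonzero; equivalently, the pro-object $(H^q(K(R;\s^n)))$ does not vanish in the pro-category, and applying the faithful functor $\Hom_R({-},J)$ (which converts the pro-object into an ind-object) detects this — one needs to check that a non-pro-zero projective system of modules, after applying $\Hom_R({-},J)$ for a faithful injective $J$, yields an ind-object with nonzero colimit. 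This last implication is where care is required: one may have to use that the images $\mathrm{im}(H^q(K(R;\s^j))\to H^q(K(R;\s^i)))$ form a non-stabilizing-to-zero decreasing chain and pick compatible nonzero functionals on them, or invoke that $\Hom_R({-},J)$ takes a non-pro-zero system to a non-ind-zero system precisely when $J$ is an injective cogenerator. I expect this converse direction to be the substantive part of the argument; the forward direction is essentially formal.
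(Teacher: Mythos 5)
The paper does not actually prove this theorem; it only cites Grothendieck, Schenzel, and Porta--Shaul--Yekutieli. Your argument is the standard one behind those references, and its skeleton is sound: the identification
$$
H^k\bigl(K_\infty\spcheck(R;\s)\ot_RJ\bigr)\;\simeq\;
\varinjlim\nolimits_{n\ge1}\Hom_R\bigl(H^{-k}(K(R;\s^n)),J\bigr)
$$
(using that $K(R;\s^n)$ is finite free, that $J$ is injective so $\Hom_R({-},J)$ commutes with cohomology, and that filtered colimits are exact) correctly reduces the theorem to a statement about the projective systems $(H^q(K(R;\s^n)))_{n\ge1}$ for $q<0$, and your ``only if'' direction is complete: a pro-zero system yields an ind-system with eventually-zero structure maps, whose colimit vanishes.

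The gap is exactly where you flag it, in the converse, but it closes much more easily than your discussion suggests: no injective cogenerator, no faithfulness of $\Hom_R({-},J)$, and no choice of ``compatible nonzero functionals'' on the decreasing chain of images is needed. If the system $(E_n)=(H^q(K(R;\s^n)))_{n\ge1}$ is not pro-zero, fix an index~$i$ such that $T_j:=\mathrm{im}\bigl(E_j\to E_i\bigr)\ne0$ for all $j\ge i$, and simply take $J$ to be \emph{any} injective $R$\+module containing $E_i$, with $f\:E_i\hookrightarrow J$ the embedding. Since $f$ is injective, $f|_{T_j}\ne0$ for every $j\ge i$, i.e., none of the composites $E_j\to E_i\to J$ vanishes; hence the class of $f$ in $\varinjlim_n\Hom_R(E_n,J)$ is a nonzero element of $H^{-q}(K_\infty\spcheck(R;\s)\ot_RJ)$. (Your worry about ``picking compatible functionals'' dissolves because a single injective map on $E_i$ is automatically nonzero on every nonzero submodule; the faithfulness of $\Hom_R({-},J)$ for a cogenerator $J$ would only give you, for each $j$ separately, some map nonvanishing on $T_j$, which is not what is needed.) With this one sentence supplied, your proof is complete and agrees with the literature proof the paper cites.
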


\begin{proof}
 This result goes back to Grothendieck~\cite[Lemma~2.4]{LC};
see also~\cite[Theorem~3.2]{Sch} or~\cite[Theorem~4.24]{PSY}.
\end{proof}

 For any $R$\+module $M$, we denote by $\Gamma_I(M)\subset M$
the maximal $I$\+torsion submodule in~$M$.
 The functor $\Gamma_I\:R\modl\rarrow R\modl_{I\tors}$ is right
adjoint to the exact, fully faithful inclusion functor
$R\modl_{I\tors}\rarrow R\modl$.

\begin{cor} \label{when-weakly-proregular-in-terms-of-injectives}
 The ideal $I\subset R$ is weakly proregular if and only if for
every injective $R$\+module $J$ the canonical morphism of complexes
of $R$\+modules
$$
 \Gamma_I(J)\lrarrow K_\infty\spcheck(R;\s)\ot_R J
$$
is a quasi-isomorphism.
\end{cor}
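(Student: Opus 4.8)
The plan is to reduce everything to Theorem~\ref{weak-proregularity-via-injectives} by identifying the functor $\Gamma_I$ with the $0$\+th cohomology functor of the infinite dual Koszul complex. The first step is to establish, for an arbitrary $R$\+module $M$, a natural isomorphism
$$
 H^0\bigl(K_\infty\spcheck(R;\s)\ot_R M\bigr)\,\cong\,\Gamma_I(M).
$$
Since each $K(R;\s^n)$ is a bounded complex of finitely generated free $R$\+modules, there is a natural isomorphism of complexes $\Hom_R(K(R;\s^n),R)\ot_R M\cong\Hom_R(K(R;\s^n),M)$; as the tensor product commutes with direct limits, this yields $K_\infty\spcheck(R;\s)\ot_R M=\varinjlim_{n\ge1}\Hom_R(K(R;\s^n),M)$. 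The complex $\Hom_R(K(R;\s^n),M)$ is concentrated in degrees $0,\dotsc,m$ with degree\+$0$ term $M$ and first differential $M\to M^m$, \ $x\mapsto(s_1^nx,\dotsc,s_m^nx)$; hence $H^0\Hom_R(K(R;\s^n),M)=\{x\in M:s_j^nx=0 \text{ for } 1\le j\le m\}$, and the transition maps of the direct system restrict on these submodules to the evident inclusions. As $H^0$ commutes with filtered colimits, $H^0(K_\infty\spcheck(R;\s)\ot_R M)=\bigcup_{n\ge1}\{x\in M:s_j^nx=0 \text{ for all } j\}$, and this union equals $\Gamma_I(M)$: the inclusion $\Gamma_I(M)\subset\bigcup_n(\dotsb)$ is clear since $s_j\in I$, while the reverse inclusion follows from a pigeonhole argument, a product of $mn$ of the $s_j$ being divisible by $s_j^n$ for at least one~$j$, so that $s_1^nx=\dotsb=s_m^nx=0$ implies $I^{mn}x=0$.

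Next I would identify the canonical morphism $\Gamma_I(J)\to K_\infty\spcheck(R;\s)\ot_R J$ appearing in the statement with the inclusion of $H^0$ as a subcomplex. The complex $K_\infty\spcheck(R;\s)\ot_R J$ is concentrated in cohomological degrees $0,\dotsc,m$, with degree\+$0$ term $J$, so its $0$\+th cohomology is the kernel $Z^0$ of its first differential, a subcomplex sitting in degree~$0$; under the isomorphism of the previous paragraph this subcomplex is $\Gamma_I(J)$, and the canonical morphism of complexes is simply its embedding into $K_\infty\spcheck(R;\s)\ot_R J$. (Equivalently, one may present the canonical morphism as the zigzag $\Gamma_I(J)\llarrow K_\infty\spcheck(R;\s)\ot_R\Gamma_I(J)\rarrow K_\infty\spcheck(R;\s)\ot_R J$ in $\sD(R\modl)$, the left arrow being a quasi-isomorphism because $\Gamma_I(J)$ is an $I$\+torsion module, hence derived $I$\+torsion by Lemma~\ref{sequentially-torsion}; this induces the same map on $H^0$.) Consequently the morphism of complexes $\Gamma_I(J)\to K_\infty\spcheck(R;\s)\ot_R J$ is a quasi-isomorphism if and only if $H^k(K_\infty\spcheck(R;\s)\ot_R J)=0$ for all $k\ne0$, and since the complex lives in degrees $0,\dotsc,m$, this is the same as $H^k(K_\infty\spcheck(R;\s)\ot_R J)=0$ for all $k>0$.

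Combining these two steps with Theorem~\ref{weak-proregularity-via-injectives} finishes the proof: $I$ is weakly proregular if and only if $H^k(K_\infty\spcheck(R;\s)\ot_R J)=0$ for every injective $R$\+module $J$ and every $k>0$, which by the second step is precisely the assertion that the canonical morphism $\Gamma_I(J)\to K_\infty\spcheck(R;\s)\ot_R J$ is a quasi-isomorphism for every injective~$J$. There is essentially no real obstacle here, the corollary being a reformulation of the theorem; the only point requiring genuine (if routine) verification is the functorial identification $H^0(K_\infty\spcheck(R;\s)\ot_R M)\cong\Gamma_I(M)$ together with the observation that the canonical morphism in question is exactly the resulting inclusion of $H^0$.
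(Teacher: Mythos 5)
Your proposal is correct and follows essentially the same route as the paper, which likewise deduces the corollary from Theorem~\ref{weak-proregularity-via-injectives} via the natural isomorphism $\Gamma_I(M)\simeq H^0(K_\infty\spcheck(R;\s)\ot_R M)$; you merely spell out the routine verification of that isomorphism and of the identification of the canonical morphism, which the paper leaves implicit.
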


\begin{proof}
 In fact, for any $R$\+module $M$ there is a natural isomorphism of
$R$\+modules $\Gamma_I(M)\simeq H^0(K_\infty\spcheck(R;\s)\ot_R M)$.
 Hence the corollary follows from
Theorem~\ref{weak-proregularity-via-injectives}.
\end{proof}

 The next proposition and theorem, providing dual versions of
Theorem~\ref{weak-proregularity-via-injectives} and
Corollary~\ref{when-weakly-proregular-in-terms-of-injectives},
are the main results of this section.

\begin{prop} \label{weak-proregularity-piece-by-piece}
 The ideal $I\subset R$ is weakly proregular if and only if, for
every $q<0$, the following two conditions hold:
\begin{enumerate}
\renewcommand{\theenumi}{\roman{enumi}}
\item $\varprojlim_{n\ge1}^1 H^q(K(R;\s^n)\ot_R R[X])=0$ for some
(equivalently, every) infinite set~$X$;
\item $\varprojlim_{n\ge1} H^q(K(R;\s^n)\ot_R R[X])=0$ for some
(equivalently, every) nonempty set~$X$.
\end{enumerate}
 Here $R[X]=R^{(X)}$ denotes the free $R$\+module with $X$~generators.
\end{prop}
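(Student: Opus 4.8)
The plan is to fix an integer $q<0$ and reduce everything to a statement about the single countable projective system of $R$\+modules $E_n:=H^q(K(R;\s^n))$, $n\ge1$, whose transition maps are induced by the standard maps of Koszul complexes $K(R;\s^{n+1})\rarrow K(R;\s^n)$.  By the definition recalled above, the ideal $I$ is weakly proregular if and only if $(E_n)_{n\ge1}$ is pro-zero for every $q<0$ (and only the finitely many values $-m\le q\le-1$ play any role, since $H^q(K(R;\s^n))$ vanishes outside $[-m,0]$).

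First I would record the elementary identification underlying the proposition.  Since $R[X]=R^{(X)}$, the complex $K(R;\s^n)\ot_R R[X]$ is the direct sum of $X$ copies of the complex $K(R;\s^n)$; as direct sums are exact in $R\modl$, passing to cohomology produces a natural isomorphism
$$
 H^q(K(R;\s^n)\ot_R R[X])\;\simeq\;H^q(K(R;\s^n))^{(X)}\;=\;E_n^{(X)},
$$
compatible with the transition maps in~$n$.  Hence, for a fixed~$q$, condition~(i) asserts precisely that $\varprojlim\nolimits_{n\ge1}^1 E_n^{(X)}=0$, and condition~(ii) asserts precisely that $\varprojlim\nolimits_{n\ge1} E_n^{(X)}=0$.

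Next I would feed this into the two lemmas already available.  By Lemma~\ref{emmanouil}, condition~(i) for some (equivalently, every) infinite set~$X$ is equivalent to the Mittag-Leffler condition on the system $(E_n)_{n\ge1}$, which simultaneously takes care of the ``equivalently, every'' clause in~(i).  For condition~(ii): choosing a point of a nonempty set~$X$ yields a split monomorphism of projective systems $(E_n)_{n\ge1}\hookrightarrow(E_n^{(X)})_{n\ge1}$, hence an injection $\varprojlim\nolimits_{n\ge1}E_n\hookrightarrow\varprojlim\nolimits_{n\ge1}E_n^{(X)}$; so condition~(ii) for some nonempty~$X$ forces $\varprojlim\nolimits_{n\ge1}E_n=0$, while conversely $\varprojlim\nolimits_{n\ge1}E_n=0$ is the special case $X=\{*\}$ of~(ii).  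Combining these observations, for a fixed~$q$ conditions~(i) and~(ii) both hold if and only if the system $(E_n)_{n\ge1}$ satisfies the Mittag-Leffler condition and has vanishing projective limit, which by Lemma~\ref{Mittag-Leffler+projlim-vanishing} is exactly the condition that $(E_n)_{n\ge1}$ be pro-zero.  Letting $q$ range over all negative integers then gives the asserted equivalence with weak proregularity.

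It remains to settle the ``equivalently, every'' clause in~(ii).  Here I would note that once $I$ is known to be weakly proregular, each system $(E_n)_{n\ge1}$ is pro-zero, hence so is each system $(E_n^{(X)})_{n\ge1}$ --- the eventual image of $E_i^{(X)}$ under the transition maps is the $X$\+fold direct sum of the eventual images of $E_i$, which are zero --- and therefore $\varprojlim\nolimits_{n\ge1}E_n^{(X)}=0$ for \emph{every} set~$X$; in view of the equivalence just established, this shows that~(ii) holding for one nonempty~$X$ is the same as~(ii) holding for all nonempty~$X$.  I do not anticipate a genuine obstacle: essentially all the content is carried by Lemmas~\ref{emmanouil} and~\ref{Mittag-Leffler+projlim-vanishing}, and the only points that need a little care are the exactness of direct sums (used to commute cohomology past $R[X]$), the split-monomorphism trick recovering a single copy of $(E_n)$ from $(E_n^{(X)})$, the direction and naturality of the Koszul transition maps, and the harmless restriction of~$q$ to the range $[-m,-1]$.
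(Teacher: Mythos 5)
Your argument is correct and is essentially the paper's own proof: both identify $H^q(K(R;\s^n)\ot_R R[X])$ with $H^q(K(R;\s^n))^{(X)}$ and then apply Lemma~\ref{emmanouil} (for condition~(i)) and Lemma~\ref{Mittag-Leffler+projlim-vanishing} to the projective system $E_n=H^q(K(R;\s^n))$. The only cosmetic difference is your detour through weak proregularity to justify the ``every nonempty $X$'' clause of~(ii); the paper gets this directly from the unconditional observation that $\varprojlim_{n}E_n=0$ if and only if $\varprojlim_{n}E_n^{(X)}=0$ for a nonempty set $X$ (one direction being your split monomorphism, the other the coordinatewise embedding $\varprojlim_{n}E_n^{(X)}\hookrightarrow(\varprojlim_{n}E_n)^X$), which keeps the per-$q$ equivalence of the ``some'' and ``every'' readings independent of condition~(i).
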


\begin{proof}
 Clearly, for any projective system $(E_n)_{n\ge1}$ and a nonempty
set $X$ one has $\varprojlim_{n\ge1}E_n=0$ if and only if
$\varprojlim_{n\ge1}E_n^{(X)}=0$.
 So the conditions in~(ii) are equivalent for all nonempty sets~$X$,
and they are equivalent to the condition~(ii) of
Lemma~\ref{Mittag-Leffler+projlim-vanishing} for the projective system
$E_n=H^q(K(R;\s^n))$.
 Furthermore, by Lemma~\ref{emmanouil}, the conditions in~(i) are
equivalent for all infinite sets~$X$, and they are equivalent to
the condition~(i) of Lemma~\ref{Mittag-Leffler+projlim-vanishing} for
the same projective system~$(E_n=H^q(K(R;\s^n)))_{n\ge1}$.
 The assertion of Lemma~\ref{Mittag-Leffler+projlim-vanishing} now
provides the desired result.
\end{proof}

 The following theorem can be also found in~\cite[Remark~7.8]{HS}.

\begin{thm} \label{when-weakly-proregular}
 The ideal $I\subset R$ is weakly proregular if and only if for some
(equivalently, for every) infinite set $X$, the canonical morphism
$$
 \boR\Hom_R(K_\infty\spcheck(R;\s),\>R[X])\lrarrow\Lambda_I(R[X])
$$
is an isomorphism in the derived category of $R$\+modules\/
$\sD(R\modl)$.
\end{thm}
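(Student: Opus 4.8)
The plan is to deduce the theorem from Proposition~\ref{weak-proregularity-piece-by-piece} by spelling out, in terms of derived projective limits, what it means for the canonical morphism to be an isomorphism in $\sD(R\modl)$. Fix an infinite set~$X$ and abbreviate $E_n^q=H^q(K(R;\s^n))$, so that $H^q(K(R;\s^n)\ot_R R[X])=(E_n^q)^{(X)}$ (the $R$\+module $R[X]$ being flat, and cohomology commuting with direct sums), and recall that $E_n^q=0$ unless $-m\le q\le0$. Since $\boR\Hom_R(K_\infty\spcheck(R;\s),R[X])$ is computed by applying $\Hom_R(-,R[X])$ to the telescope complex (a bounded complex of free, hence projective, $R$\+modules quasi-isomorphic to $K_\infty\spcheck(R;\s)$ and concentrated in the cohomological degrees $0,\dotsc,m$), it is represented by a complex concentrated in the degrees $-m,\dotsc,0$, so its cohomology automatically vanishes outside that range. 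Because the target $\Lambda_I(R[X])$ sits in degree~$0$, the canonical morphism from this source to it is an isomorphism in $\sD(R\modl)$ precisely when the source has cohomology only in degree~$0$ and the induced map on $H^0$ is an isomorphism. By~\eqref{delta-computed}, $H^0$ of the source is $\Delta_I(R[X])$; and, just as in the proof of Proposition~\ref{quotseparated-reflector}, the canonical morphism factors as the canonical truncation $\boR\Hom_R(K_\infty\spcheck(R;\s),R[X])\rarrow\Delta_I(R[X])$ followed by the natural surjection $b_{I,R[X]}\:\Delta_I(R[X])\rarrow\Lambda_I(R[X])$ of~\cite[Lemma~7.5]{Pcta}. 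Hence the canonical morphism is an isomorphism if and only if \textup{(a)}~$H^q\,\boR\Hom_R(K_\infty\spcheck(R;\s),R[X])=0$ for every $q<0$, and \textup{(b)}~the map $b_{I,R[X]}$ is an isomorphism.

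Next I would rewrite (a) and (b) by feeding $C=R[X]$ into the short exact sequences~\eqref{derived-projective-limit-sequence} and using~\eqref{lambda-computed}. For a fixed $q<0$, the instance of~\eqref{derived-projective-limit-sequence} in cohomological degree~$q$ shows that $H^q\,\boR\Hom_R(K_\infty\spcheck(R;\s),R[X])=0$ is equivalent to the two vanishings $\varprojlim^1_n(E_n^{q-1})^{(X)}=0$ and $\varprojlim_n(E_n^q)^{(X)}=0$. The instance in degree~$0$, in view of~\eqref{lambda-computed}, is a short exact sequence whose surjective term $\Delta_I(R[X])\rarrow\Lambda_I(R[X])$ is the map $b_{I,R[X]}$ and whose kernel is $\varprojlim^1_n(E_n^{-1})^{(X)}$, so (b) is equivalent to $\varprojlim^1_n(E_n^{-1})^{(X)}=0$. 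As $q$ ranges over the negative integers, $q-1$ ranges over the integers $\le-2$, so---adjoining the exponent $-1$ coming from (b)---the conjunction of (a) and (b) becomes exactly the assertion: for every $q<0$ one has $\varprojlim^1_n(E_n^q)^{(X)}=0$ and $\varprojlim_n(E_n^q)^{(X)}=0$. These are precisely conditions~(i) and~(ii) of Proposition~\ref{weak-proregularity-piece-by-piece} for the infinite set~$X$.

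Putting this together, the canonical morphism is an isomorphism in $\sD(R\modl)$ for the chosen infinite set~$X$ if and only if conditions~(i)--(ii) of Proposition~\ref{weak-proregularity-piece-by-piece} hold for~$X$; by that proposition this is equivalent to weak proregularity of~$I$. Since the latter condition does not refer to~$X$, this simultaneously establishes that the displayed isomorphism holds for some infinite~$X$ if and only if it holds for every infinite~$X$, and that either is equivalent to weak proregularity---which is exactly the statement of the theorem. I expect the one point requiring genuine care to be the identification in (b) of the induced map on $H^0$ with the surjection $b_{I,R[X]}$---equivalently, the recognition of the canonical morphism as the composite through $\Delta_I(R[X])$; granting this, everything else is bookkeeping with the Milnor-type sequences~\eqref{derived-projective-limit-sequence} and an appeal to Proposition~\ref{weak-proregularity-piece-by-piece}.
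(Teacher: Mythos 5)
Your proof is correct and follows essentially the same route as the paper, which deduces the theorem directly from the Milnor-type sequences~\eqref{derived-projective-limit-sequence}, the identification~\eqref{lambda-computed}, and Proposition~\ref{weak-proregularity-piece-by-piece}; you have merely written out the degree-by-degree bookkeeping that the paper leaves implicit. The one point you flag as delicate --- that the induced map on $H^0$ is the surjection of the degree-zero Milnor sequence, with kernel $\varprojlim^1_n H^{-1}(K(R;\s^n)\ot_R R[X])$ --- is exactly the identification the paper also relies on (compare the end of the proof of Theorem~\ref{quotseparated-I-contramodule-theorem}).
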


\begin{proof}
 Follows immediately from the short exact
sequences~\eqref{derived-projective-limit-sequence}
together with the isomorphism~\eqref{lambda-computed}
and Proposition~\ref{weak-proregularity-piece-by-piece}.
\end{proof}

\begin{cor} \label{all-quotseparated-over-weakly-proregular}
 If the ideal $I\subset R$ is weakly proregular, then
the full subcategories $R\modl_{I\ctra}^\qs$ and $R\modl_{I\ctra}$
coincide in $R\modl$.
\end{cor}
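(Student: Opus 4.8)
The plan is to deduce the corollary from Lemma~\ref{when-all-quotseparated}, which reduces the equality $R\modl_{I\ctra}^\qs=R\modl_{I\ctra}$ to checking that the natural surjective morphism $b_{I,R[X]}\:\Delta_I(R[X])\rarrow\Lambda_I(R[X])$ is an isomorphism for the countable set $X=\omega$, and then to obtain precisely this from Theorem~\ref{when-weakly-proregular}. First I would recall, from formula~\eqref{delta-computed}, that $\Delta_I(R[X])=H^0\,\boR\Hom_R(K_\infty\spcheck(R;\s),\>R[X])$, and from~\eqref{lambda-computed}, that $\Lambda_I(R[X])=\varprojlim_{n\ge1}H^0(K(R;\s^n)\ot_RR[X])$; the reflection morphism $b_{I,R[X]}$ is the canonical map between them, realized as the $q=0$ surjection in the short exact sequence~\eqref{derived-projective-limit-sequence} (by~\cite[Lemma~7.5]{Pcta}).

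Then, weak proregularity of $I$ together with Theorem~\ref{when-weakly-proregular} tells us that the canonical morphism
$$
 \boR\Hom_R(K_\infty\spcheck(R;\s),\>R[\omega])\lrarrow\Lambda_I(R[\omega])
$$
is an isomorphism in $\sD(R\modl)$. Passing to $H^0$ and using the identifications above, this is exactly the statement that $b_{I,R[\omega]}$ is an isomorphism, so Lemma~\ref{when-all-quotseparated} applies and yields $R\modl_{I\ctra}^\qs=R\modl_{I\ctra}$. (Equivalently, and more explicitly, one can observe that the kernel of $b_{I,R[X]}$ is $\varprojlim_{n\ge1}^1H^{-1}(K(R;\s^n)\ot_RR[X])$, which vanishes for $X=\omega$ by condition~(i) of Proposition~\ref{weak-proregularity-piece-by-piece} with $q=-1$; since $b_{I,R[X]}$ is always surjective, this makes it an isomorphism.)

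No serious obstacle is expected, as Theorem~\ref{when-weakly-proregular} and Lemma~\ref{when-all-quotseparated} already carry the whole weight. The only point deserving care is the bookkeeping that identifies the $H^0$ of the canonical morphism $\boR\Hom_R(K_\infty\spcheck(R;\s),\>R[\omega])\rarrow\Lambda_I(R[\omega])$ with the reflection $b_{I,R[\omega]}$, so that the abstract $R$\+module isomorphism $\Delta_I(R[\omega])\simeq\Lambda_I(R[\omega])$ produced by Theorem~\ref{when-weakly-proregular} is indeed implemented by the specific natural transformation whose invertibility Lemma~\ref{when-all-quotseparated} requires; this compatibility is furnished by~\cite[Lemma~7.5]{Pcta} and the construction underlying the exact sequence~\eqref{derived-projective-limit-sequence}.
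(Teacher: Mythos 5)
Your proposal is correct and follows essentially the same route as the paper, which likewise deduces the corollary from Lemma~\ref{when-all-quotseparated}, the isomorphism~\eqref{delta-computed}, and Theorem~\ref{when-weakly-proregular}. The extra care you take in identifying the $H^0$ of the canonical morphism with the reflection~$b_{I,R[X]}$ (via the exact sequence~\eqref{derived-projective-limit-sequence}) is exactly the bookkeeping the paper's terse proof leaves implicit.
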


\begin{proof}
 Follows from Lemma~\ref{when-all-quotseparated},
the isomorphism~\eqref{delta-computed}, and
Theorem~\ref{when-weakly-proregular}.
\end{proof}

\begin{rem} \label{main-remark}
 The converse assertion to
Corollary~\ref{all-quotseparated-over-weakly-proregular} is
\emph{not} true: the condition that the two full subcategories
$R\modl_{I\ctra}^\qs$ and $R\modl_{I\ctra}$ in $R\modl$ coincide is
weaker than the weak proregularity of the ideal~$I$.
 In fact, Proposition~\ref{weak-proregularity-piece-by-piece}
splits the weak proregularity condition for a sequence of elements
$s_1$,~\dots, $s_m\in R$ into $2m$~pieces.
 Precisely one of these $2m$~conditions, namely, condition~(i)
for $q=-1$, is equivalent to every $I$\+contramodule $R$\+module
being quotseparated.

 So, what do the remaining $2m-1$ conditions do?
 As we will see in Section~\ref{full-and-faithfulness-secn},
the answer is that condition~(ii) for all $q\le -1$ and condition~(i)
for all $q\le -2$ hold together if and only if the triangulated functor
$\sD(R\modl_{I\ctra})\rarrow\sD(R\modl)$ induced by the inclusion of
abelian categories $R\modl_{I\ctra}\rarrow R\modl$ is fully faithful.
 It follows that the ideal $I\subset R$ is weakly proregular (i.~e.,
all the $2m$~conditions hold) if and only if the triangulated functor
$\sD(R\modl_{I\ctra}^\qs)\rarrow\sD(R\modl)$ induced by the inclusion
of abelian categories $R\modl_{I\ctra}^\qs\rarrow R\modl$
is fully faithful.

 The special case of $m=1$ is instructive.
 A principal ideal $I=(s)$ is weakly proregular if and only if
the $s$\+torsion in the ring $R$ is bounded, that is, there exists
an integer $n_0\ge1$ such that for any integer $n\ge1$ and any
element $r\in R$ the equation $s^nr=0$ implies $s^{n_0}r=0$.
 This condition splits into two conditions~(i) and~(ii) of
Proposition~\ref{weak-proregularity-piece-by-piece} in the following
way.

 Given an $R$\+module $M$, the \emph{$s$\+torsion submodule} of $M$
consists of all the elements $m\in M$ for which there exists $n\ge1$
such that $s^nm=0$ in $M$.
 We say that an $R$\+module $M$ has \emph{no divisible $s$\+torsion} if
the module $\Hom_R(R[s^{-1}]/R,\>M)$ vanishes
(where $R[s^{-1}]/R$ is a simplified notation for the cokernel of
the localization map $R\rarrow R[s^{-1}]$).
 The \emph{divisible $s$\+torsion submodule} of $M$ consists of all
elements belonging to (the union of) the images of $R$\+module maps
$R[s^{-1}]/R\rarrow M$.
 Finally, we say that \emph{the nondivisible $s$\+torsion in $M$ is
bounded} if there exists $n_0\ge1$ such that the $s$\+torsion
submodule in $M$ is the sum of the divisible $s$\+torsion submodule
and the kernel of the map $s^{n_0}\:M\rarrow M$.

 Then condition~(i) means that the nondivisible $s$\+torsion in
the $R$\+module $R$ is bounded, while condition~(ii) is equivalent
to $R$ having no divisible $s$\+torsion.
 So one has $R\modl_{s\ctra}=R\modl_{s\ctra}^\qs$ if and only if
the nondivisible $s$\+torsion in $R$ is bounded, while the functor
$\sD(R\modl_{s\ctra})\rarrow\sD(R\modl)$ is fully faithful if and
only if $R$ has no divisible $s$\+torsion.
 The functor $\sD(R\modl_{I\ctra}^\qs)\rarrow\sD(R\modl)$ is fully
faithful if and only if $R$ has bounded $s$\+torsion.
\end{rem}

\Section{Full-and-Faithfulness of Triangulated Functors \\
and Weak Proregularity}  \label{full-and-faithfulness-secn}

 We prove several basic theorems in this section before proceeding
to discuss derived $I$\+adically complete complexes in the idealistic
sense in the next one.

 To begin with, here is the $I$\+torsion version of the main theorem.
 Let us introduce some notation.
 Let $\star=\bs$, $+$, $-$, or~$\varnothing$ be a bounded or unbounded
derived category symbol.
 Denote the triangulated functor $\sD^\star(R\modl_{I\tors})\rarrow
\sD^\star(R\modl)$ induced by the inclusion of abelian categories
$R\modl_{I\tors}\rarrow R\modl$ by
$$
 \mu^\star\:\sD^\star(R\modl_{I\tors})\lrarrow
 \sD^\star(R\modl).
$$
 The functor~$\mu^\star$ obviously factorizes into a composition
\begin{equation} \label{mu-chi-upsilon-factorization}
 \sD^\star(R\modl_{I\tors})\overset{\chi^\star}\lrarrow
 \sD_{I\tors}^\star(R\modl)\overset{\upsilon^\star}\lrarrow
 \sD(R\modl),
\end{equation}
where $\upsilon^\star$~is the canonical fully faithful inclusion.

 In the proofs of the theorems in this section, we will use the concept
of a partially defined adjoint functor.
 Given two categories $\sC$, $\sD$ and a functor $F\:\sC
\rarrow\sD$, we say that a \emph{partial} functor $G$ right adjoint
to $F$ \emph{is defined} on an object $D\in\sD$ if there exists
an object $G(D)\in\sC$ such that for every object $C\in\sC$ there is
a bijection of sets $\Hom_\sC(C,G(D))\simeq\Hom_\sD(F(C),D)$
functorial in the object $C\in\sC$.
 Since an object representing a functor is unique up to a unique
isomorphism, the object $G(D)$ is unique if it exists.
 Partial left adjoint functors are defined similarly.

\begin{thm} \label{I-torsion-theorem}
 If the ideal $I\subset R$ is weakly proregular, then for every
symbol\/ $\star=\bs$, $+$, $-$, or\/~$\varnothing$, the functor
$\chi^\star\:\sD^\star(R\modl_{I\tors})\rarrow
\sD_{I\tors}^\star(R\modl)$ is an equivalence of categories and
the functor $\mu^\star\:\sD^\star(R\modl_{I\tors})\rarrow
\sD^\star(R\modl)$ is fully faithful.

 Conversely, if for one of the symbols\/ $\star=\bs$, $+$, $-$,
or\/~$\varnothing$ the functor $\mu^\star$ is fully faithful, then
the ideal $I\subset R$ is weakly proregular.
\end{thm}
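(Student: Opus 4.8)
The plan is to build a quasi-inverse to $\chi^\star$ out of a derived $I$\+torsion functor, read off full-and-faithfulness of $\mu^\star$ from the resulting adjunction, and settle the converse by a uniqueness-of-adjoints argument. First I would set up the derived torsion functor: for a complex $M^\bu$ of $R$\+modules choose a homotopy injective resolution $M^\bu\rarrow\mathcal J^\bu$ whose terms $\mathcal J^q$ are injective $R$\+modules (available since $R\modl$ is Grothendieck) and put $\boR\Gamma_I(M^\bu)=\Gamma_I(\mathcal J^\bu)$; as $\Gamma_I$ takes values in $I$\+torsion modules this is a complex over $R\modl_{I\tors}$. The key point is that $\Gamma_I$ is right adjoint to the \emph{exact} fully faithful inclusion $R\modl_{I\tors}\rarrow R\modl$, so that for any complex $T^\bu$ over $R\modl_{I\tors}$ the termwise adjunction gives an equality of Hom complexes $\Hom_R^\bu(T^\bu,\mathcal J^\bu)=\Hom_{R\modl_{I\tors}}^\bu(T^\bu,\Gamma_I\mathcal J^\bu)$, while $\Gamma_I\mathcal J^\bu$ is again homotopy injective in $R\modl_{I\tors}$ because the inclusion carries acyclic complexes to acyclic complexes. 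Together these show that $\boR\Gamma_I$ is right adjoint to $\mu^\star$ (landing in $\sD(R\modl_{I\tors})$; it respects the decoration $\star$, except that for $\star=\bs$ or $-$ the output need only be bounded below, which does no harm).

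Now assume $I$ is weakly proregular. The technical core is to promote Corollary~\ref{when-weakly-proregular-in-terms-of-injectives} from injective modules to \emph{complexes} of injectives: for any complex $\mathcal J^\bu$ of injective $R$\+modules, the natural chain map $\Gamma_I(\mathcal J^\bu)\rarrow K_\infty\spcheck(R;\s)\ot_R\mathcal J^\bu$ (which in each degree is the inclusion of $\Gamma_I(\mathcal J^q)=\ker(\mathcal J^q\to\bigoplus_jR[s_j^{-1}]\ot_R\mathcal J^q)$ into the degree\+$0$ term) is a quasi-isomorphism. I would prove this with the spectral sequence of the double complex $K_\infty\spcheck(R;\s)\ot_R\mathcal J^\bu$, which is bounded (of width $m+1$) in the Koszul direction, so the relevant spectral sequence converges even for unbounded $\mathcal J^\bu$: computing cohomology first along the $K_\infty\spcheck$\+direction and applying Theorem~\ref{weak-proregularity-via-injectives} termwise collapses the first page onto $\Gamma_I\mathcal J^\bu$, and the comparison map visibly induces an isomorphism there. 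Granting this, for any complex $T^\bu$ over $R\modl_{I\tors}$ (resolved by $\mathcal J^\bu$ as above) one gets $\boR\Gamma_I(T^\bu)=\Gamma_I\mathcal J^\bu\simeq K_\infty\spcheck(R;\s)\ot_R\mathcal J^\bu\simeq K_\infty\spcheck(R;\s)\ot_RT^\bu$ (using that $K_\infty\spcheck(R;\s)$ is a bounded complex of flats), and $K_\infty\spcheck(R;\s)\ot_RT^\bu\rarrow T^\bu$ is a quasi-isomorphism by Lemma~\ref{sequentially-torsion} since $T^\bu$ has $I$\+torsion cohomology. A diagram chase identifies this composite with the unit of the adjunction $\mu^\star\dashv\boR\Gamma_I$ at $T^\bu$; so the unit is invertible, $\mu^\star$ is fully faithful, and hence so is $\chi^\star$ (because $\upsilon^\star$ is).

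For essential surjectivity of $\chi^\star$, apply the same computation to an arbitrary $M^\bu\in\sD^\star_{I\tors}(R\modl)$: now $\mathcal J^\bu$ has $I$\+torsion cohomology, so $M^\bu\simeq K_\infty\spcheck(R;\s)\ot_RM^\bu\simeq\Gamma_I\mathcal J^\bu$, a complex of $I$\+torsion modules of the correct boundedness; thus $\chi^\star$ is an equivalence, finishing the first half. For the converse, suppose $\mu^\star$ is fully faithful for some $\star$. Restricting along the fully faithful inclusions $\sD^\bs\hookrightarrow\sD^\star$ makes $\mu^\bs$, hence $\chi^\bs$, fully faithful; an induction on cohomological amplitude — in which full-and-faithfulness of $\chi^\bs$ is used to realize the connecting morphisms of the canonical truncations inside $\sD^\bs(R\modl_{I\tors})$ — then shows $\chi^\bs$ is essentially surjective, so an equivalence. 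Consequently $\mu^\bs=\upsilon^\bs\circ\chi^\bs$ has as a right adjoint both $\boR\Gamma_I$ and $(\chi^\bs)^{-1}\circ\bigl(K_\infty\spcheck(R;\s)\ot_R{-}\bigr)$, the latter because $K_\infty\spcheck(R;\s)\ot_R{-}$ is the coreflector onto $\sD^\bs_{I\tors}(R\modl)$ from Section~\ref{seq-secn}; by uniqueness of adjoints the two agree, and evaluating at an injective $R$\+module $J$ placed in degree~$0$ gives $\Gamma_I(J)\simeq K_\infty\spcheck(R;\s)\ot_RJ$ in $\sD(R\modl)$. Hence $H^k(K_\infty\spcheck(R;\s)\ot_RJ)=0$ for every $k>0$ and every injective $J$, and Theorem~\ref{weak-proregularity-via-injectives} yields weak proregularity of~$I$.

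The one genuinely delicate point is the spectral-sequence step of the second paragraph: everything else is a formal game with the adjunction $\mu^\star\dashv\boR\Gamma_I$ and with triangulated truncations, but one has to verify carefully that the bicomplex $K_\infty\spcheck(R;\s)\ot_R\mathcal J^\bu$ is bounded in the Koszul direction so that the spectral sequence converges when $\mathcal J^\bu$ is unbounded, and it is precisely here — and only here — that weak proregularity, via Theorem~\ref{weak-proregularity-via-injectives}, is actually invoked.
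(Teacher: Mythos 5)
Your argument is correct. The converse half is essentially the paper's own proof: reduce to $\star=\bs$, observe that full-and-faithfulness of $\mu^\bs$ forces $\chi^\bs$ to be an equivalence, match the (partially defined) right adjoint $\gamma^\bs$, given on an injective module $J$ by $\Gamma_I(J)$, against the coreflector $\theta^\bs=K_\infty\spcheck(R;\s)\ot_R{-}$ by uniqueness of adjoints, and feed the resulting isomorphism $\Gamma_I(J)\simeq K_\infty\spcheck(R;\s)\ot_RJ$ into Theorem~\ref{weak-proregularity-via-injectives}. Where you genuinely diverge is the direct half: the paper simply cites \cite[Theorem~1.3 and Corollary~1.4]{Pmgm}, whereas you give a self-contained proof by exhibiting $\boR\Gamma_I$ as the right adjoint of $\mu^\star$ via homotopy injective resolutions and then upgrading Corollary~\ref{when-weakly-proregular-in-terms-of-injectives} from injective modules to complexes of injectives, so that the unit of the adjunction becomes the composite of the three quasi-isomorphisms $\Gamma_I(\mathcal J^\bu)\rarrow K_\infty\spcheck(R;\s)\ot_R\mathcal J^\bu\larrow K_\infty\spcheck(R;\s)\ot_RT^\bu\rarrow T^\bu$. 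This buys transparency — weak proregularity is visibly used exactly once, in the termwise collapse of the Koszul-direction cohomology — at the cost of the one delicate convergence point you flag. That point does go through: because the double complex occupies only the Koszul degrees $0,\dotsc,m$, the row filtration is finite \emph{in each total degree}, so the spectral sequence taking Koszul cohomology first converges for unbounded $\mathcal J^\bu$ (equivalently, one may quote the acyclic assembly lemma for a double complex with finitely many columns and exact rows); just be aware that the \emph{column} filtration, though finite, computes the cohomologies in the wrong order, so the phrase ``the filtration is finite'' should be attached to the right one. Two further small points, neither a gap: for $\star=\bs$ the adjoint $\boR\Gamma_I$ is only partially defined on $\sD^\bs$ (your evaluation at a single injective module in degree~$0$ is exactly how the paper circumvents this), and in the essential surjectivity step one should truncate $\Gamma_I(\mathcal J^\bu)$ to land in the correct bounded derived category, which is harmless since its cohomology agrees with that of $M^\bu$.
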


\begin{proof}
 The direct assertion is the result of~\cite[Theorem~1.3 and
Corollary~1.4]{Pmgm}.
 Let us prove the converse.
 Clearly, if the functor $\mu^\star$ is fully faithful for one of
the symbols $\star=\bs$, $+$, $-$, or~$\varnothing$, then it is
fully faithful for $\star=\bs$.
 It is also clear that if the functor~$\mu^\bs$ is fully faithful,
then its essential image coincides with the full subcategory
$\sD^\bs_{I\tors}(R\modl)\subset\sD^\bs(R\modl)$; so
the functor~$\chi^\bs$ is a category equivalence in this case.

 Now the functor of inclusion of abelian categories $R\modl_{I\tors}
\rarrow R\modl$ has a right adjoint functor $\Gamma_I\:
R\modl\rarrow R\modl_{I\tors}$.
 Being right adjoint to an exact functor, the functor $\Gamma_I$
takes injective $R$\+modules to injective objects of the category
of $I$\+torsion $R$\+modules.
 One easily concludes that the functor $\mu^+\:\sD^+(R\modl_{I\tors})
\rarrow\sD^+(R\modl)$ has a right adjoint functor
$\gamma^+\:\sD^+(R\modl)\rarrow\sD^+(R\modl_{I\tors})$, which is
computable as the right derived functor of the functor~$\Gamma_I$.
 So the functor~$\gamma^+$ assigns to a bounded below complex of
injective $R$\+modules $J^\bu$ the bounded below complex of
injective $I$\+torsion $R$\+modules $\Gamma_I(J^\bu)$.
 Similarly, for $\star=\varnothing$, the functor
$\mu\:\sD(R\modl_{I\tors})\rarrow\sD(R\modl)$ has a right adjoint
functor $\gamma\:\sD(R\modl)\rarrow\sD(R\modl_{I\tors})$, which can be
computed by applying the functor $\Gamma_I$ to homotopy injective
complexes of $R$\+modules (known also as ``K\+injective
complexes''~\cite{Spa}).

 For $\star=\bs$, the functor $\mu^\bs\:\sD^\bs(R\modl_{I\tors})\rarrow
\sD^\bs(R\modl)$ does not seem to necessarily have a right adjoint, in
general; but we are interested in its partially defined right adjoint
functor~$\gamma^\bs$.
 It is only important for us that the partial functor~$\gamma^\bs$ is
defined on injective $R$\+modules $J\in R\modl_\inj\subset R\modl
\subset\sD^\bs(R\modl)$; indeed, one has $\gamma^\bs(J)=\Gamma_I(J)\in
R\modl_{I\tors}\subset\sD^\bs(R\modl_{I\tors})$, as can be easily seen.

 On the other hand, for any symbol $\star=\bs$, $+$, $-$,
or~$\varnothing$, the fully faithful inclusion functor
$\upsilon^\star\:\sD^\star_{I\tors}(R\modl)\rarrow
\sD^\star(R\modl)$ has a right adjoint functor
$\theta^\star\:\sD^\star(R\modl)\rarrow\sD^\star_{I\tors}(R\modl)$.
 This holds quite generally for any finitely generated ideal
$I\subset R$; following the discussion in Section~\ref{seq-secn}
and~\cite[Section~3]{Pmgm}, the functor~$\theta^\star$ is computable as
$$
 \theta^\star(M^\bu)=K_\infty\spcheck(R;\s)\ot_R M^\bu
 \quad\text{for all $M^\bu\in\sD^\star(R\modl)$}.
$$

 Finally, if the functor $\chi^\bs\:\sD^\bs(R\modl_{I\tors})\rarrow
\sD^\bs_{I\tors}(R\modl)$ is a category equivalence, then it
identifies the functor~$\mu^\bs$ with the functor~$\upsilon^\bs$;
and consequently it also identifies their adjoint functors
$\gamma^\bs$ and~$\theta^\bs$.
 It follows that the functor~$\gamma^\bs$ is everywhere defined in
this case; but this is not important for us.
 The key observation is that the objects
$\gamma^\bs(J)\in\sD^\bs(R\modl_{I\tors})$ and
$\theta^\bs(J)\in\sD^\bs_{I\tors}(R\modl)$ are identified by
the functor~$\chi^\bs$, for any injective $R$\+module~$J$.
 In other words, we have an isomorphism of complexes of $R$\+modules
$$
 \Gamma_I(J)\simeq K_\infty\spcheck(R;\s)\ot_RJ.
$$

 Hence $H^k(K_\infty\spcheck(R;\s)\ot_RJ)=0$ for all $k>0$.
 According to Theorem~\ref{weak-proregularity-via-injectives},
it follows that the ideal $I\subset R$ is weakly proregular.
\end{proof}

 The following quotseparated $I$\+contramodule theorem is the main
result of this section.
 In order to formulate and prove it, we need some further notation.
 Denote the triangulated functor $\sD^\star(R\modl_{I\ctra}^\qs)
\rarrow\sD^\star(R\modl)$ induced by the inclusion of the abelian
categories $R\modl_{I\ctra}^\qs\rarrow R\modl$ by
$$
 \rho^\star\:\sD^\star(R\modl_{I\ctra}^\qs)\lrarrow
 \sD^\star(R\modl).
$$
 The functor~$\rho^\star$ obviously factorizes into a composition
\begin{equation} \label{rho-xi-iota-factorization}
 \sD^\star(R\modl_{I\ctra}^\qs)\overset{\xi^\star}\lrarrow
 \sD^\star_{I\ctra}(R\modl)\overset{\iota^\star}\lrarrow
 \sD^\star(R\modl),
\end{equation}
where $\iota^\star$~is the canonical fully faithful inclusion.

\begin{thm} \label{quotseparated-I-contramodule-theorem}
 If the ideal $I\subset R$ is weakly proregular, then for every
symbol\/ $\star=\bs$, $+$, $-$, or\/~$\varnothing$, the functor
$\xi^\star\:\sD^\star(R\modl_{I\ctra}^\qs)\rarrow
\sD^\star_{I\ctra}(R\modl)$ is an equivalence of categories and
the functor $\rho^\star\:\sD^\star(R\modl_{I\ctra}^\qs)\rarrow
\sD^\star(R\modl)$ is fully faithful.

 Conversely, if for one of the symbols\/ $\star=\bs$, $+$, $-$,
or\/~$\varnothing$ the functor~$\rho^\star$ is fully faithful,
then the ideal $I\subset R$ is weakly proregular.
\end{thm}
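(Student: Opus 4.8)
The plan is to dispose of the direct assertion quickly and put the work into the converse, which I would prove by dualizing the proof of Theorem~\ref{I-torsion-theorem} (injective $R$\+modules and right adjoints being replaced by projective $R$\+modules and left adjoints throughout). For the direct assertion: under weak proregularity Corollary~\ref{all-quotseparated-over-weakly-proregular} identifies the full subcategory $R\modl_{I\ctra}^\qs$ with $R\modl_{I\ctra}$ inside $R\modl$, so both claims reduce to the corresponding full\+and\+faithfulness statement for the abelian category $R\modl_{I\ctra}$, which is dual to the direct assertion of Theorem~\ref{I-torsion-theorem} (cf.~\cite{Pmgm}).

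For the converse, I would first reduce to $\star=\bs$, since full\+and\+faithfulness of $\rho^\star$ for any symbol entails that of $\rho^\bs$. In the factorization $\rho^\bs=\iota^\bs\circ\xi^\bs$ with $\iota^\bs$ fully faithful, full\+and\+faithfulness of $\rho^\bs$ forces that of $\xi^\bs$; and then $\xi^\bs$ is automatically essentially surjective: being fully faithful and triangulated, its essential image is a strictly full triangulated subcategory of $\sD^\bs_{I\ctra}(R\modl)$ containing $R\modl_{I\ctra}^\qs$, hence --- by Proposition~\ref{extension-of-two} and the canonical truncation triangles --- containing every bounded complex of $R$\+modules with $I$\+contramodule cohomology. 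Thus $\xi^\bs$ is an equivalence of categories and the essential image of $\rho^\bs$ is all of $\sD^\bs_{I\ctra}(R\modl)$.

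Next I would exhibit the relevant adjoints. By Proposition~\ref{quotseparated-reflector}(c) the inclusion $R\modl_{I\ctra}^\qs\rarrow R\modl$ has the left adjoint $\boL_0\Lambda_I$, which --- being left adjoint to an exact functor --- sends projective $R$\+modules to projective objects of $R\modl_{I\ctra}^\qs$ and agrees with $\Lambda_I$ on projective $R$\+modules. Combining these two facts with the adjunction of Proposition~\ref{quotseparated-reflector}(c), one checks that $\rho^\bs$ has a partially defined left adjoint $\phi^\bs$, defined on every projective $R$\+module $P$ by $\phi^\bs(P)=\Lambda_I(P)\in R\modl_{I\ctra}^\qs$, with adjunction unit the $I$\+adic completion map $P\rarrow\Lambda_I(P)$. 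Dually to the coreflector $\theta^\star$ in the proof of Theorem~\ref{I-torsion-theorem}, the fully faithful inclusion $\iota^\bs$ has an everywhere defined left adjoint --- the sequential derived $I$\+adic completion functor $C^\bu\longmapsto\boR\Hom_R(K_\infty\spcheck(R;\s),C^\bu)$ of Section~\ref{seq-secn} --- with adjunction unit the canonical morphism $P\rarrow\boR\Hom_R(K_\infty\spcheck(R;\s),P)$.

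Finally, since $\xi^\bs$ is an equivalence carrying $\rho^\bs$ to $\iota^\bs$, it carries $\phi^\bs$ to $\boR\Hom_R(K_\infty\spcheck(R;\s),{-})$ and matches the adjunction units; for a projective $R$\+module $P$ this produces an isomorphism $\boR\Hom_R(K_\infty\spcheck(R;\s),P)\simeq\Lambda_I(P)$ in $\sD(R\modl)$ under which the canonical morphism from $P$ corresponds to the $I$\+adic completion map $P\rarrow\Lambda_I(P)$. Because $\Lambda_I(P)$ lies in $\sD^\bs_{I\ctra}(R\modl)$, the $I$\+adic completion map factors through the sequential completion morphism in a unique way, so this isomorphism must coincide with the canonical comparison map of Theorem~\ref{when-weakly-proregular}; taking $P=R[X]$ for an infinite set $X$ and applying Theorem~\ref{when-weakly-proregular} yields weak proregularity of $I$. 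The hard part --- and the point of departure from the $I$\+torsion case, where Theorem~\ref{weak-proregularity-via-injectives} demanded only an abstract vanishing $H^k=0$ --- is exactly this last matching: Theorem~\ref{when-weakly-proregular} requires the \emph{specific} canonical morphism to be invertible, so one cannot stop at an abstract isomorphism and must verify that the isomorphism obtained by transporting the adjunctions along $\xi^\bs$ is genuinely the canonical one. I expect this to be the main obstacle; it can be settled either by the uniqueness of the factorization through the sequential completion unit, or --- equivalently --- by the recollement orthogonality $\Hom_{\sD(R\modl)}(\boL j_!({-}),N^\bu)=0$ for $N^\bu\in\sD_{I\ctra}(R\modl)=\ker j^!$ of Section~\ref{seq-secn}. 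As a cross\+check, the case $q=-1$ of Proposition~\ref{weak-proregularity-piece-by-piece} also drops out directly: full\+and\+faithfulness of $\rho^\bs$ forces $R\modl_{I\ctra}^\qs=R\modl_{I\ctra}$ (via the agreement of $\Ext^1$ in the two categories and Proposition~\ref{extension-of-two}), whence $b_{I,R[X]}$ is an isomorphism by Lemma~\ref{when-all-quotseparated}, and the remaining conditions follow from the abstract isomorphism above through the short exact sequences~\eqref{derived-projective-limit-sequence}.
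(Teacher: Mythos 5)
Your proposal is correct and follows essentially the same route as the paper's proof: reduction to $\star=\bs$, essential surjectivity of $\xi^\bs$ via Proposition~\ref{extension-of-two} and truncations, identification of the partial left adjoint $P\longmapsto\Lambda_I(P)$ of $\rho^\bs$ with the left adjoint $\boR\Hom_R(K_\infty\spcheck(R;\s),{-})$ of $\iota^\bs$ under the equivalence $\xi^\bs$, and conclusion via Theorem~\ref{when-weakly-proregular}. The only divergence is at the final matching step you flag as the main obstacle: the paper sidesteps the ``abstract versus canonical isomorphism'' issue by observing that the resulting abstract isomorphism $\Delta_I(P)\simeq\Lambda_I(P)$ on $H^0$ forces $\Delta_I(P)$ to be $I$\+adically separated, whence the canonical surjection $b_{I,P}$ is automatically an isomorphism and the vanishing of $H^k$ for $k<0$ does the rest --- while your uniqueness-of-factorization argument through the reflector unit is an equally valid way to pin down the canonical morphism.
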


\begin{proof}
 If the ideal $I$ is weakly proregular, then we have
$R\modl_{I\ctra}^\qs=R\modl_{I\ctra}$ by
Corollary~\ref{all-quotseparated-over-weakly-proregular}.
 Having made this observation, it remains to refer
to~\cite[Theorem~2.9 and Corollary~2.10]{Pmgm} for the proof of
the direct assertion.
 
 To prove the converse, we argue similarly (or rather,
dual-analogously) to the proof of Theorem~\ref{I-torsion-theorem}.
 Clearly, if the functor~$\rho^\star$ is fully faithful for one of
the symbols $\star=\bs$, $+$, $-$, or~$\varnothing$, then it is
fully faithful for $\star=\bs$.
 Furthermore, if the functor~$\rho^\bs$ is fully faithful,
then its essential image coincides with the full subcategory
$\sD^\bs_{I\ctra}(R\modl)\subset\sD^\bs(R\modl)$, because
the triangulated category $\sD^\bs(R\modl_{I\ctra}^\qs)$ is
generated by its abelian subcategory $R\modl_{I\ctra}^\qs$ and
the triangulated subcategory in $\sD^\bs(R\modl)$ generated by
$R\modl_{I\ctra}^\qs$ is precisely $\sD^\bs_{I\ctra}(R\modl)$
(in view of Lemma~\ref{I-contramodule-R-modules-lemma}
and Proposition~\ref{extension-of-two}).
 So the functor~$\xi^\bs$ is an equivalence of categories in this case.

 Now the functor of inclusion of abelian categories
$R\modl_{I\ctra}^\qs\rarrow R\modl$ has a left adjoint functor
$\boL_0\Lambda_I\:R\modl\rarrow R\modl_{I\ctra}^\qs$
(see Proposition~\ref{quotseparated-reflector}(c)).
 The functor $\boL_0\Lambda_I$ is left adjoint to an exact
functor, so it takes projective $R$\+modules to projective objects
of the category $R\modl_{I\ctra}^\qs$.
 One easily concludes that the functor $\rho^-\:
\sD^-(R\modl_{I\ctra}^\qs)\rarrow\sD^-(R\modl)$ has a left adjoint
functor $\lambda^-\:\sD^-(R\modl)\rarrow\sD^-(R\modl_{I\ctra}^\qs)$,
which is computable as the left derived functor of the functor
$\boL_0\Lambda_I$, or which is the same, the left derived functor
of the functor~$\Lambda_I$.
 So the functor~$\lambda^-$ assigns to a bounded above complex of
projective $R$\+modules $P^\bu$ the bounded above complex of projective
quotseparated $I$\+contramodule $R$\+modules $\Lambda_I(P^\bu)$.

 More generally, for $\star=\varnothing$, the functor
$$
 \rho\:\sD(R\modl_{I\ctra}^\qs)\lrarrow\sD(R\modl)
$$
also has a left adjoint functor
$$
 \lambda\:\sD(R\modl)\lrarrow\sD(R\modl_{I\ctra}^\qs),
$$
which can be computed by applying the functor $\Lambda_I$ to
homotopy projective complexes of $R$\+modules (known also as
``K\+projective complexes''~\cite{Spa}).
 Following~\cite[Proposition~3.6]{PSY}, one can also compute
the functor~$\lambda$ by applying the functor~$\Lambda_I$
to homotopy flat (``K\+flat'') complexes of $R$\+modules.

 For $\star=\bs$, the functor $\rho^\bs\:\sD^\bs(R\modl_{I\tors})\rarrow
\sD^\bs(R\modl)$ does not seem to necessarily have a left adjoint,
in general; so we are interested in its partially defined left
adjoint functor~$\lambda^\bs$.
 It is only important for us that the partial functor~$\lambda^\bs$
is defined on projective $R$\+modules $P\in R\modl_\proj\subset
R\modl\subset\sD^\bs(R\modl)$; indeed, one has $\lambda^\bs(P)=
\Lambda_I(P)\in R\modl_{I\ctra}^\qs\subset\sD^\bs(R\modl_{I\ctra}^\qs)$,
as can be easily seen.

 On the other hand, for any symbol $\star=\bs$, $+$, $-$, or
$\varnothing$, the fully faithful inclusion functor
$\iota^\star\:\sD^\star_{I\ctra}(R\modl)\rarrow\sD^\star(R\modl)$ has
a left adjoint functor $\eta^\star\:\sD^\star(R\modl)\rarrow
\sD^\star_{I\ctra}(R\modl)$.
 This holds quite generally for any finitely generated ideal
$I\subset R$; following the discussion in Section~\ref{seq-secn}
and~\cite[Section~3]{Pmgm}, the functor~$\eta^\star$ is computable as
$$
 \eta^\star(C^\bu)=\boR\Hom_R(K_\infty\spcheck(R;\s),C^\bu)
 \quad\text{for all $C^\bu\in\sD^\star(R\modl)$}.
$$

 Finally, if the functor $\xi^\bs:\sD^\bs(R\modl_{I\ctra}^\qs)\rarrow
\sD^\bs_{I\ctra}(R\modl)$ is a category equivalence, then it identifies
the functor~$\rho^\bs$ with the functor~$\iota^\bs$; and consequently
it also identifies their adjoint functors~$\lambda^\bs$ and~$\eta^\bs$.
 It follows that the functor~$\lambda^\bs$ is everywhere defined in
this case; but this is not important for us.
 The key observation is that the objects $\lambda^\bs(P)\in
\sD^\bs(R\modl_{I\ctra}^\qs)$ and $\eta^\bs(P)\in
\sD^\bs_{I\ctra}(R\modl)$ are identified by the functor~$\xi^\bs$,
for any projective $R$\+module~$P$.
 In other words, we have an isomorphism of complexes of $R$\+modules
$$
 \Lambda_I(P)\simeq\boR\Hom_R(K_\infty\spcheck(R;\s),P).
$$

 Hence $H^k\,\boR\Hom_R(K_\infty\spcheck(R;\s),P)=0$ for $k<0$ and
$\Lambda_I(P)\simeq\Delta_I(P)$ (see
isomorphism~\eqref{delta-computed}).
 It follows that the $R$\+module $\Delta_I(P)$ is $I$\+adically
separated, and therefore the canonical surjective morphism
$b_{I,P}\:\Delta_I(P)\rarrow\Lambda_I(P)$ is an isomorphism.
 According to Theorem~\ref{when-weakly-proregular}, we can conclude
that the ideal $I\subset R$ is weakly proregular.

 Alternatively, for the proof of the converse assertion one can
first observe that if the functor~$\rho^\bs$ is fully faithful, then
$R\modl_{I\ctra}^\qs=R\modl_{I\ctra}$ (in view of
Proposition~\ref{extension-of-two}).
 This translates, via Lemma~\ref{when-all-quotseparated}, into
the map $b_{I,R[[X]]}\:\Delta_I(R[X])\rarrow\Lambda_I(R[X])$
being an isomorphism.
 Secondly, one can apply the converse assertion of the next
Theorem~\ref{I-contramodule-theorem}; and finally use
the isomorphism~\eqref{delta-computed} and
Theorem~\ref{when-weakly-proregular}.
 This is the argument hinted at in Remark~\ref{main-remark}.
\end{proof}

 Our last theorem in this section applies to the category of
(not necessarily quotseparated) $I$\+contramodules.
 It is the result promised in Remark~\ref{main-remark}.
 The cohomology vanishing condition appearing in this theorem 
corresponds, in terms of
Proposition~\ref{weak-proregularity-piece-by-piece},
to the combination of conditions~(i) for all $q\le-2$ and
conditions~(ii) for all $q\le-1$.
 So it is a bit weaker than weak proregularity of the ideal~$I$,
in that condition~(i) for $q=-1$ is not required.
 We refer to Remark~\ref{main-remark} for the discussion.
 
 Once again, we need to introduce some notation.
 Denote the triangulated functor $\sD^\star(R\modl_{I\ctra})\rarrow
\sD^\star(R\modl)$ induced by the inclusion of abelian categories
$R\modl_{I\ctra}\rarrow R\modl$ by
$$
 \pi^\star\:\sD^\star(R\modl_{I\ctra})\lrarrow\sD^\star(R\modl).
$$
 The functor~$\pi^\star$ obviously factorizes into a composition
$$
 \sD^\star(R\modl_{I\ctra})\overset{\zeta^\star}\lrarrow
 \sD^\star_{I\ctra}(R\modl)\overset{\iota^\star}\lrarrow
 \sD^\star(R\modl).
$$

\begin{thm} \label{I-contramodule-theorem}
 If the cohomology vanishing condition
$H^k\,\boR\Hom_R(K_\infty\spcheck(R;\s),\>R[X])=0$ holds for some
infinite set $X$ and all $k<0$, then for every symbol\/
$\star=\bs$, $+$, $-$, or\/~$\varnothing$, the functor\/
$\zeta^\star\:\sD^\star(R\modl_{I\ctra})\rarrow
\sD^\star_{I\ctra}(R\modl)$ is an equivalence of categories and
the functor $\pi^\star\:\sD^\star(R\modl_{I\ctra})\rarrow
\sD^\star(R\modl)$ is fully faithful.

 Conversely, if for one of the symbols\/ $\star=\bs$, $+$, $-$,
or\/~$\varnothing$ the functor~$\pi^\star$ is fully faithful,
then $H^k\,\boR\Hom_R(K_\infty\spcheck(R;\s),\>R[X])=0$ for all
sets~$X$ and all $k<0$.
\end{thm}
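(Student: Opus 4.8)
The plan is to prove the two assertions separately: the direct one following the pattern of Theorem~\ref{quotseparated-I-contramodule-theorem}, and the converse one mirroring its proof step by step, with partially defined adjoint functors. In both directions I will use that $\pi^\star$ factors as $\iota^\star\circ\zeta^\star$ with $\iota^\star$ fully faithful; hence $\pi^\star$ is fully faithful if and only if $\zeta^\star$ is, and in that case the essential image of $\pi^\star$, being a triangulated subcategory of $\sD^\star(R\modl)$ containing $R\modl_{I\ctra}$, is all of $\sD^\star_{I\ctra}(R\modl)$ (cf.\ Lemma~\ref{I-contramodule-R-modules-lemma} and Proposition~\ref{extension-of-two}), so that $\zeta^\star$ is a category equivalence.

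For the direct assertion, the key step is to reinterpret the hypothesis. By the short exact sequences~\eqref{derived-projective-limit-sequence}, the formula~\eqref{delta-computed}, and Proposition~\ref{weak-proregularity-piece-by-piece} together with Lemma~\ref{emmanouil}, the condition $H^k\,\boR\Hom_R(K_\infty\spcheck(R;\s),\>R[X])=0$ for $k<0$ and some infinite set $X$ is equivalent to the same vanishing for every infinite set $X$; since every nonzero projective $R$\+module $P$ is a direct summand of $R[X]$ for a suitable infinite set $X$, it follows that the natural morphism
$$ \boR\Hom_R(K_\infty\spcheck(R;\s),\>P)\lrarrow\Delta_I(P) $$
is an isomorphism in $\sD(R\modl)$ for every projective $R$\+module $P$; in other words, the sequential derived completion $\eta(P)=\boR\Hom_R(K_\infty\spcheck(R;\s),\>P)$ of a projective $R$\+module is concentrated in degree~$0$, where it equals $\Delta_I(P)$. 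Because $\eta=\boR\Hom_R(K_\infty\spcheck(R;\s),{-})$ is the reflector onto $\sD_{I\ctra}(R\modl)$ (Section~\ref{seq-secn}) and $\iota$ is fully faithful, one then obtains, for every $I$\+contramodule $R$\+module $N$ and every $k>0$,
$$ \Ext^k_R(\Delta_I(R[X]),\>N)\simeq\Hom_{\sD(R\modl)}(\eta(R[X]),\>N[k])\simeq\Hom_{\sD(R\modl)}(R[X],\>N[k])=\Ext^k_R(R[X],\>N)=0, $$
and the same holds with $\Delta_I(R[X])$ replaced by any of its direct summands, i.e., by any projective object of the abelian category $R\modl_{I\ctra}$. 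Granting this $\Ext$\+vanishing, $\sD^-(R\modl_{I\ctra})$ is the homotopy category of bounded above complexes of projectives of $R\modl_{I\ctra}$, and for two such complexes the termwise $\Ext$\+vanishing just proved identifies $\boR\Hom_R$ computed in $R\modl$ with the naive Hom complex, whose $0$\+th cohomology is computed inside $R\modl_{I\ctra}$; hence $\zeta^-$ is fully faithful, and therefore an equivalence onto $\sD^-_{I\ctra}(R\modl)$. The case $\star=\bs$ follows by restriction, and the cases $\star=+$ and $\varnothing$ by the standard truncation and homotopy (co)limit arguments of~\cite{Pmgm}.

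For the converse, suppose $\pi^\star$ is fully faithful for one of the symbols~$\star$; then it is fully faithful for $\star=\bs$ (by restriction), whence $\zeta^\bs\:\sD^\bs(R\modl_{I\ctra})\rarrow\sD^\bs_{I\ctra}(R\modl)$ is an equivalence of categories. Now the inclusion functor $R\modl_{I\ctra}\rarrow R\modl$ has the left adjoint reflector $\Delta_I$ (Lemma~\ref{I-contramodule-R-modules-lemma}), which takes projectives to projectives since it is left adjoint to an exact functor; consequently $\pi^-$ has a left adjoint computable as the left derived functor of $\Delta_I$, and the partially defined left adjoint $\delta^\bs$ of $\pi^\bs$ is defined on projective $R$\+modules, with $\delta^\bs(P)=\Delta_I(P)\in R\modl_{I\ctra}\subset\sD^\bs(R\modl_{I\ctra})$. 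On the other hand, $\iota^\bs$ has the left adjoint $\eta^\bs$ computable as $C^\bu\longmapsto\boR\Hom_R(K_\infty\spcheck(R;\s),\>C^\bu)$, as in the proof of Theorem~\ref{quotseparated-I-contramodule-theorem}. Since $\zeta^\bs$ is an equivalence, it identifies $\pi^\bs$ with $\iota^\bs$ and hence their (partial) left adjoints, which yields an isomorphism of complexes of $R$\+modules $\Delta_I(P)\simeq\boR\Hom_R(K_\infty\spcheck(R;\s),\>P)$ for every projective $R$\+module~$P$. Taking $P=R[X]$ for an arbitrary set $X$, we conclude that $H^k\,\boR\Hom_R(K_\infty\spcheck(R;\s),\>R[X])=0$ for all $k<0$, as desired.

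The main obstacle is the direct assertion in the cases $\star=+$ and $\varnothing$: passing from $\sD^-$ to the bounded below and unbounded derived categories requires care with homotopy (co)limits and with the convergence of Hom complexes in $R\modl_{I\ctra}$, a locally presentable but non-Grothendieck category without injective objects, and the cleanest route is to invoke the corresponding dévissage in~\cite{Pmgm}. The converse, by contrast, is purely formal once the partially defined adjoint $\delta^\bs$ has been identified on projective modules — and it is precisely this converse that underlies the alternative proof of Theorem~\ref{quotseparated-I-contramodule-theorem} alluded to in Remark~\ref{main-remark}.
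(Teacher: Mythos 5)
Your proof is correct and, for the converse assertion (the substantive new content of this theorem), it follows the paper's argument essentially verbatim: reduction to $\star=\bs$, the partially defined left adjoint $\delta^\bs$ with $\delta^\bs(P)=\Delta_I(P)$ on projectives, and the identification $\Delta_I(P)\simeq\boR\Hom_R(K_\infty\spcheck(R;\s),P)$ via the equivalence~$\zeta^\bs$. For the direct assertion the paper simply defers to the proofs of~\cite[Theorem~2.9 and Corollary~2.10]{Pmgm}, checking that the weaker cohomology vanishing hypothesis suffices there; your route is slightly more self-contained in the bounded(-above) cases, since you extract the $\Ext$\+vanishing $\Ext^k_R(Q,N)=0$ for projective objects $Q$ of $R\modl_{I\ctra}$ directly from the adjunction $\eta\dashv\iota$ and the degree\+$0$ concentration of $\eta(R[X])$ --- which is exactly condition~(3) of Lemma~\ref{derived-full-and-faithfulness-lemma} --- while, like the paper, you still defer the $\star=+$ and $\varnothing$ cases to the d\'evissage of~\cite{Pmgm}. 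Both approaches are sound; yours makes the role of the hypothesis (via Lemma~\ref{emmanouil} and the sequences~\eqref{derived-projective-limit-sequence}) a bit more transparent at the cost of leaving the unbounded case to the same external machinery.
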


\begin{proof}
 For the direct assertion, one has to follow the proofs
of~\cite[Theorem~2.9 and Corollary~2.10]{Pmgm} and convince oneself
that our present cohomology vanishing condition is sufficient for
the purposes of those proofs in lieu of the full weak proregularity.
 Indeed, one observes that the assertions of~\cite[Lemma~2.7]{Pmgm},
and consequently of~\cite[Lemma~2.8]{Pmgm}, remain valid under our
cohomology vanishing condition (while~\cite[Lemma~2.5]{Pmgm}
requires condition~(i) for $q=-1$, as~\cite[Example~2.6]{Pmgm} shows;
but this lemma is not needed for the proofs of~\cite[Theorem~2.9
and Corollary~2.10]{Pmgm}).
 In the notation of~\cite[proof of Lemma~2.7(a)]{Pmgm}, one can
consider projective or free $R$\+modules $F$, which is enough.

 The proof of the converse is similar to that of
Theorem~\ref{quotseparated-I-contramodule-theorem}.
  Clearly, if the functor~$\pi^\star$ is fully faithful for one of
the symbols $\star=\bs$, $+$, $-$, or~$\varnothing$, then it is
fully faithful for $\star=\bs$.
 It is also clear that if the functor~$\pi^\bs$ is fully faithful,
then its essential image coincides with the full subcategory
$\sD^\bs_{I\ctra}(R\modl)\subset\sD^\bs(R\modl)$; so
the functor~$\zeta^\bs$ is a category equivalence in this case.

 Now the functor of inclusion of abelian categories
$R\modl_{I\ctra}\rarrow R\modl$ has a left adjoint functor
$\Delta_I\:R\modl\rarrow R\modl_{I\ctra}$.
 The functor $\Delta_I$ is left adjoint to an exact functor, so
it takes projective $R$\+modules to projective objects
of the category $R\modl_{I\ctra}$.
 One easily concludes that the functor $\pi^-\:
\sD^-(R\modl_{I\ctra})\rarrow\sD^-(R\modl)$ has a left adjoint
functor $\delta^-\:\sD^-(R\modl)\rarrow\sD^-(R\modl_{I\ctra})$, which is
computable as the left derived functor of the functor~$\Delta_I$.
 So the functor~$\delta^-$ assigns to a bounded above complex of
projective $R$\+modules $P^\bu$ the bounded above complex of projective
$I$\+contramodule $R$\+modules $\Delta_I(P^\bu)$.
 Similarly, for $\star=\varnothing$, the functor
$\pi\:\sD(R\modl_{I\ctra}^\qs)\rarrow\sD(R\modl)$
also has a left adjoint functor
$\delta\:\sD(R\modl)\rarrow\sD(R\modl_{I\ctra}^\qs)$,
which can be computed by applying the functor $\Delta_I$ to
homotopy projective complexes of $R$\+modules.

 For $\star=\bs$, the functor $\pi^\bs\:\sD^\bs(R\modl_{I\tors})\rarrow
\sD^\bs(R\modl)$ does not seem to necessarily have a left adjoint,
in general; but we are interested in its partially defined left
adjoint functor~$\delta^\bs$.
 It is only important for us that the partial functor~$\delta^\bs$
is defined on projective $R$\+modules $P\in R\modl_\proj\subset
R\modl\subset\sD^\bs(R\modl)$; indeed, one has $\delta^\bs(P)=
\Delta_I(P)\in R\modl_{I\ctra}\subset\sD^\bs(R\modl_{I\ctra})$.

 On the other hand, the fully faithful functor $\iota^\star\:
\sD^\star_{I\ctra}(R\modl)\rarrow\sD^\star(R\modl)$ has a left
adjoint functor~$\eta^\star$, as it was explained in the proof of
of Theorem~\ref{quotseparated-I-contramodule-theorem}.
 Now if the functor $\zeta^\bs\:\sD^\bs(R\modl_{I\ctra})\rarrow
\sD^\bs_{I\ctra}(R\modl)$ is a category equivalence, then it identifies
the functor~$\pi^\bs$ with the functor~$\iota^\bs$; and consequently
it also identifies their adjoint functors~$\delta^\bs$ and~$\eta^\bs$.
 Thus the objects $\delta^\bs(P)\in\sD^\bs(R\modl_{I\ctra})$ and
$\eta^\bs(P)\in\sD^\bs_{I\ctra}(R\modl)$ are identified by
the functor~$\zeta^\bs$, for any projective $R$\+module~$P$.
 In other words, we have an isomorphism of complexes of $R$\+modules
$$
 \Delta_I(P)\simeq\boR\Hom_R(K_\infty\spcheck(R;\s),P).
$$
 Hence $H^k\,\boR\Hom_R(K_\infty\spcheck(R;\s),P)=0$ for $k<0$,
as desired.
\end{proof}

\Section{Idealistically Derived Complete Complexes}

 Let us recall the notation of
Theorem~\ref{quotseparated-I-contramodule-theorem} and its proof.
 The inclusion of abelian categories $R\modl_{I\ctra}^\qs
\rarrow R\modl$ induces a triangulated functor
$$
 \rho\:\sD(R\modl_{I\ctra}^\qs)\lrarrow\sD(R\modl),
$$
which has a left adjoint functor
$$
 \lambda\:\sD(R\modl)\lrarrow\sD(R\modl_{I\ctra}^\qs).
$$
 Porta--Shaul--Yekutieli in~\cite[Section~3]{PSY} and Yekutieli
in~\cite[Section~1]{Yek3} consider the functor
$$
  \boL\Lambda_I\:\sD(R\modl)\lrarrow\sD(R\modl),
$$
which is called the \emph{idealistic derived $I$\+adic completion
functor} in~\cite{Yek3}.
 Following the construction of the functor~$\lambda$ discussed
in the proof of Theorem~\ref{quotseparated-I-contramodule-theorem},
the functor $\boL\Lambda_I$ is the composition of the two adjoint
functors, $\boL\Lambda_I=\rho\circ\lambda$,
$$
 \sD(R\modl)\overset\lambda\lrarrow\sD(R\modl_{I\ctra}^\qs)
 \overset\rho\lrarrow\sD(R\modl).
$$

 The point is that applying the functor $\Lambda_I$ to every term
of a homotopy projective (or homotopy flat) complex of $R$\+modules
$P^\bu$ produces a complex of $I$\+adically separated and complete
$R$\+modules $\Lambda_I(P^\bu)$.
 All such modules are quotseparated $I$\+contramodules; so
a complex of such modules can be naturally considered as
an object of the derived category of quotseparated $I$\+contramodules,
$\lambda(P^\bu)=\Lambda_I(P^\bu)\in\sD(R\modl_{I\ctra}^\qs)$.
 One can also consider $\Lambda_I(P^\bu)$ as an object of the derived
category of $R$\+modules; this means applying the functor~$\rho$
to the object~$\lambda(P^\bu)$.

 Let us recall the definition from~\cite[Section~1]{Yek3}.
 A complex of $R$\+modules $C^\bu$ is said to be \emph{derived
$I$\+adically complete in the idealistic sense} if
the adjunction morphism
$$
 C^\bu\lrarrow\rho\lambda(C^\bu)=\boL\Lambda_I(C^\bu)
$$
is an isomorphism in $\sD(R\modl)$.
 Let us denote the full subcategory of derived $I$\+adically complete
complexes in the idealistic sense by
$$
 \sD(R\modl)^{ideal}_{I\com}\,\subset\,\sD(R\modl).
$$

 Similarly, one can denote by $\sD(R\modl_{I\ctra}^\qs)^{ideal}\subset
\sD(R\modl_{I\ctra}^\qs)$ the full subcategory of all 
complexes of quotseparated $I$\+contramodule $R$\+modules $B^\bu$
for which the adjunction morphism $\lambda\rho(B^\bu)\rarrow B^\bu$
is an isomorphism in $\sD(R\modl_{I\ctra}^\qs)$.
 Then $\sD(R\modl)^{ideal}_{I\com}\subset\sD(R\modl)$ and
$\sD(R\modl_{I\ctra}^\qs)^{ideal}\subset\sD(R\modl_{I\ctra}^\qs)$
are the maximal two full (triangulated) subcategories in
the respective triangulated categories in restriction to which
the functors~$\rho$ and~$\lambda$ are mutually inverse equivalences,
$$
 \sD(R\modl_{I\ctra}^\qs)\,\supset\,\sD(R\modl_{I\ctra}^\qs)^{ideal}
 \,\simeq\,\sD(R\modl)^{ideal}_{I\com}\,\subset\,\sD(R\modl).
$$

\begin{lem}
 For any finitely generated ideal $I\subset R$, all derived
$I$\+adically complete complexes in the idealistic sense are derived
$I$\+adically complete in the sequential sense.
\end{lem}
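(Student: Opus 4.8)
The plan is to read off the conclusion from the explicit description of the idealistic derived completion functor recalled immediately above. For any complex $C^\bu\in\sD(R\modl)$ one has $\boL\Lambda_I(C^\bu)=\rho\lambda(C^\bu)$, and, by the discussion preceding the lemma, the object $\lambda(C^\bu)\in\sD(R\modl_{I\ctra}^\qs)$ is represented by the complex $\Lambda_I(P^\bu)$, where $P^\bu$ is any homotopy projective (or homotopy flat) complex of $R$\+modules quasi-isomorphic to~$C^\bu$. Applying the forgetful functor~$\rho$, the object $\boL\Lambda_I(C^\bu)\in\sD(R\modl)$ is therefore represented by the same complex $\Lambda_I(P^\bu)$, all of whose terms are $I$\+adically separated and complete $R$\+modules, hence in particular $I$\+contramodule $R$\+modules.

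Next I would observe that the cohomology modules of this representing complex are $I$\+contramodules. Indeed, by Lemma~\ref{I-contramodule-R-modules-lemma} the full subcategory $R\modl_{I\ctra}$ is closed under kernels and cokernels in $R\modl$ and the inclusion $R\modl_{I\ctra}\rarrow R\modl$ is exact; consequently, for a complex all of whose terms lie in $R\modl_{I\ctra}$, the cohomology modules $H^n(\Lambda_I(P^\bu))$ computed in $R\modl$ are again objects of~$R\modl_{I\ctra}$. Thus, for every $C^\bu$, the complex $\boL\Lambda_I(C^\bu)$ has $I$\+contramodule cohomology modules.

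Now suppose that $C^\bu$ is derived $I$\+adically complete in the idealistic sense, i.~e., the adjunction morphism $C^\bu\rarrow\boL\Lambda_I(C^\bu)$ is an isomorphism in $\sD(R\modl)$. Then $H^n(C^\bu)\simeq H^n(\boL\Lambda_I(C^\bu))$ is an $I$\+contramodule $R$\+module for every $n\in\boZ$. By the equivalence of conditions~(1) and~(3) in Lemma~\ref{sequentially-complete}, this means precisely that $C^\bu$ is derived $I$\+adically complete in the sequential sense; in other words, $\sD(R\modl)^{ideal}_{I\com}\subset\sD_{I\ctra}(R\modl)$.

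There is no genuine obstacle here: the only point that needs a little care is the identification of $\boL\Lambda_I$ with $\rho\lambda$ and the fact, established in the text preceding the lemma, that $\lambda$ is computed by applying $\Lambda_I$ term-by-term to a homotopy projective (or homotopy flat) complex, so that the resulting complex consists of $I$\+adically separated and complete, hence $I$\+contramodule, $R$\+modules. Everything else is a formal consequence of $R\modl_{I\ctra}$ being an abelian subcategory of $R\modl$ closed under kernels and cokernels, together with the cohomological characterization of sequentially derived complete complexes in Lemma~\ref{sequentially-complete}.
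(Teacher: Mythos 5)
Your proposal is correct and follows essentially the same route as the paper: the paper's proof observes that idealistically complete complexes lie in the essential image of $\rho$, which lands in $\sD_{I\ctra}(R\modl)$, and then spells this out exactly as you do --- $\boL\Lambda_I(C^\bu)$ is represented by a complex of $I$\+adically separated and complete modules, whose cohomology modules are $I$\+contramodules, so Lemma~\ref{sequentially-complete} applies. Your explicit justification of the cohomology step via closure of $R\modl_{I\ctra}$ under kernels and cokernels (Lemma~\ref{I-contramodule-R-modules-lemma}) is a correct filling-in of a detail the paper leaves implicit.
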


\begin{proof}
 All derived $I$\+adically complete complexes in the idealistic sense
belong to the essential image of the functor~$\rho$, which is
contained in the full subcategory $\sD_{I\ctra}(R\modl)\subset
\sD(R\modl)$ in view of
the factorization~\eqref{rho-xi-iota-factorization}.
 To put it simply, a derived $I$\+adically complete complex $C^\bu$
in the idealistic sense is quasi-isomorphic to a complex of
$I$\+adically separated and complete $R$\+modules
$\boL\Lambda_I(C^\bu)$, while the cohomology modules of any complex
of $I$\+adically separated and complete $R$\+modules are
(quotseparated) $I$\+contramodule $R$\+modules.
 It remains to recall that, by Lemma~\ref{sequentially-complete},
\,$\sD_{I\ctra}(R\modl)\subset\sD(R\modl)$ is precisely the full
subcategory of derived $I$\+adically complete complexes in
the sequential sense.
\end{proof}

\begin{prop} \label{RHom-not-idealistic-complete}
 Let $X$ be an infinite set and $B^\bu$ be the complex (derived
category object) $B^\bu=\boR\Hom_R(K_\infty\spcheck(R;\s),\>R[X])
\in\sD(R\modl)$.
 Then \par
\textup{(a)} the complex $B^\bu$ is derived $I$\+adically complete
in the sequential sense; \par
\textup{(b)} the ideal $I\subset R$ is weakly proregular if and only if
the complex $B^\bu$ is derived $I$\+adically complete in the idealistic
sense.
\end{prop}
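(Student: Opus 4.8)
Part~(a) is immediate: the functor $C^\bu\longmapsto\boR\Hom_R(K_\infty\spcheck(R;\s),\>C^\bu)$ is the sequential derived $I$\+adic completion functor, which is idempotent, being the reflector onto the full subcategory $\sD_{I\ctra}(R\modl)$. Hence $B^\bu$ lies in $\sD_{I\ctra}(R\modl)$, which by Lemma~\ref{sequentially-complete} is exactly the category of complexes that are derived $I$\+adically complete in the sequential sense. For part~(b), the plan is to match idealistic completeness of $B^\bu$ with the isomorphism statement of Theorem~\ref{when-weakly-proregular}.

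Write $\eta\:\sD(R\modl)\rarrow\sD_{I\ctra}(R\modl)$ for the sequential completion reflector, left adjoint to the inclusion~$\iota$, with unit $u\:\mathrm{id}\rarrow\iota\eta$. By the factorization~\eqref{rho-xi-iota-factorization}, the essential image of $\boL\Lambda_I=\rho\lambda$ is contained in $\sD_{I\ctra}(R\modl)$; since this subcategory is reflective with reflector~$\eta$, the unit $v_{C^\bu}\:C^\bu\rarrow\boL\Lambda_I(C^\bu)$ of the adjunction $\lambda\dashv\rho$ factors uniquely through~$u_{C^\bu}$, producing a natural transformation $\phi\:\iota\eta\Rightarrow\boL\Lambda_I$ with $\phi\circ u=v$. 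As $R[X]$ is flat, $\boL\Lambda_I(R[X])=\Lambda_I(R[X])$, and $\phi_{R[X]}\:B^\bu\rarrow\Lambda_I(R[X])$ is then the canonical morphism appearing in Theorem~\ref{when-weakly-proregular} (the unique morphism compatible with the completion map $R[X]\rarrow\Lambda_I(R[X])$).

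The heart of the matter is the naturality square of $\phi$ applied to the morphism $u_{R[X]}\:R[X]\rarrow\iota\eta(R[X])=B^\bu$, namely
\[
 \phi_{B^\bu}\circ(\iota\eta)(u_{R[X]})=\boL\Lambda_I(u_{R[X]})\circ\phi_{R[X]}.
\]
Two of these maps are isomorphisms for every finitely generated~$I$: the map $(\iota\eta)(u_{R[X]})$ by the triangle identities for the idempotent adjunction $\eta\dashv\iota$, and $\boL\Lambda_I(u_{R[X]})=\rho\lambda(u_{R[X]})$ because $\lambda$ annihilates $\mathrm{cone}(u_{R[X]})$. Moreover $u_{B^\bu}$ is an isomorphism (again by the triangle identities, since $B^\bu$ lies in the essential image of~$\iota$), so that $\phi_{B^\bu}=v_{B^\bu}\circ u_{B^\bu}^{-1}$. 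I expect the vanishing $\lambda(\mathrm{cone}(u_{R[X]}))=0$ to require the most care: one first notes $\mathrm{cone}(u_{R[X]})\in\ker\eta$, since $\eta(u_{R[X]})$ is invertible and $\eta$ is triangulated; and then, for any $C^\bu$ with $\eta(C^\bu)=0$ and any $B_0^\bu\in\sD(R\modl_{I\ctra}^\qs)$, one has $\Hom(\lambda(C^\bu),B_0^\bu)\simeq\Hom(C^\bu,\rho(B_0^\bu))=0$, because $\rho(B_0^\bu)\in\sD_{I\ctra}(R\modl)$ and $\eta$ is left adjoint to~$\iota$; hence $\lambda(C^\bu)=0$.

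Granting this, the displayed identity shows that $\phi_{B^\bu}$ is an isomorphism if and only if $\phi_{R[X]}$ is one. Since $\phi_{B^\bu}=v_{B^\bu}\circ u_{B^\bu}^{-1}$ with $u_{B^\bu}$ invertible, the former condition says precisely that $B^\bu$ is derived $I$\+adically complete in the idealistic sense, while by Theorem~\ref{when-weakly-proregular} the latter says precisely that $I$ is weakly proregular; this establishes~(b). For the direction ``$I$ weakly proregular $\Rightarrow$ $B^\bu$ idealistically complete'' one may also argue directly, bypassing~$\phi$: by Theorem~\ref{quotseparated-I-contramodule-theorem}, weak proregularity makes $\rho$ fully faithful with essential image $\sD_{I\ctra}(R\modl)$, so the unit $v$ is invertible throughout $\sD_{I\ctra}(R\modl)$, in particular at $B^\bu$ by part~(a).
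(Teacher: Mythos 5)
Your proof is correct and follows essentially the same route as the paper: both parts reduce (b) to Theorem~\ref{when-weakly-proregular} by identifying the idealistic adjunction morphism at $B^\bu$ with the canonical comparison morphism $\boR\Hom_R(K_\infty\spcheck(R;\s),R[X])\rarrow\Lambda_I(R[X])$. The only difference is bookkeeping: where you build the natural transformation $\phi\:\iota\eta\Rightarrow\boL\Lambda_I$ and chase its naturality square at $u_{R[X]}$ (verifying $\lambda(\mathrm{cone}(u_{R[X]}))=0$ by a Hom-vanishing argument), the paper gets the same identification from the identity $\lambda\iota\eta=\lambda$, deduced from uniqueness of adjoints since $(\lambda\iota)\eta$ is left adjoint to $\iota\xi=\rho$.
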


\begin{proof}
 Part~(a): for any complex of $R$\+modules $C^\bu$, the complex/derived
category object $\boR\Hom_R(K_\infty\spcheck(R;\s),C^\bu)$ is
derived $I$\+adically complete in the sequential sense.
 In fact, $\eta=\boR\Hom_R(K_\infty\spcheck(R;\s),{-})$ is
the reflector $\eta\:\sD(R\modl)\rarrow\sD_{I\ctra}(R\modl)$; see
the discussion in the proof of
Theorem~\ref{quotseparated-I-contramodule-theorem}
and in~\cite[Section~3]{Pmgm}.
 In other words, it suffices to recall that the sequential derived
$I$\+adic completion functor is idempotent; see~\cite[Remark~2.23]{Yek3}
and the end of Section~\ref{seq-secn}.

 Part~(b): if the ideal $I$ is weakly proregular, then a complex of
$R$\+modules is derived $I$\+adically complete in the idealistic sense
if and only if it is derived $I$\+adically complete in the sequential
sense (see~\cite[Corollary~3.12]{Yek3}, which is based
on~\cite[Theorem~3.11\,(i)\,$\Rightarrow$\,(iii)]{Yek3} and
the results of~\cite{PSY}; or alternatively
Theorem~\ref{quotseparated-I-contramodule-theorem} above).
 Keeping part~(a) in mind, this proves the ``only if'' assertion.

 ``If'': in the notation of
Theorem~\ref{quotseparated-I-contramodule-theorem} and its proof,
we have $B^\bu=\eta(R[X])=\iota\eta(R[X])$.
 Notice that the functor left adjoint to the functor $\xi\:
\sD(R\modl_{I\ctra}^\qs)\rarrow\sD_{I\ctra}(R\modl)$ is computable as
the composition $\sD_{I\ctra}(R\modl)\overset\iota\rarrow
\sD(R\modl)\overset\lambda\rarrow\sD(R\modl_{I\ctra}^\qs)$, since
the functor~$\iota$ is fully faithful.
 It follows that the composition of functors $(\lambda\iota)\eta$ is
left adjoint to the functor $\iota\xi=\rho$; hence
$\lambda\iota\eta=\lambda$.

 Now assume that the complex $B^\bu$ is derived $I$\+adically complete
in the idealistic sense.
 Then the adjunction morphism
$$
 \iota\eta(R[X])=B^\bu\lrarrow\rho\lambda(B^\bu)=
 \rho\lambda\iota\eta(R[X])=\rho\lambda(R[X])
$$
is an isomorphism in $\sD(R\modl)$.
 In other words, this means that the canonical morphism
$$
 \boR\Hom_R(K_\infty\spcheck(R;\s),\>R[X])=\iota\eta(R[X])\lrarrow
 \rho\lambda(R[X])=\Lambda_I(R[X])
$$
is an isomorphism in $\sD(R\modl)$.
 According to Theorem~\ref{when-weakly-proregular}, it follows that
the ideal $I\subset R$ is weakly proregular.
\end{proof}

\begin{prop} \label{decaying-functions-not-idealistic-complete}
 Let $X$ be an infinite set and $Q_X=\R[[X]]=\Lambda_I(R[X])$ be
the $R$\+module of decaying functions $X\rarrow\R=\Lambda_I(R)$
(in the sense of\/~\cite{Yek0}).
 Then \par
\textup{(a)} the $R$\+module $Q_X$ is a derived $I$\+adically complete
complex in the sequential sense; \par
\textup{(b)} the ideal $I\subset R$ is weakly proregular if and only
if the $R$\+module $Q_X$ is a derived $I$\+adically complete complex
in the idealistic sense.
\end{prop}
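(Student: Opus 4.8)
First I would dispose of part~(a) together with the ``only if'' half of part~(b). The $R$\+module $Q_X=\Lambda_I(R[X])$ is $I$\+adically separated and complete, since the $I$\+adic completion of any $R$\+module is so (cf.\ the proof of Proposition~\ref{extension-of-two}); hence it is an $I$\+contramodule $R$\+module. Regarded as a complex concentrated in degree~$0$, it has $I$\+contramodule cohomology, so by Lemma~\ref{sequentially-complete} it is derived $I$\+adically complete in the sequential sense. If moreover $I$ is weakly proregular, then derived $I$\+adic completeness in the idealistic and the sequential senses coincide (by~\cite[Corollary~3.12]{Yek3}, or by Theorem~\ref{quotseparated-I-contramodule-theorem} together with the results of~\cite{PSY}, as in the proof of Proposition~\ref{RHom-not-idealistic-complete}(b)), so in that case $Q_X$ is derived $I$\+adically complete in the idealistic sense as well.

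For the ``if'' half of part~(b) the plan is to reduce to Theorem~\ref{when-weakly-proregular} through the cofiber of the completion map of $R[X]$. Keeping the notation $\lambda\dashv\rho$, $\boL\Lambda_I=\rho\lambda$, of Theorem~\ref{quotseparated-I-contramodule-theorem} and its proof, let $\mathsf E=\boR\Hom_R(K_\infty\spcheck(R;\s),{-})$ be the reflector of $\sD(R\modl)$ onto the subcategory $\sD_{I\ctra}(R\modl)$ of sequentially derived complete complexes. Since $R[X]$ is flat, its $I$\+adic completion map $R[X]\rarrow\Lambda_I(R[X])=Q_X$ is the adjunction unit $R[X]\rarrow\rho\lambda(R[X])$, so $Q_X=\rho\lambda(R[X])$ as an object of $\sD(R\modl)$. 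By the triangle identities for $\lambda\dashv\rho$, the adjunction unit $Q_X\rarrow\rho\lambda(Q_X)=\boL\Lambda_I(Q_X)$ is then a split monomorphism, retracted by $\rho(c_{\lambda(R[X])})$ (with $c$~the counit); hence $Q_X$ is derived $I$\+adically complete in the idealistic sense if and only if this retraction is an isomorphism, equivalently if and only if $\boL\Lambda_I(N^\bu)=0$, where $N^\bu=\mathrm{cofib}\bigl(R[X]\rarrow\Lambda_I(R[X])\bigr)$ in $\sD(R\modl)$. On the other hand, applying the triangulated functor $\mathsf E$ to the triangle $R[X]\rarrow\Lambda_I(R[X])\rarrow N^\bu\rarrow R[X][1]$ and using that the $I$\+contramodule $\Lambda_I(R[X])$ is already sequentially complete, one identifies $\mathsf E(N^\bu)$ with the cofiber of the canonical morphism $\boR\Hom_R(K_\infty\spcheck(R;\s),\>R[X])\rarrow\Lambda_I(R[X])$; by the exact sequences~\eqref{derived-projective-limit-sequence} and formula~\eqref{delta-computed} this cofiber is concentrated in cohomological degrees $\le-1$, and by Theorem~\ref{when-weakly-proregular} (for the infinite set~$X$) it vanishes if and only if $I$ is weakly proregular. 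Since $\ker\mathsf E\subseteq\ker\lambda$ (as in the proof of Proposition~\ref{RHom-not-idealistic-complete}), vanishing of $\mathsf E(N^\bu)$ forces vanishing of $\boL\Lambda_I(N^\bu)$; so the proposition comes down to the reverse implication $\boL\Lambda_I(N^\bu)=0\Rightarrow\mathsf E(N^\bu)=0$.

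This reverse implication is the step I expect to be the main obstacle, as it is not formal: in general $\ker\lambda\supsetneq\ker\mathsf E$ (the sequential and idealistic derived completions of an arbitrary complex differ), so one must use the specific shape of $N^\bu$, whose only nonzero cohomology modules are $H^{-1}(N^\bu)=\bigcap_{n\ge1}I^nR[X]$ and $H^0(N^\bu)=\Lambda_I(R[X])/\mathrm{im}\,R[X]$. I would attack it by an explicit computation of $\boL\Lambda_I(N^\bu)$, equivalently of the modules $\boL_i\Lambda_I(\Lambda_I(R[X]))=H^{-i}\boL\Lambda_I(Q_X)$ for $i\ge1$: replacing $\Lambda_I(R[X])$ by a homotopy\+flat resolution built from the telescope complex (the bounded complex of countably generated free $R$\+modules quasi\+isomorphic to $K_\infty\spcheck(R;\s)$; see~\cite[Section~5]{PSY} and~\cite[Section~2]{Pmgm}), one tracks these cohomology modules through the $\varprojlim_{n\ge1}$ and $\varprojlim^1_{n\ge1}$ terms of the sequences~\eqref{derived-projective-limit-sequence}, using the identity $\Lambda_I(C)=\varprojlim_{n\ge1}H^0(K(R;\s^n)\ot_RC)$ of~\eqref{lambda-computed}, and one verifies that $\boL\Lambda_I(N^\bu)=0$ forces all the vanishing conditions of Proposition~\ref{weak-proregularity-piece-by-piece}. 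Theorem~\ref{when-weakly-proregular} then yields the weak proregularity of~$I$. This argument runs parallel to that of Proposition~\ref{RHom-not-idealistic-complete}, with $Q_X=\Lambda_I(R[X])$ here playing the role that $\boR\Hom_R(K_\infty\spcheck(R;\s),R[X])$ plays there; the two are exchanged by the completion map $R[X]\rarrow\Lambda_I(R[X])$.
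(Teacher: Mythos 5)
Your part~(a) and the ``only if'' half of part~(b) are correct and agree with the paper's argument. Your formal reduction of the ``if'' half is also sound as far as it goes: the identification $Q_X=\rho\lambda(R[X])$ in $\sD(R\modl)$, the split-monomorphism observation, the equivalence of ``$Q_X$ is idealistically complete'' with ``$\boL\Lambda_I(N^\bu)=0$'' for $N^\bu$ the cofiber of $R[X]\rarrow\Lambda_I(R[X])$, and the identification of $\mathsf{E}(N^\bu)$ with the cone of the canonical morphism of Theorem~\ref{when-weakly-proregular} all check out. But the implication you correctly single out as the crux, $\boL\Lambda_I(N^\bu)=0\Rightarrow\mathsf{E}(N^\bu)=0$, is precisely where the proposal stops being a proof, and the computational attack you sketch for it does not work as described. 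The short exact sequences~\eqref{derived-projective-limit-sequence} and the telescope complex compute the \emph{sequential} completion $\boR\Hom_R(K_\infty\spcheck(R;\s),C)$, not the idealistic one: to evaluate $\boL_i\Lambda_I(Q_X)=H^{-i}\boL\Lambda_I(Q_X)$ you would need a homotopy flat resolution of the completed module $\Lambda_I(R[X])$ itself, and no analogue of~\eqref{derived-projective-limit-sequence} is available for that (the inaccessibility of such resolutions is the very source of the pathology of $\boL\Lambda_I$). So the decisive step is missing.

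The paper closes this gap by a categorical argument that never computes $\boL\Lambda_I(Q_X)$. From the hypothesis one deduces, using that $\rho$ is conservative (the inclusion $R\modl_{I\ctra}^\qs\rarrow R\modl$ is exact and takes nonzero objects to nonzero objects), that the adjunction morphism $\lambda\rho(Q_X)\rarrow Q_X$ is an isomorphism already in $\sD(R\modl_{I\ctra}^\qs)$; a countably-filtered direct limit argument then propagates this to $Q_Y$ for every set~$Y$. Adjunction together with the projectivity of $Q_Y$ in $R\modl_{I\ctra}^\qs$ then yields $\Ext^k_R(Q_Y,B)=0$ for all quotseparated $I$\+contramodule $R$\+modules $B$ and all $k>0$; hence $\rho^\bs$ is fully faithful by Lemma~\ref{derived-full-and-faithfulness-lemma}, and weak proregularity follows from Theorem~\ref{quotseparated-I-contramodule-theorem}. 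To salvage your outline you should replace the proposed computation by an argument of this kind, exploiting that $Q_X$ is a projective object of $R\modl_{I\ctra}^\qs$ rather than trying to resolve it.
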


\begin{proof}
 Part~(a): the $R$\+module $Q_X$ is a quotseparated $I$\+contramodule;
in fact, $Q_X$ is a projective (in some sense, free) object in
the abelian category $R\modl_{I\ctra}^\qs$ (see the discussion in
Section~\ref{derived-complete-modules-secn}).
 Hence $Q_X\in R\modl_{I\ctra}^\qs\subset R\modl_{I\ctra}\subset
\sD_{I\ctra}(R\modl)$ is a derived $I$\+adically complete complex in
the sequential sense by Lemma~\ref{sequentially-complete}.

 Part~(b): the ``only if'' assertion follows from part~(a) for
the same reason as in the proof of
Proposition~\ref{RHom-not-idealistic-complete}.
 To prove the ``if'', let us consider the $R$\+module $Q_X$ as
an object of the derived category $\sD(R\modl_{I\ctra}^\qs)$.
 Then the same $R$\+module viewed as an object of the derived
category $\sD(R\modl)$ is denoted by~$\rho(Q_X)$.
 Assume that $\rho(Q_X)$ is a derived $I$\+adically complete complex in
the idealistic sense; then the adjunction morphism $\rho(Q_X)\rarrow
\rho\lambda\rho(Q_X)$ is an isomorphism.

 For any pair of adjoint functors $\rho$ and~$\lambda$ and any
object $Q$ in the relevant category, the composition of natural
morphisms $\rho(Q)\rarrow \rho\lambda\rho(Q)\rarrow\rho(Q)$ is
the identity morphism.
 If the morphism $\rho(Q)\rarrow\rho\lambda\rho(Q)$ is an isomorphism,
then the morphism $\rho\lambda\rho(Q)\rarrow\rho(Q)$ is
an isomorphism, too.

 The latter morphism is obtained by applying the functor~$\rho$ to
the adjunction morphism $\lambda\rho(Q)\rarrow Q$.
 In the situation at hand, the functor~$\rho$ is conservative: if
$f\:B^\bu\rarrow C^\bu$ is a morphism in the derived category
$\sD(R\modl_{I\ctra}^\qs)$ and $\rho(f)\:\rho(B^\bu)\rarrow\rho(C^\bu)$
is an isomorphism in $\sD(R\modl)$, then the morphism~$f$ is
an isomorphism (since the inclusion of abelian categories
$R\modl_{I\ctra}^\qs\rarrow R\modl$ is an exact functor taking nonzero
objects to nonzero objects).
 We conclude that the adjunction morphism $\lambda\rho(Q_X)\rarrow Q_X$
is an isomorphism in $\sD(R\modl_{I\ctra}^\qs)$.

 At this point we have to recall that the object $Q_X$, by definition,
depends on the choice of an infinite set~$X$.
 We claim that if the morphism $\lambda\rho(Q_X)\rarrow Q_X$ is
an isomorphism for one particular infinite set $X$, then so is
the adjunction morphism $\lambda\rho(Q_Y)\rarrow Q_Y$ for every set~$Y$.
 Indeed, passing to a direct summand, one can assume the set $X$ to be
countable.
 The key observation is that both the inclusion functor
$R\modl_{I\ctra}^\qs\rarrow R\modl$ and the $I$\+adic completion functor
$\Lambda_I\:R\modl\rarrow R\modl_{I\ctra}^\qs$ preserve
countably-filtered direct limits.
 Using functorial (homotopy) flat resolutions together with
the observation that the derived functor~$\lambda$ can be computed
with (homotopy) flat resolutions~\cite[Lemma~3.5 and
Proposition~3.6]{PSY}, one shows that both the functors~$\rho$
and~$\lambda$ preserve countably-filtered direct limits of complexes
of modules (in an appropriate sense).
 It remains to observe that $Q_Y=\varinjlim_{Z\subset Y}Q_Z$, where
the direct limit is taken over all the countable subsets $Z$ of
a given infinite set~$Y$.
 We suggest~\cite{AR} as a background reference on
$<\aleph_1$\+filtered colimits; and leave it to the reader to fill
the details of the above argument.

 In the remaining last paragraph of this proof, the argument is based
on the premise, justified above, that the morphism $\lambda\rho(Q_Y)
\rarrow Q_Y$ is an isomorphism for every set~$Y$.
 Denote the abelian categories involved by $\sA=R\modl$ and
$\sB=R\modl_{I\ctra}^\qs$; and let $B\in\sB$ be an arbitrary object.
 Viewing $B$ as an object of the derived category $\sD(\sB)$, we
can compute
$$
 \Hom_{\sD(\sA)}(\rho(Q_Y),\rho(B)[k])\simeq
 \Hom_{\sD(\sB)}(\lambda\rho(Q_Y),B[k])\simeq
 \Hom_{\sD(\sB)}(Q_Y,B[k])=0
$$
for all $B\in\sB$ and $k>0$.
 The next lemma allows to conclude that the triangulated functor
$\rho^\bs\:\sD^\bs(R\modl_{I\ctra}^\qs)\rarrow\sD^\bs(R\modl)$ is
fully faithful.
 By Theorem~\ref{quotseparated-I-contramodule-theorem}, it follows
that the ideal $I\subset R$ is weakly proregular.
\end{proof}

\begin{lem} \label{derived-full-and-faithfulness-lemma}
 Let $\rho^\as\:\sB\rarrow\sA$ be a fully faithful exact functor
between two abelian categories, and let $\rho^\bs\:\sD^\bs(\sB)\rarrow
\sD^\bs(\sA)$ be the induced triangulated functor between the bounded
derived categories.
 Then the following two conditions are equivalent:
\begin{enumerate}
\item the functor~$\rho^\bs$ is fully faithful;
\item for any two objects $C$ and $B\in\sB$ and all integers $k>0$,
the functor~$\rho^\as$ induces isomorphisms of the\/ $\Ext$ groups
$$
 \Ext_\sB^k(C,B)\simeq\Ext_\sA^k(\rho^\as(C),\rho^\as(B)).
$$
\end{enumerate}
 Furthermore, if there are enough projective objects in
the abelian category\/ $\sB$, then conditions~\textup{(1\+-2)} are
equivalent to
\begin{enumerate}
\setcounter{enumi}{2}
\item for any projective object $Q\in\sB$, any object $B\in\sB$, and
all integers $k>0$, one has
$$
 \Ext_\sA^k(\rho^\as(Q),\rho^\as(B))=0.
$$
\end{enumerate}
\end{lem}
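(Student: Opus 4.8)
The plan is to establish $(1)\Leftrightarrow(2)$ by a dévissage over the bounded derived category and $(2)\Leftrightarrow(3)$ by dimension shifting. The implications $(1)\Rightarrow(2)$ and $(2)\Rightarrow(3)$ are immediate. Indeed, for complexes $C$, $B$ concentrated in cohomological degree~$0$ one has $\Hom_{\sD^\bs(\sB)}(C,B[k])=\Ext^k_\sB(C,B)$ and $\Hom_{\sD^\bs(\sA)}(\rho^\as C,\rho^\as B[k])=\Ext^k_\sA(\rho^\as C,\rho^\as B)$ for $k\ge0$ (both groups vanishing for $k<0$), while the comparison map for $k=0$ is the isomorphism $\Hom_\sB(C,B)\simeq\Hom_\sA(\rho^\as C,\rho^\as B)$ coming from the full faithfulness of~$\rho^\as$; so condition~(2) is exactly the statement that $\rho^\bs$ induces isomorphisms $\Hom_{\sD^\bs(\sB)}(C,B[k])\simeq\Hom_{\sD^\bs(\sA)}(\rho^\as C,\rho^\as B[k])$ for all $C,B\in\sB$ and all $k\in\boZ$, which is a particular case of~(1). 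And $(2)\Rightarrow(3)$ because $\Ext^k_\sB(Q,B)=0$ for a projective object $Q$ and $k>0$.

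For $(2)\Rightarrow(1)$ I would bootstrap from degree-$0$ objects to all of $\sD^\bs(\sB)$ by a two-variable dévissage. First fix $C\in\sB$ and let $\mathcal U\subset\sD^\bs(\sB)$ be the full subcategory of those $Y$ for which $\rho^\bs$ induces isomorphisms $\Hom_{\sD^\bs(\sB)}(C,Y[k])\simeq\Hom_{\sD^\bs(\sA)}(\rho^\as C,\rho^\bs Y[k])$ for all $k\in\boZ$. Since the comparison maps are natural, applying $\Hom_{\sD^\bs(\sB)}(C,-)$ to a distinguished triangle produces a morphism of long exact sequences, and the five lemma shows that $\mathcal U$ is a strictly full triangulated subcategory; by condition~(2) it contains all the degree-$0$ objects $\sB$. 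Every $Y\in\sD^\bs(\sB)$ with cohomology in degrees $[a,b]$ is built from finitely many shifts of objects of~$\sB$ by iterated cones: one inducts on $b-a$ using the truncation triangle $\tau_{\le b-1}Y\rarrow Y\rarrow\tau_{\ge b}Y\rarrow\tau_{\le b-1}Y[1]$, in which $\tau_{\ge b}Y\simeq H^b(Y)[-b]$ is a shift of an object of $\sB$ while $\tau_{\le b-1}Y$ has cohomology in degrees $[a,b-1]$. Hence $\mathcal U=\sD^\bs(\sB)$. Now fix an arbitrary $Y\in\sD^\bs(\sB)$ and run the symmetric argument in the first variable: the full subcategory $\mathcal V\subset\sD^\bs(\sB)$ of objects $X$ for which $\rho^\bs$ induces isomorphisms $\Hom_{\sD^\bs(\sB)}(X,Y[k])\simeq\Hom_{\sD^\bs(\sA)}(\rho^\bs X,\rho^\bs Y[k])$ is again strictly full and triangulated (five lemma along the long exact sequences obtained from triangles in the first variable), and it contains $\sB$ by the previous step; so $\mathcal V=\sD^\bs(\sB)$. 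Letting $X$ and $Y$ range over all of $\sD^\bs(\sB)$ yields~(1).

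For $(3)\Rightarrow(2)$, assuming $\sB$ has enough projectives, I would argue by induction on $k\ge1$. Given $C\in\sB$, choose a short exact sequence $0\rarrow K\rarrow Q\rarrow C\rarrow0$ with $Q$ projective; applying the exact functor $\rho^\as$ gives a short exact sequence $0\rarrow\rho^\as K\rarrow\rho^\as Q\rarrow\rho^\as C\rarrow0$ in~$\sA$. The long exact sequences of $\Ext_\sB(-,B)$ and of $\Ext_\sA(-,\rho^\as B)$ attached to these two short exact sequences form a commutative ladder under the comparison maps. Using $\Ext^{\ge1}_\sB(Q,B)=0$ and, by~(3), $\Ext^{\ge1}_\sA(\rho^\as Q,\rho^\as B)=0$, this ladder reduces at its left end to
\[
 \xymatrix{
  \Hom_\sB(Q,B)\ar[r]\ar[d] & \Hom_\sB(K,B)\ar[r]\ar[d] & \Ext^1_\sB(C,B)\ar[r]\ar[d] & 0 \\
  \Hom_\sA(\rho^\as Q,\rho^\as B)\ar[r] & \Hom_\sA(\rho^\as K,\rho^\as B)\ar[r] & \Ext^1_\sA(\rho^\as C,\rho^\as B)\ar[r] & 0
 }
\]
in which the two left vertical arrows are isomorphisms by the full faithfulness of~$\rho^\as$; hence the right vertical arrow is an isomorphism, settling the case $k=1$ for all pairs $(C,B)$. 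For the inductive step, the same two long exact sequences give natural isomorphisms $\Ext^{k+1}_\sB(C,B)\simeq\Ext^k_\sB(K,B)$ and $\Ext^{k+1}_\sA(\rho^\as C,\rho^\as B)\simeq\Ext^k_\sA(\rho^\as K,\rho^\as B)$ compatible with the comparison maps, and the induction hypothesis applied to the pair $(K,B)$ completes the argument.

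The one place demanding genuine care is the dévissage proving $(2)\Rightarrow(1)$: one must check that the comparison maps on $\Hom$ groups in $\sD^\bs$ are natural in both variables, so that the five lemma may be applied along a morphism of long exact sequences, and that the generation of $\sD^\bs(\sB)$ by the subcategory $\sB$ genuinely relies on boundedness — for $\sD^+$, $\sD^-$, or $\sD$ one would need an extra hypothesis (finiteness of homological dimensions, or suitable adjoint/resolution functors) of the sort used elsewhere in this paper. The remaining arguments are routine diagram chases.
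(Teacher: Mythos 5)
Your proposal is correct and follows essentially the same route as the paper: the equivalence $(1)\Leftrightarrow(2)$ rests on the fact that $\sB$ generates $\sD^\bs(\sB)$ (which you make explicit via the standard two-variable d\'evissage with truncation triangles), and $(2)\Leftrightarrow(3)$ is handled by reducing to projectives. The only cosmetic difference is in $(3)\Rightarrow(2)$, where you use dimension shifting along a single short exact sequence $0\to K\to Q\to C\to 0$ with induction on~$k$, while the paper computes $\Ext_\sA$ directly from the image $\rho^\as(Q_\bu)$ of a full projective resolution, observing that its terms are $\Ext_\sA({-},\rho^\as(B))$\+acyclic; the two arguments rely on exactly the same facts.
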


\begin{proof}
 The equivalence (1)\,$\Longleftrightarrow$\,(2) holds because
the triangulated category $\sD^\bs(\sB)$ is generated by its full
subcategory $\sB\subset\sD(\sB)$.
 To check the implication (2)\,$\Longrightarrow$\,(3), it suffices
to take $C=Q$.
 In order to prove (3)\,$\Longrightarrow$\,(2), one can
choose a projective resolution $Q_\bu$ of the object $C$ in
the category $\sB$ and compute that $\Ext^k_\sB(C,B)=
H^k\Hom_\sB(Q_\bu,B)\simeq H^k\Hom_\sA(\rho^\as(Q_\bu),\rho^\as(B))
\simeq\Ext^k_\sA(\rho^\as(C),\rho^\as(B))$, where the latter
isomorphism holds since $\rho^\as(Q_\bu)$ is a resolution of
the object $\rho^\as(C)\in\sA$ by objects $\rho^\as(Q_i)$, \,$i\ge0$,
satisfying $\Ext^k_\sA(\rho^\as(Q_i),\rho^\as(B))=0$ for $k>0$.
\end{proof}

\begin{cor} \label{idealistic-completion-idempotence}
 The idealistic derived $I$\+adic completion functor\/
$\boL\Lambda_I\:\sD(R\modl)\rarrow\sD(R\modl)$ is idempotent if and
only if the ideal $I\subset R$ is weakly proregular.  \hfuzz=1.2pt
\end{cor}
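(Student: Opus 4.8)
The plan is to work with the factorization $\boL\Lambda_I=\rho\circ\lambda$ recalled at the beginning of this section. The functor $\boL\Lambda_I\:\sD(R\modl)\rarrow\sD(R\modl)$ carries the natural transformation $\eta\:\mathrm{id}\rarrow\rho\lambda=\boL\Lambda_I$, namely the unit of the adjunction $\lambda\dashv\rho$; so ``$\boL\Lambda_I$ is idempotent'' means that the morphism $\boL\Lambda_I(C^\bu)\rarrow\boL\Lambda_I\boL\Lambda_I(C^\bu)$ given by $\eta$ at the object $\boL\Lambda_I(C^\bu)$ is an isomorphism in $\sD(R\modl)$ for all $C^\bu$, equivalently that $\rho\lambda$ is an idempotent monad. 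The first consequence I would record is that, in the idempotent case, \emph{every} object in the essential image of $\boL\Lambda_I$ has invertible unit morphism (by naturality of~$\eta$), i.e., is derived $I$\+adically complete in the idealistic sense.

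For the ``if'' direction, suppose $I$ is weakly proregular. Then Theorem~\ref{quotseparated-I-contramodule-theorem} says that the functor $\rho\:\sD(R\modl_{I\ctra}^\qs)\rarrow\sD(R\modl)$ is fully faithful, so the counit $\epsilon\:\lambda\rho\rarrow\mathrm{id}$ of the adjunction is an isomorphism. Hence $\rho\lambda$ is the reflector of $\sD(R\modl)$ onto the reflective subcategory $\rho(\sD(R\modl_{I\ctra}^\qs))=\sD_{I\ctra}(R\modl)$, and any reflector is idempotent; concretely, $\rho\epsilon\lambda$ provides a natural isomorphism $\boL\Lambda_I\boL\Lambda_I=\rho\lambda\rho\lambda\simeq\rho\lambda=\boL\Lambda_I$, compatible with~$\eta$.

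For the ``only if'' direction, suppose $\boL\Lambda_I$ is idempotent and fix an infinite set~$X$. The free module $R[X]=R^{(X)}$ is flat and sits in cohomological degree~$0$, so it is already a homotopy flat complex; therefore $\boL\Lambda_I(R[X])=\Lambda_I(R[X])=Q_X$, the $R$\+module of decaying functions $X\rarrow\R$, regarded as an object of $\sD(R\modl)$ (in the notation of Proposition~\ref{decaying-functions-not-idealistic-complete}, this object is $\rho(Q_X)$). By the remark in the first paragraph, $Q_X$, lying in the essential image of $\boL\Lambda_I$, is derived $I$\+adically complete in the idealistic sense; Proposition~\ref{decaying-functions-not-idealistic-complete}(b) then gives that $I$ is weakly proregular. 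The proof is thus essentially formal, the substantive input being Theorem~\ref{quotseparated-I-contramodule-theorem} and Proposition~\ref{decaying-functions-not-idealistic-complete}; the only places demanding attention are the bookkeeping identifying the ``idempotence'' natural transformation with the adjunction unit (so that membership in the image of $\boL\Lambda_I$ really does force idealistic completeness) and the flatness observation making $\boL\Lambda_I(R[X])$ literally equal to~$Q_X$. One could alternatively run the ``only if'' argument through the complex $B^\bu=\boR\Hom_R(K_\infty\spcheck(R;\s),\>R[X])$ of Proposition~\ref{RHom-not-idealistic-complete}, but that would require first checking that $B^\bu$ lies in the essential image of $\boL\Lambda_I$, which is tantamount to the weak proregularity one is after; the object $Q_X$ sidesteps this circularity since it is manifestly $\boL\Lambda_I(R[X])$.
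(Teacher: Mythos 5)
Your proof is correct, and for the substantive ``only if'' direction it follows the same route as the paper: identify $Q_X=\boL\Lambda_I(R[X])$ as an object in the essential image, conclude it is derived $I$\+adically complete in the idealistic sense, and invoke Proposition~\ref{decaying-functions-not-idealistic-complete}(b). The differences are in the bookkeeping, and they are worth noting. For ``if'', you derive idempotence internally from Theorem~\ref{quotseparated-I-contramodule-theorem} (full faithfulness of~$\rho$ makes the counit invertible, so $\rho\lambda$ is an idempotent monad), whereas the paper instead cites the external comparison $\boL\Lambda_I\simeq\boR\Hom_R(K_\infty\spcheck(R;\s),{-})$ from \cite{PSY} and \cite{Yek3}; both work, and yours is more self-contained. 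For ``only if'', your bridge from idempotence to ``image objects are idealistic-complete'' is naturality of the unit~$\eta$, which presupposes that idempotence is witnessed by the unit transformation. The paper guards against a weaker reading of ``idempotent'' (mere existence of \emph{some} isomorphism $\boL\Lambda_I\boL\Lambda_I(C)\simeq\boL\Lambda_I(C)$, not necessarily the canonical one): it shows that for an $R$\+module $C$ in degree~$0$, the existence of any isomorphism $C\simeq\boL\Lambda_I(C)$ in $\sD(R\modl)$ already forces idealistic completeness, because $\boL\Lambda_I(C)$ is concentrated in degrees $\le0$ with $H^0=\boL_0\Lambda_I(C)$ always a quotseparated $I$\+contramodule, and then Proposition~\ref{quotseparated-reflector} identifies the canonical map. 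Your argument is valid under the standard (canonical-map) meaning of idempotence, which is also how the paper uses the term for the sequential completion functor; if you want the statement under the weakest reading, you would need to add the paper's cohomological observation.
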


\begin{proof}
 ``If'': for a weakly proregular ideal $I$, the functor $\boL\Lambda_I$
is isomorphic to the sequential derived $I$\+adic completion functor
$\boR\Hom_R(K_\infty\spcheck(R;\s),{-})\:\sD(R\modl)\rarrow
\sD(R\modl)$ \cite[Corollary~5.25]{PSY},
\cite[Theorem~3.11\,(i)\,$\Rightarrow$\,(iii)]{Yek3}.
 The sequential derived $I$\+adic completion functor is always
idempotent (see~\cite[Remark~2.23]{Yek3} or the discussion at
the end of Section~\ref{seq-secn}).

 ``Only if'': for any $R$\+module $C$, the adjunction morphism $C\rarrow
\boL\Lambda_I(C)$ induces the adjunction morphism $C\rarrow
\boL_0\Lambda_I(C)$ after the passage to the degree-zero cohomology
modules (cf.\ Proposition~\ref{quotseparated-reflector}(c)).
 In particular, the $R$\+module $C$ is a quotseparated $I$\+contramodule
if and only if the induced morphism $C\rarrow H^0(\boL\Lambda_I(C))=
\boL_0\Lambda_I(C)$ is an isomorphism.
 Hence the object $C\in\sD(R\modl)$ is derived $I$\+adically complete
in the idealistic sense if and only if $C$ is a quotseparated
$I$\+contramodule and the $R$\+modules $\boL_i\Lambda_I(C)=
H^{-i}(\boL\Lambda_I(C))$ vanish for all $i>0$.
 It follows that existence of an isomorphism $C\simeq\boL\Lambda_I(C)$
in the derived category $\sD(R\modl)$ implies that $C$ is derived
$I$\+adically complete in the idealistic sense.

 Now the functor $\boL\Lambda_I$ takes the free $R$\+module
with a countable set of generators $P_X=R[X]\in\sD(R\modl)$ to
the $R$\+module of decaying functions $Q_X=\R[[X]]\in\sD(R\modl)$.
 According to the previous paragraph, if the object $\boL\Lambda_I(Q_X)
\in\sD(R\modl)$ is isomorphic to $Q_X$ (i.~e., some isomorphism
exists), then the object $Q_X\in\sD(R\modl)$ is derived $I$\+adically
complete in the idealistic sense.
 According to
Proposition~\ref{decaying-functions-not-idealistic-complete}(b), it
follows that the ideal $I\subset R$ is weakly proregular.
\end{proof}

 Proposition~\ref{decaying-functions-not-idealistic-complete} appears to
confirm the feeling that the class of derived $I$\+adically complete
complexes in the idealistic sense, as defined in the paper~\cite{Yek3},
may be too small (when the ideal $I\subset R$ is not weakly proregular).
 Is there \emph{any} example of a nonzero derived $I$\+adically complete
complex in the idealistic sense, for an arbitrary finitely generated
ideal $I\ne R$ in a commutative ring~$R$\,?

 We would suggest the derived category $\sD(R\modl_{I\ctra}^\qs)$
as a proper replacement of the category of derived $I$\+adically
complete complexes in the idealistic sense.
 At least, the category $\sD(R\modl_{I\ctra}^\qs)$ contains
the module of decaying functions $\R[[X]]=\Lambda_I(R[X])$
as an object.

\begin{rem}
 The complicated behavior of the idealistic derived completion
functor $\boL\Lambda_I$ can be conceptualized in the following way.
 Suppose that we have chosen a finite set of generators $s_1$,~\dots,
$s_m$ for the ideal~$I$.
 Then the $I$\+adic completion $\Lambda_I(M)$ of an $R$\+module $M$,
viewed as a $\boZ[s_1,\dotsc,s_m]$\+module, is determined by
the underlying $\boZ[s_1,\dotsc,s_m]$\+module structure on $M$, and does
not depend on the $R$\+module structure.
 Still the ring $R$ dictates what kind of resolutions (viz., homotopy
flat or homotopy projective complexes of $R$\+modules, or if one wishes,
homotopy projective complexes of projective $R$\+modules) are to be used
in the construction of the derived functor~$\boL\Lambda_I$.
 When the $\boZ[s_1,\dotsc,s_m]$\+module structure of the ring $R$ is
complicated, so is the resulting derived functor.

 One can avoid this problem by resolving the ring $R$ itself.
 This approach was developed in the paper~\cite{Sh}.
 Following~\cite{Sh}, let $B^\bu$ be a graded commutative, nonnegatively
cohomologically graded DG\+ring; so in particular, $B^0$ is
a commutative ring, $B^\bu$ is a complex of $B^0$\+modules, and
$H^0(B^\bu)$ is a quotient ring of~$B^0$.
 Assume that $B^\bu$ is a homotopy flat complex of $B^0$\+modules,
and let $c_1$,~\dots, $c_m\in B^0$ be a finite sequence of elements
generating a weakly proregular ideal $E\subset B^0$.
 Let $\sD(B^\bu\modl)$ denote the derived category of (unbounded)
DG\+modules over~$B^\bu$.
 The derived $E$\+adic completion functor $\boL\Lambda_E\:
\sD(B^\bu\modl)\rarrow\sD(B^\bu\modl)$ is constructed by applying
the $E$\+adic completion functor $\Lambda_E\:M^\bu\longmapsto
\varprojlim_{n\ge1}M^\bu/E^nM^\bu$ to homotopy projective
or homotopy flat DG\+modules over~$B^\bu$.

 According to~\cite[Proposition~2.4]{Sh}, under the assumptions above
the (idealistic) derived $E$\+adic completion functor $\boL\Lambda_E$
is isomorphic to the sequential derived $E$\+adic completion functor
$\boR\Hom_{B^0}(K_\infty\spcheck(B^0;\mathbf c),{-})\:
\sD(B^\bu\modl)\rarrow\sD(B^\bu\modl)$ (where $\mathbf c$~is
a notation for the sequence $c_1$,~\dots,~$c_m$).
 It follows that $\boL\Lambda_E$ is an idempotent functor
$\sD(B^\bu\modl)\rarrow\sD(B^\bu\modl)$ \,\cite[Proposition~2.10]{Sh}.   
 The essential image of $\boL\Lambda_E$ is the full subcategory
$\sD_{\overline E\ctra}(B^\bu\modl)\subset\sD(B^\bu\modl)$ of all
DG\+modules whose cohomology $H^0(B^\bu)$\+modules are 
$\overline E$\+contra\-modules, where $\overline E\subset H^0(B^\bu)$
is the image of the ideal $E\subset B^0$ under the surjective ring
homomorphism $B^0\rarrow H^0(B^\bu)$. 

 Now given a ring $R$ with a finitely generated ideal $I$, one can
construct a DG\+ring $B^\bu$ with a finitely generated ideal
$E\subset B^0$ satisfying the assumptions above, together with
a quasi-isomorphism of DG\+rings $B^\bu\rarrow R$ such that
$I=\overline E$ (this is a particular case 
of~\cite[Proposition~2.2]{Sh}).
 Then the derived category $\sD(R\modl)$ is equivalent to
$\sD(B^\bu\modl)$, so one can view $\boL\Lambda_E$ as a functor
$\sD(R\modl)\rarrow\sD(R\modl)$.
 This functor is isomorphic to the sequential derived completion
functor $\boR\Hom_R(K_\infty\spcheck(R,\s),{-})\:\sD(R\modl)\rarrow
\sD(R\modl)$ (where $s_1$,~\dots, $s_m\in R$ are the images of
the elements $c_1$,~\dots, $c_m\in B^0$ under the surjective
ring homomorphism $B^0\rarrow H^0(B^\bu)=R$).
 The triangulated equivalence $\sD(B^\bu\modl)\simeq\sD(R\modl)$
identifies the full subcategory $\sD_{\overline E\ctra}(B^\bu\modl)
\subset\sD(B^\bu\modl)$ with the full subcategory
$\sD_{I\ctra}(R\modl)\subset\sD(R\modl)$.
\end{rem}

\Section{Idealistically Derived Torsion Complexes}

 The results of this section are dual-analogous to those of
the previous one.
 To begin with, let us recall the notation of
Theorem~\ref{I-torsion-theorem}.
 The inclusion of abelian categories $R\modl_{I\tors}\rarrow R\modl$
induces a triangulated functor
$$
 \mu\:\sD(R\modl_{I\tors}) \lrarrow \sD(R\modl),
$$
which has a right adjoint functor
$$
 \gamma\:\sD(R\modl)\lrarrow\sD(R\modl_{I\tors}).
$$
 Porta--Shaul--Yekutieli in~\cite[Section~3]{PSY} and
Yekutieli in~\cite[Section~1]{Yek3} consider the functor
$$
 \boR\Gamma_I\:\sD(R\modl)\lrarrow\sD(R\modl),
$$
which is called the \emph{idealistic derived $I$\+torsion functor}
in~\cite{Yek3}.
 Following the construction of the functor~$\gamma$ discussed in
the proof of Theorem~\ref{I-torsion-theorem}, the functor $\boR\Gamma_I$
is the composition of the two adjoint functors, $\boR\Gamma_I=
\mu\circ\gamma$,
$$
 \sD(R\modl)\overset\gamma\lrarrow\sD(R\modl_{I\tors})
 \overset\mu\lrarrow\sD(R\modl).
$$

 The point is that applying the functor $\Gamma_I$ to every term of
a homotopy injective complex of $R$\+modules $J^\bu$ produces a complex
of $I$\+torsion $R$\+modules.
 Such a complex can be naturally considered as an object of
the category $\sD(R\modl_{I\tors})$.
 One can also view $\Gamma_I(J^\bu)$ as an object of the derived
category of $R$\+modules; this means applying the functor~$\mu$
to the object~$\gamma(J^\bu)$.

\begin{rem}
 The homotopy projective (``K\+projective''), homotopy injective, and
suchlike resolutions are quintessentially the technique for
constructing unbounded derived functors of infinite homological
dimension.

 The sequential derived torsion and completion functors clearly have
finite homological dimension, not exceeding the minimal number of
generators of the ideal $I\subset R$.
 Hence, in the weakly proregular case, the homological dimension of
the idealistic derived torsion and completion functors is also
finite~\cite[Corollaries~4.28 and~5.27]{PSY}.
 We \emph{do not know} whether the homological dimensions of
the idealistic derived torsion and completion functors need to be
finite for an arbitrary finitely generated ideal $I\subset R$, but
we do not expect them to be.
 It is precisely for this reason that we (following~\cite{PSY}
and~\cite{Yek3}) are using homotopy projective/flat/injective
resolutions in the constructions of the idealistic derived functors.

 If we knew these functors to have finite homological dimension,
homotopy adjusted resolutions would not be needed for their
construction, as the discussion in~\cite[Sections~1 and~2]{Pmgm}
illustrates (see~\cite[Lemma~1.5]{VY} for a precise statement).
\end{rem}

 Following~\cite[Section~1]{Yek3}, a complex of $R$\+modules $M^\bu$ is
said to be \emph{derived $I$\+torsion in the idealistic sense} if
the adjunction morphism
$$
 \boR\Gamma_I(M^\bu)=\mu\gamma(M^\bu)\lrarrow M^\bu
$$
is an isomorphism in $\sD(R\modl)$.
 Let us denote the full subcategory of derived $I$\+torsion complexes
in the idealistic sense by
$$
 \sD(R\modl)^{ideal}_{I\tors}\,\subset\,\sD(R\modl)
$$

 Similarly, one can denote by $\sD(R\modl_{I\tors})^{ideal}$ the full
subcategory of all complexes of $I$\+torsion $R$\+modules $N^\bu$
for which the adjunction morphism $N^\bu\rarrow\gamma\mu(N^\bu)$ is
an isomorphism in $\sD(R\modl_{I\tors})$.
 Then $\sD(R\modl)_{I\tors}^{ideal}\subset\sD(R\modl)$ and
$\sD(R\modl_{I\tors})^{ideal}\subset\sD(R\modl_{I\tors})$ are
the maximal two full (triangulated) subcategories in the respective
triangulated categories in restriction to which the functors~$\mu$
and~$\gamma$ are mutually inverse equivalences,
$$
 \sD(R\modl_{I\tors})\,\supset\,\sD(R\modl_{I\tors})^{ideal}
 \,\simeq\,\sD(R\modl)_{I\tors}^{ideal}\,\subset\,\sD(R\modl).
$$
 
\begin{lem}
 For any finitely generated ideal $I\subset R$, all derived $I$\+torsion
complexes in the idealistic sense are derived $I$\+torsion in
the sequential sense.
\end{lem}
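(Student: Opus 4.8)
The plan is to argue dual-analogously to the proof of the companion statement for derived $I$\+adically complete complexes given just above. All of the real content is already packaged into Lemma~\ref{sequentially-torsion}, which identifies the derived $I$\+torsion complexes in the sequential sense with the objects of $\sD(R\modl)$ whose cohomology $R$\+modules are $I$\+torsion. So the task reduces to checking that a derived $I$\+torsion complex in the idealistic sense has $I$\+torsion cohomology modules.

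First I would recall that, by the very definition, a complex $M^\bu$ that is derived $I$\+torsion in the idealistic sense lies in the essential image of the functor $\mu\:\sD(R\modl_{I\tors})\rarrow\sD(R\modl)$: the adjunction morphism $\boR\Gamma_I(M^\bu)=\mu\gamma(M^\bu)\rarrow M^\bu$ is an isomorphism in $\sD(R\modl)$. Next, computing $\boR\Gamma_I(M^\bu)$ by applying the functor $\Gamma_I$ to a homotopy injective resolution $J^\bu$ of $M^\bu$, one obtains a complex $\Gamma_I(J^\bu)$ all of whose terms are $I$\+torsion $R$\+modules. Since $R\modl_{I\tors}$ is a Serre subcategory of $R\modl$, and in particular is closed under subobjects and quotient objects, the cohomology modules $H^n(\Gamma_I(J^\bu))$ are $I$\+torsion for every $n\in\boZ$. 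Hence the cohomology modules of $M^\bu\simeq\Gamma_I(J^\bu)$ are $I$\+torsion, and by the equivalence of conditions~(1) and~(3) in Lemma~\ref{sequentially-torsion} the complex $M^\bu$ is derived $I$\+torsion in the sequential sense. Equivalently, and somewhat more conceptually: the factorization~\eqref{mu-chi-upsilon-factorization} exhibits the essential image of $\mu$ inside $\sD_{I\tors}(R\modl)$, which by Lemma~\ref{sequentially-torsion} is precisely the category of derived $I$\+torsion complexes in the sequential sense.

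I do not anticipate any genuine obstacle here. The only point deserving a bit of care is the use of unbounded homotopy injective (``K\+injective'') resolutions, so that the computation of $\boR\Gamma_I$ is legitimate for an arbitrary complex $M^\bu\in\sD(R\modl)$ rather than only for bounded below ones — but this is exactly the setup recalled in the paragraph preceding the statement.
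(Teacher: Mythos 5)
Your proposal is correct and follows essentially the same route as the paper: it identifies the idealistic derived $I$\+torsion complexes with objects in the essential image of $\mu$, observes that $\boR\Gamma_I(M^\bu)=\Gamma_I(J^\bu)$ is a complex of $I$\+torsion modules whose cohomology is therefore $I$\+torsion, and concludes via Lemma~\ref{sequentially-torsion} (equivalently, via the factorization~\eqref{mu-chi-upsilon-factorization}). No issues.
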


\begin{proof}
 All derived $I$\+torsion complexes in the idealistic sense belong to
the essential image of the functor~$\mu$, which is contained in
the full subcategory $\sD_{I\tors}(R\modl)\subset\sD(R\modl)$ in view
of the factorization~\eqref{mu-chi-upsilon-factorization}.
 Simply put, a derived $I$\+torsion complex $M^\bu$ in the idealistic
sense is quasi-isomorphic to a complex of $I$\+torsion $R$\+modules
$\boR\Gamma_I(M^\bu)$, whose cohomology modules are obviously
$I$\+torsion, too.
 It remains to recall that, by Lemma~\ref{sequentially-torsion},
\,$\sD_{I\tors}(R\modl)\subset\sD(R\modl)$ is precisely the full
subcategory of derived $I$\+torsion complexes in the sequential sense.
\end{proof}

\begin{prop}
 Given an injective $R$\+module $J$, consider the complex of
$R$\+modules $N^\bu=K_\infty\spcheck(R;\s)\ot_R J$.
 Then \par
\textup{(a)} the complex $N^\bu$ is derived $I$\+torsion in
the sequential sense; \par
\textup{(b)} the ideal $I\subset R$ is weakly proregular if and only if
the complex $N^\bu$ is derived $I$\+torsion in the idealistic sense for
every injective $R$\+module~$J$.
\end{prop}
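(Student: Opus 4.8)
The plan is to dualize the proof of Proposition~\ref{RHom-not-idealistic-complete}, using the characterization of weak proregularity via injective modules from Theorem~\ref{weak-proregularity-via-injectives} in place of the free-module one. Write $\Theta = K_\infty\spcheck(R;\s)\ot_R{-}\:\sD(R\modl)\rarrow\sD(R\modl)$ for the sequential derived $I$\+torsion endofunctor. As recalled in Section~\ref{seq-secn}, $\Theta$ factors as $\sD(R\modl)\overset{\theta}{\rarrow}\sD_{I\tors}(R\modl)\overset{\upsilon}{\rarrow}\sD(R\modl)$, where $\upsilon$ is the fully faithful inclusion and $\theta$ is the coreflector right adjoint to it; thus $\Theta$ is idempotent and its essential image is $\sD_{I\tors}(R\modl)$, which by Lemma~\ref{sequentially-torsion} is exactly the class of complexes derived $I$\+torsion in the sequential sense. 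Part~(a) is then immediate, since $N^\bu = \Theta(J)$ lies in the essential image of $\Theta$. For the ``only if'' half of~(b), when $I$ is weakly proregular, Theorem~\ref{I-torsion-theorem} gives an equivalence $\chi$ identifying $\mu$ with $\upsilon$, hence $\boR\Gamma_I = \mu\gamma$ with $\Theta = \upsilon\theta$, compatibly with their adjunction morphisms to the identity functor; so every sequentially derived $I$\+torsion complex, $N^\bu$ included, is derived $I$\+torsion in the idealistic sense. (Alternatively one can cite \cite{PSY} or \cite{Yek3} for the coincidence of the two derived torsion functors when $I$ is weakly proregular.)

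The ``if'' half of~(b) is the substantive direction, and I would prove it dually to the ``if'' half of Proposition~\ref{RHom-not-idealistic-complete}. The formal input is the identity $\gamma\,\Theta = \gamma$, dual to the identity $\lambda\iota\eta = \lambda$ used there: since $\upsilon$ is fully faithful, the right adjoint of $\chi$ is the composite $\gamma\upsilon$, so the right adjoint of $\mu = \upsilon\chi$ is $(\gamma\upsilon)\circ\theta = \gamma\,\Theta$ --- but this right adjoint is $\gamma$. Now suppose that $N^\bu = \Theta(J)$ is derived $I$\+torsion in the idealistic sense for every injective $R$\+module~$J$. Since $J$ is injective, hence homotopy injective, the functor $\gamma$ computes $\gamma(J) = \Gamma_I(J)$, whence
$$
 \boR\Gamma_I(N^\bu) \,=\, \mu\gamma\,\Theta(J) \,=\, \mu\gamma(J)
 \,=\, \Gamma_I(J),
$$
a complex concentrated in cohomological degree~$0$. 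On the other hand, the idealistic hypothesis provides an isomorphism $\boR\Gamma_I(N^\bu)\simeq N^\bu$ in $\sD(R\modl)$. Therefore $K_\infty\spcheck(R;\s)\ot_RJ$ is isomorphic in $\sD(R\modl)$ to a single module sitting in degree~$0$, which forces $H^k(K_\infty\spcheck(R;\s)\ot_RJ) = 0$ for all $k\ne 0$, in particular for all $k>0$. Since this holds for every injective $R$\+module~$J$, Theorem~\ref{weak-proregularity-via-injectives} lets us conclude that $I\subset R$ is weakly proregular.

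The step I expect to require the most care is checking that these formal manipulations with adjoint functors are legitimate in the unbounded derived categories involved: that $\gamma$ exists as an everywhere-defined functor (computable via homotopy injective resolutions), that the right adjoint of $\chi$ is indeed $\gamma\upsilon$ so that $\gamma\,\Theta = \gamma$, and --- for the ``only if'' direction --- that the equivalence of Theorem~\ref{I-torsion-theorem} genuinely identifies $\boR\Gamma_I$ with $\Theta$ together with their adjunction morphisms. All of this is dual to material already present in Section~\ref{full-and-faithfulness-secn} and in the proof of Proposition~\ref{RHom-not-idealistic-complete}, so I do not anticipate any essential difficulty; once $\gamma\,\Theta = \gamma$ is in hand, what remains is the brief cohomology-vanishing computation above together with the appeal to Theorem~\ref{weak-proregularity-via-injectives}.
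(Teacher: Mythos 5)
Your proposal is correct and follows essentially the same route as the paper: both establish the identity $\gamma\upsilon\theta=\gamma$ from the factorization $\mu=\upsilon\chi$ and full faithfulness of~$\upsilon$, deduce that the idealistic hypothesis identifies $K_\infty\spcheck(R;\s)\ot_RJ$ with $\Gamma_I(J)$ in $\sD(R\modl)$, and conclude weak proregularity from the resulting vanishing of $H^k(K_\infty\spcheck(R;\s)\ot_RJ)$ for $k>0$. The only cosmetic difference is that you invoke Theorem~\ref{weak-proregularity-via-injectives} directly where the paper cites its immediate consequence, Corollary~\ref{when-weakly-proregular-in-terms-of-injectives}.
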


\begin{proof}
 Part~(a): for any complex of $R$\+modules $M^\bu$, the complex
$K_\infty\spcheck(R;\s)\ot_R M^\bu$ is derived $I$\+torsion in
the sequential sense, because the sequential derived $I$\+torsion
functor $K_\infty\spcheck(R;\s)\ot_R{-}$ is idempotent
(see formula~\eqref{Koszul-tensor-idempotent} in the end
of Section~\ref{seq-secn}).

 If the ideal $I$ is weakly proregular, then a complex of $R$\+modules
is derived $I$\+torsion in the idealistic sense if and only if it is
derived $I$\+torsion in the sequential sense
(see~\cite[Corollary~4.26]{Pmgm} and~\cite[Section~3]{Yek3},
or Theorem~\ref{I-torsion-theorem} above).
 Hence the ``only if'' implication in part~(b) follows from part~(a).

 Part~(b), ``if'': in the notation of Theorem~\ref{I-torsion-theorem}
and its proof, we have $N^\bu=\theta(J)=\upsilon\theta(J)$.
 Notice that the right adjoint functor to the functor
$\chi\:\sD(R\modl_{I\tors})\rarrow\sD_{I\tors}(R\modl)$ is computable
as the composition $\sD_{I\tors}(R\modl)\overset\upsilon\rarrow
\sD(R\modl)\overset\gamma\rarrow\sD(R\modl_{I\tors})$, since
the functor~$\upsilon$ is fully faithful.
 It follows that the composition of functors $(\gamma\upsilon)\theta$
is right adjoint to the functor $\upsilon\chi=\mu$; hence
$\gamma\upsilon\theta=\gamma$.

 Now assume that the complex $N^\bu$ is derived $I$\+torsion in
the idealistic sense for every injective $R$\+module~$J$.
 Then the adjunction morphism
$$
 \mu\gamma(J)=\mu\gamma\upsilon\theta(J)=\mu\gamma(N^\bu)\lrarrow
 N^\bu=\upsilon\theta(J)
$$
is an isomorphism in $\sD(R\modl)$.
 In other words, this means that the canonical morphism
$$
 \Gamma_I(J)=\mu\gamma(J)\lrarrow\upsilon\theta(J)=
 K_\infty\spcheck(R;\s)\ot_R J
$$
is an isomorphism in $\sD(R\modl)$.
 According to
Corollary~\ref{when-weakly-proregular-in-terms-of-injectives},
it follows that the ideal $I\subset R$ is weakly proregular.
\end{proof}

 The following result, which is dual-analogous to
Proposition~\ref{decaying-functions-not-idealistic-complete},
was obtained by Vyas and Yekutieli in~\cite[Theorem~0.3]{VY}.
 We provide a sketch of proof using our methods.

\begin{prop} \label{torsion-injective-not-idealistic-torsion}
 Given an injective $R$\+module $J$, consider the $R$\+module
$E=\Gamma_I(J)$.
 Then \par
\textup{(a)} the $R$\+module $E$ is a derived $I$\+torsion complex
in the sequential sense; \par
\textup{(b)} the ideal $I\subset R$ is weakly proregular if and only
if the $R$\+module $E$ is a derived $I$\+torsion complex in
the idealistic sense for every injective $R$\+module~$J$.
\end{prop}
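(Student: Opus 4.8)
The plan is to mirror the proof of Proposition~\ref{decaying-functions-not-idealistic-complete}, replacing the category $R\modl_{I\ctra}^\qs$ of quotseparated $I$\+contramodules by the Grothendieck category $R\modl_{I\tors}$ of $I$\+torsion modules, the adjoint pair $(\lambda,\rho)$ by $(\mu,\gamma)$, and, crucially, projective generators by injective cogenerators.

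Part~(a) is immediate: $E=\Gamma_I(J)$ is an $I$\+torsion $R$\+module, so, viewed as a complex concentrated in degree~$0$, its only cohomology module is $I$\+torsion; by Lemma~\ref{sequentially-torsion} this means $E\in R\modl_{I\tors}\subset\sD_{I\tors}(R\modl)$ is a derived $I$\+torsion complex in the sequential sense. I would also record here, for later use, two facts. First, $E$ is an \emph{injective} object of the abelian category $R\modl_{I\tors}$ (the functor $\Gamma_I$ is right adjoint to the exact inclusion $R\modl_{I\tors}\rarrow R\modl$, hence carries injective $R$\+modules to injective $I$\+torsion modules). Second, conversely, every injective object of $R\modl_{I\tors}$ is a direct summand of $\Gamma_I(J)$ for some injective $R$\+module~$J$: embed it into an injective $R$\+module, note that its image lies in the maximal $I$\+torsion submodule, and split the resulting monomorphism in $R\modl_{I\tors}$.

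For part~(b), the ``only if'' direction follows from~(a): if $I$ is weakly proregular, then Theorem~\ref{I-torsion-theorem} shows $\mu$ is fully faithful with essential image $\sD_{I\tors}(R\modl)$, so the counit $\boR\Gamma_I=\mu\gamma\to\mathrm{id}$ is an isomorphism precisely on $\sD_{I\tors}(R\modl)$; thus every derived $I$\+torsion complex in the sequential sense is one in the idealistic sense, in particular~$E$. For the ``if'' direction, write $E=\gamma(J)=\Gamma_I(J)$ and assume that for every injective $R$\+module $J$ the object $E$, viewed in $\sD(R\modl)$ as $\mu(E)$, is derived $I$\+torsion in the idealistic sense; equivalently, since $\boR\Gamma_I=\mu\gamma$, the counit $\epsilon_{\mu(E)}\:\mu\gamma\mu(E)\rarrow\mu(E)$ is an isomorphism. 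The triangle identity for $\mu\dashv\gamma$ says the composite $\mu(E)\rarrow\mu\gamma\mu(E)\overset{\epsilon_{\mu(E)}}\rarrow\mu(E)$ of $\mu$ applied to the unit with the counit is the identity, so $\mu$ applied to the unit $\eta_E\:E\rarrow\gamma\mu(E)$ is an isomorphism. Since the inclusion $R\modl_{I\tors}\rarrow R\modl$ is exact and sends nonzero objects to nonzero objects, the induced functor $\mu$ is conservative, whence $\eta_E$ itself is an isomorphism in $\sD(R\modl_{I\tors})$. Consequently, writing $\sA=R\modl$ and $\sB=R\modl_{I\tors}$ for brevity, for every $N\in\sB$ and every $k>0$ one has
$$
 \Hom_{\sD(\sA)}(\mu(N),\mu(E)[k])\simeq
 \Hom_{\sD(\sB)}(N,\gamma\mu(E)[k])\simeq
 \Hom_{\sD(\sB)}(N,E[k])=0,
$$
the last equality because $E$ is an injective object of~$\sB$. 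As the class of idealistically $I$\+torsion complexes is closed under direct summands and every injective object of $\sB$ is a summand of some $\Gamma_I(J)$, this $\Ext$\+vanishing holds with $\mu(E)$ replaced by $\mu$ of an arbitrary injective object of~$\sB$.

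Finally I would invoke the evident dual of Lemma~\ref{derived-full-and-faithfulness-lemma}, applied to the exact, fully faithful inclusion $R\modl_{I\tors}\rarrow R\modl$ whose source has enough injectives: the equivalence of conditions~(1) and~(2) there is self\+dual, while condition~(3) is to be replaced by the vanishing of $\Ext^k_{R\modl}(\mu(C),\mu(E))$ for all $C\in\sB$, all injective $E\in\sB$, and all $k>0$, which one checks by computing $\Ext_{R\modl}$ through an injective resolution of the \emph{second} argument taken inside $\sB$ (this is legitimate since $R\modl$ has enough injectives). This gives that $\mu^\bs\:\sD^\bs(R\modl_{I\tors})\rarrow\sD^\bs(R\modl)$ is fully faithful, and then the converse half of Theorem~\ref{I-torsion-theorem} shows that the ideal $I$ is weakly proregular. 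The one step requiring genuine care is exactly this last one: Lemma~\ref{derived-full-and-faithfulness-lemma} is stated for an abelian category with enough projectives, whereas $R\modl_{I\tors}$ has none in general, so one must verify that testing $\Ext$\+vanishing \emph{into} the injective objects of the torsion category (in place of \emph{out of} projective objects) still detects full\+and\+faithfulness of~$\mu^\bs$.
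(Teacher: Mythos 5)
Your proposal is correct and follows essentially the same route as the paper's own sketch of proof: the triangle identity plus conservativity of $\mu$ to show the unit $E\rarrow\gamma\mu(E)$ is an isomorphism, the adjunction computation of $\Hom_{\sD(R\modl)}(\mu(N),\mu(E)[k])$, the observation that every injective object of $R\modl_{I\tors}$ is a summand of some $\Gamma_I(J)$, the dual of Lemma~\ref{derived-full-and-faithfulness-lemma}, and finally the converse half of Theorem~\ref{I-torsion-theorem}. You also correctly identify, and justify, the one point the paper leaves implicit, namely that the dualization of Lemma~\ref{derived-full-and-faithfulness-lemma} is legitimate because $R\modl_{I\tors}$ has enough injectives and $\Ext_{R\modl}$ can be computed by resolving the second argument.
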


\begin{proof}[Sketch of proof]
 Part~(a): the $R$\+module $E$ is $I$\+torsion by definition;
in fact, $E$ is an injective object in the abelian category
$R\modl_{I\tors}$.
 Hence $E\in R\modl_{I\tors}\subset\sD_{I\tors}(R\modl)$ is
a derived $I$\+torsion complex in the sequential sense
by Lemma~\ref{sequentially-torsion}.
 The ``only if'' assertion in part~(b) follows from part~(a).

 Part~(b), ``if'': let us consider the $R$\+module $E$ as an object
of the derived category $\sD(R\modl_{I\tors})$.
 Then the same $R$\+module viewed as an object of the derived
category $\sD(R\modl)$ is denoted by $\mu(E)$.
 Assume that $\mu(E)$ is a derived $I$\+torsion complex in
the idealistic sense, for every injective $R$\+module~$J$.
 Then the adjunction morphism $\mu\gamma\mu(E)
\rarrow\mu(E)$ is an isomorphism.

 It follows that the morphism $\mu(E)\rarrow\mu\gamma\mu(E)$ obtained
by applying~$\mu$ to the adjunction morphism $E\rarrow\gamma\mu(E)$
is an isomorphism, too.
 Since the triangulated functor $\mu\:\sD(I\modl_{I\tors})\rarrow
\sD(R\modl)$ is conservative (taking complexes with nonzero cohomology
to complexes with nonzero cohomology), we can conclude that
the adjunction morphism $E\rarrow\gamma\mu(E)$ is an isomorphism in
$\sD(R\modl_{I\tors})$.

 Denote the abelian categories involved by $\sA=R\modl$ and
$\sT=R\modl_{I\tors}$; and let $T\in\sT$ be an arbitrary object.
 Viewing $T$ as an object of the derived category $\sD(\sT)$,
we can compute
$$
 \Hom_{\sD(\sA)}(\mu(T),\mu(E)[k])\simeq
 \Hom_{\sD(\sT)}(T,\gamma\mu(E)[k])\simeq
 \Hom_{\sD(\sT)}(T,E[k])=0
$$
for all $T\in\sT$ and $k>0$.

 There are enough injectives in the abelian category
$\sT=R\modl_{I\tors}$, and any injective object $L$ in $\sT$ is a direct
summand of an object $E=\Gamma_I(J)$ for some injective $R$\+module~$J$.
 Hence we have $\Ext^k_\sA(T,L)=0$ for all objects $T\in\sT$,
all injective objects $L\in\sT$, and all $k>0$.
 By the dual version of Lemma~\ref{derived-full-and-faithfulness-lemma},
we can conclude that the triangulated functor $\mu^\bs\:
\sD^\bs(R\modl_{I\tors})\rarrow\sD^\bs(R\modl)$ is fully faithful.
 According to Theorem~\ref{I-torsion-theorem}, it follows that
the ideal $I\subset R$ is weakly proregular.
\end{proof}

\begin{cor}
 The idealistic derived $I$\+torsion functor\/ $\boR\Gamma_I\:
\sD(R\modl)\rarrow\sD(R\modl)$ is idempotent if and only if
the ideal $I\subset R$ is weakly proregular.
\end{cor}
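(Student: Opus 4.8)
The plan is to transcribe the proof of Corollary~\ref{idealistic-completion-idempotence}, with the module of decaying functions replaced by a torsion injective module and with Proposition~\ref{torsion-injective-not-idealistic-torsion} playing the role that Proposition~\ref{decaying-functions-not-idealistic-complete} played there; so most of the real content is already in that proposition (due to Vyas and Yekutieli), and what remains is a short formal deduction. For the ``if'' direction I would recall that, when the ideal $I$ is weakly proregular, the idealistic derived $I$-torsion functor $\boR\Gamma_I$ is isomorphic to the sequential derived $I$-torsion functor $K_\infty\spcheck(R;\s)\ot_R{-}$ (see Theorem~\ref{I-torsion-theorem}, \cite[Corollary~4.26]{Pmgm}, or~\cite[Section~3]{Yek3}), and the sequential derived $I$-torsion functor is always idempotent, being a coreflector onto $\sD_{I\tors}(R\modl)$ --- or, concretely, by the quasi-isomorphism~\eqref{Koszul-tensor-idempotent} at the end of Section~\ref{seq-secn}.

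For the ``only if'' direction, the first step is the following observation. For an $R$-module $M$, the adjunction morphism $\boR\Gamma_I(M)\rarrow M$ becomes, after passing to degree-zero cohomology, the inclusion $\Gamma_I(M)\hookrightarrow M$ of the maximal $I$-torsion submodule; moreover, $\boR\Gamma_I(M)$ is concentrated in nonnegative cohomological degrees. Hence an $R$-module $M$, viewed as an object of $\sD(R\modl)$, is derived $I$-torsion in the idealistic sense if and only if $M$ is an $I$-torsion module and $H^i(\boR\Gamma_I(M))=0$ for all $i>0$. Since $H^0(\boR\Gamma_I(M))=\Gamma_I(M)$ is always an $I$-torsion module, the mere existence of an isomorphism $M\simeq\boR\Gamma_I(M)$ in $\sD(R\modl)$ already forces $M$ to be $I$-torsion (being isomorphic to $\Gamma_I(M)$) and forces $H^i(\boR\Gamma_I(M))=0$ for $i>0$; so it implies that $M$ is derived $I$-torsion in the idealistic sense.

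The second step applies this with $M=E=\Gamma_I(J)$ for an arbitrary injective $R$-module $J$, noting that $\boR\Gamma_I(J)=\Gamma_I(J)=E$ since $J$ is its own injective resolution. If $\boR\Gamma_I$ is idempotent, then $\boR\Gamma_I(E)=\boR\Gamma_I(\boR\Gamma_I(J))\simeq\boR\Gamma_I(J)=E$, so by the first step $E$ is derived $I$-torsion in the idealistic sense, and this holds for every injective $R$-module $J$; Proposition~\ref{torsion-injective-not-idealistic-torsion}(b) then yields weak proregularity of $I$. The only point requiring a little care is the passage in the first step from an abstract isomorphism $M\simeq\boR\Gamma_I(M)$ to the statement that the adjunction morphism itself is an isomorphism --- but this is immediate once one knows $M$ is $I$-torsion, since then $\boR\Gamma_I(M)$ is concentrated in degree zero and the adjunction morphism is just the identity inclusion $\Gamma_I(M)=M\hookrightarrow M$ there. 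Everything else is formal.
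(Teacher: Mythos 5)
Your proposal is correct and is essentially the paper's own argument: the paper's proof of this corollary reads simply ``Dual-analogous to Corollary~\ref{idealistic-completion-idempotence}'', and your write-up carries out exactly that dualization --- the ``if'' direction via the identification of $\boR\Gamma_I$ with the (idempotent) sequential torsion functor in the weakly proregular case, and the ``only if'' direction by observing that an abstract isomorphism $M\simeq\boR\Gamma_I(M)$ already forces $M$ to be derived $I$\+torsion in the idealistic sense, applied to $M=\Gamma_I(J)=\boR\Gamma_I(J)$ and concluded via Proposition~\ref{torsion-injective-not-idealistic-torsion}(b). The only difference is that you have spelled out the details the paper leaves implicit, and they are all correct.
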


\begin{proof}
 Dual-analogous to Corollary~\ref{idealistic-completion-idempotence}.
\end{proof}

 Once again, the feeling is that the class of derived $I$\+torsion
complexes in the idealistic sense, as defined in the paper~\cite{Yek3},
may be too small.
 We ask the same question: is there \emph{any} example of a nonzero
derived $I$\+torsion complex in the idealistic sense, for an arbitrary
finitely generated ideal $I\ne R$ in a commutative ring~$R$\,?

 We would suggest the derived category $\sD(R\modl_{I\tors})$ as
a proper replacement of the category of derived $I$\+torsion complexes
in the idealistic sense.
 At least, the category $\sD(R\modl_{I\tors})$ contains all
$I$\+torsion $R$\+modules as objects.

\Section{Digression: Adic Flatness and Weak Proregularity}

 In this section, we use the occasion to provide the precise
formulation and a proof of a result of the present author mentioned
by Yekutieli in~\cite[Remark~4.12]{Yek2}.
 This is closely related to the main results of this paper on
the technical level.

 Following~\cite[Definition~4.2]{Yek2}, we say that an $R$\+module
$F$ is \emph{$I$\+adically flat} if $\Tor^R_k(N,F)=0$ for all
$I$\+torsion $R$\+modules $N$ and all $k>0$.
 (In the terminology of~\cite[Definition~2.6.1]{SS}, such modules
are called ``relatively-$I$-flat''.)

 In the same spirit, the $R$\+modules satisfying the equivalent
conditions of the next proposition could be called ``$I$\+adically
projective''.

\begin{prop} \label{adically-projective}
 Let $F$ be an $R$\+module.
 Then the following three conditions are equivalent:
\begin{enumerate}
\item the $R$\+module $F$ is $I$\+adically flat and the $R/I$\+module
$F/IF$ is projective;
\item $\Ext^k_R(F,D)=0$ for all $R/I$\+modules $D$ and all $k>0$;
\item $\Ext^k_R(F,C)=0$ for all $I$\+contramodule $R$\+modules $C$ and
all $k>0$.
\end{enumerate}
\end{prop}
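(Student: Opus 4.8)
The plan is to prove the cycle $(3)\Rightarrow(2)\Rightarrow(1)\Rightarrow(3)$. The implication $(3)\Rightarrow(2)$ is immediate once one notes that every $R/I$-module $D$ is an $I$-contramodule $R$-module: for $s\in I$ the map $s\:D\to D$ is zero, so $\boR\Hom_R(R[s^{-1}],D)\simeq\boR\varprojlim_n(D\overset{s^n}\larrow D)$ is the derived limit of a pro-zero tower, hence vanishes, giving $\Hom_R(R[s^{-1}],D)=0=\Ext^1_R(R[s^{-1}],D)$; by~\cite[Theorem~5.1]{Pcta} (it suffices to check this on a set of generators of~$I$) the module $D$ is an $I$-contramodule.

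For $(1)\Leftrightarrow(2)$ I would use change of rings along $R\to R/I$. For an $R/I$-module $D$ one has $\boR\Hom_R(F,D)\simeq\boR\Hom_{R/I}(R/I\ot_R^\boL F,\>D)$, whence a spectral sequence $E_2^{p,q}=\Ext^p_{R/I}(\Tor^R_q(R/I,F),D)\Rightarrow\Ext^{p+q}_R(F,D)$. Assuming~(2) and letting $D$ range over injective $R/I$-modules, the rows with $p>0$ vanish, so $\Ext^n_R(F,D)=\Hom_{R/I}(\Tor^R_n(R/I,F),D)$; vanishing for all $n>0$ and all such $D$ forces $\Tor^R_n(R/I,F)=0$ for $n>0$. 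With this the spectral sequence collapses to $\Ext^n_R(F,D)\simeq\Ext^n_{R/I}(F/IF,D)$ for every $R/I$-module $D$, so~(2) gives that $F/IF$ is projective over $R/I$. To pass from $\Tor^R_{>0}(R/I,F)=0$ to $I$-adic flatness: any finitely generated $I$-torsion module $N$ is killed by some $I^j$ and carries the finite filtration $N\supseteq IN\supseteq\dotsb\supseteq I^jN=0$ whose subquotients are $R/I$-modules; base change along $R\to R/I$ (legitimate since $\Tor^R_{>0}(R/I,F)=0$) together with flatness of $F/IF$ over $R/I$ kills $\Tor^R_{>0}$ of each subquotient against $F$, hence of $N$, and filtered colimits then give $\Tor^R_n(N,F)=0$ for all $I$-torsion $N$ and $n>0$, i.e.~(1). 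Conversely $(1)$ gives $\Tor^R_{>0}(R/I,F)=0$ at once (as $R/I$ is $I$-torsion), and the collapsed spectral sequence yields~(2); as a byproduct one also gets $\Tor^R_{>0}(R/I^j,F)=0$ and $F/I^jF$ flat over $R/I^j$ for all~$j$.

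The substantive implication is $(1)\Rightarrow(3)$. The key technical input is the lemma: \emph{if $A$ is a ring, $\mathfrak a\subset A$ a nilpotent ideal, $G$ a flat $A$-module and $G/\mathfrak aG$ a projective $A/\mathfrak a$-module, then $G$ is a projective $A$-module} (realize $G/\mathfrak aG$ as a direct summand of a free $A/\mathfrak a$-module, lift the corresponding idempotent modulo the nilpotent ideal of matrices with entries in $\mathfrak a$ to split off a projective $A$-module $\Phi$ with $\Phi/\mathfrak a\Phi\cong G/\mathfrak aG$, pick $\phi\:\Phi\to G$ lifting this isomorphism; then $\phi$ is surjective by Nakayama and its kernel $N$ satisfies $N=\mathfrak aN$ by flatness of $G$, hence $N=0$). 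Applying this with $A=R/I^j$, $\mathfrak a=I/I^j$ and $G=F/I^jF$ — whose flatness over $R/I^j$ and the projectivity of $F/IF$ over $R/I$ were secured above — shows $F/I^jF$ is projective over $R/I^j$ for every $j\ge1$. Now let $C$ be an $I$-contramodule. If $C$ is $I$-adically separated and complete, then $C=\varprojlim_n C/I^nC$ with surjective transition maps, so $C$ is also a homotopy limit and $\boR\Hom_R(F,C)\simeq\boR\varprojlim_n\boR\Hom_R(F,C/I^nC)$; change of rings (using $\Tor^R_{>0}(R/I^n,F)=0$) and projectivity of $F/I^nF$ over $R/I^n$ give $\boR\Hom_R(F,C/I^nC)=\Hom_{R/I^n}(F/I^nF,C/I^nC)$ concentrated in degree~$0$, and the same projectivity makes the tower $(\Hom_R(F,C/I^nC))_n$ have surjective transition maps, so its $\varprojlim^1$ vanishes and $\boR\Hom_R(F,C)=\Hom_R(F,C)$, whence $\Ext^k_R(F,C)=0$ for $k>0$. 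For a general $I$-contramodule $C$, Proposition~\ref{extension-of-two} presents it as an extension of two quotseparated $I$-contramodules, and every quotseparated $I$-contramodule is a quotient $L\twoheadrightarrow C'$ with $L$ separated and complete whose kernel, being a submodule of $L$ and an $I$-contramodule (Lemma~\ref{I-contramodule-R-modules-lemma}), is again separated and complete; two applications of the long exact sequence of $\Ext_R(F,-)$ reduce the vanishing of $\Ext^{>0}_R(F,-)$ to the separated-and-complete case.

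The main obstacle is the lifting lemma for projectivity across a nilpotent ideal, and, to a lesser extent, the bookkeeping in the dévissage for $(1)\Rightarrow(3)$: one must make sure the reduction genuinely terminates at $I$-adically separated and complete modules (using that submodules of separated modules are separated and that every $I$-contramodule is complete) and that each use of change of rings is justified by the $\Tor$-vanishing established en route. The spectral-sequence computation behind $(1)\Leftrightarrow(2)$ and the $\varprojlim^1$-vanishing are routine.
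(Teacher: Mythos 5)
Your proof is correct, but it routes the hard implication differently from the paper. The paper closes the cycle via (2)\,$\Rightarrow$\,(3), using an abstract ``obtainability'' argument: the class of modules $C$ with $\Ext^{>0}_R(F,C)=0$ is closed under the operations of~\cite{PSl}, and all $I$\+contramodules are obtained from $R/I$\+modules by extensions, cokernels of injections of separated complete modules, and transfinitely iterated extensions in the sense of projective limits (dual Eklof lemma). You instead prove (1)\,$\Rightarrow$\,(3) concretely: you first upgrade the $I$\+adic flatness to projectivity of $F/I^jF$ over $R/I^j$ for every~$j$ via the standard lifting-of-projectivity-modulo-a-nilpotent-ideal lemma (which is correct as you state and prove it, including the idempotent lifting in the column-finite matrix ring and the Nakayama/flatness argument killing the kernel), and then compute $\boR\Hom_R(F,C)=\boR\varprojlim_n\Hom_{R/I^n}(F/I^nF,C/I^nC)$ directly for $I$\+adically separated and complete~$C$, with the $\varprojlim^1$ term killed by surjectivity of the transition maps; the remaining d\'evissage (extension of two quotseparated contramodules, each a cokernel of an injection of separated complete ones) coincides with the last step of the paper's reduction. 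Your (1)\,$\Leftrightarrow$\,(2) via the change-of-rings spectral sequence with injective $R/I$\+modules, plus the finite filtration of a finitely generated $I$\+torsion module by $R/I$\+modules, is a mild variant of the paper's argument by character modules and direct limits. The net effect is that your proof is more self-contained and elementary, replacing the cited closure machinery of~\cite{PSl} by an explicit derived-limit computation, at the cost of carrying the extra projectivity-mod-$I^j$ lemma; the paper's route is shorter given its references and applies verbatim to the more general contramodule settings it draws on.
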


\begin{proof}
 (1)~$\Longleftrightarrow$~(2)
 Let us first prove that (2)~implies the $I$\+adic flatness of~$F$.
 First of all, since the functor $\Tor$ preserves direct limits, it
suffices to check that $\Tor^R_k(N,F)=0$ for all $k>0$ and all
$R/I^n$\+modules $N$, where $n$~ranges over the positive integers.
 Next one easily reduces to the case $n=1$; so we can assume that
$N$ is an $R/I$\+module.
 It remains to consider the character module $D=N^+=
\Hom_\boZ(N,\boQ/\boZ)$ and use the natural isomorphism
$\Tor^R_k(N,F)^+\simeq\Ext_R^k(F,N^+)$.

 Now we can assume that the $R$\+module $F$ is $I$\+adically flat;
in particular, $\Tor^R_i(R/I,F)=0$ for all $i>0$.
 Then, for any $R/I$\+module $D$ and all $k\ge0$, there is a natural
isomorphism of $\Ext$ modules
$$
 \Ext^k_R(F,D)\simeq\Ext^k_{R/I}(F/IF,D).
$$
 Hence condition~(2) holds if and only if the $R/I$\+module $F/IF$
is projective.

 (3)~$\Longrightarrow$~(2) is trivial, since every $R/I$\+module
is an $I$\+contramodule $R$\+module.

 (2)~$\Longrightarrow$~(3) is an ``obtainability'' argument going back
to~\cite[proof of Theorem~9.5]{Pcta} and subsequently utilized
in the paper~\cite{PSl}.
 One proves that all the $I$\+contramodule $R$\+modules can be obtained
from $R/I$\+modules, in the relevant sense.
 Essentially, the class of all $R$\+modules $C$ satisfying~(3) for
a fixed $R$\+module $F$ is closed under certain operations, which are
listed in~\cite[Lemma~3.2 or Definition~3.3]{PSl}.
 One shows that all $I$\+contramodule $R$\+modules can be ``obtained''
from $R/I$\+modules using these operations; this is the assertion
of~\cite[Lemma~8.2]{PSl}.

 To spell out a specific argument, one can start by observing that
all $I$\+contramodule $R$\+modules are obtainable as extensions of
quotseparated $I$\+contramodule $R$\+modules
(by Proposition~\ref{extension-of-two}).
 Furthermore, any quotseparated $I$\+contramodule is obtainable as
the cokernel of an injective morphism of separated $I$\+contramodules.
 The latter are the same thing as $I$\+adically separated and
complete $R$\+modules.
 They are obtainable as transfinitely iterated extensions, in
the sense of projective limit, of $R/I$\+modules; see, e.~g.,
\cite[Proposition~18]{ET}, \cite[Lemma~9.7]{Pcta},
or~\cite[Lemma~B.10.3]{Pweak}.
\end{proof}

\begin{thm} \label{adically-flat-theorem}
 Let $X$ be an infinite set and $Q_X=\R[[X]]=\Lambda_I(R[X])$ be
the $R$\+module of decaying functions $X\rarrow\R=\Lambda_I(R)$.
 Then the $R$\+module $Q_X$ is $I$\+adically flat if and only if
the ideal $I\subset R$ is weakly proregular.
\end{thm}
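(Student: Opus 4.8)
The plan is to reduce the statement, via the $\Ext$\+characterization of Proposition~\ref{adically-projective}, to a full\+and\+faithfulness question already settled in the paper, and then to deduce the remaining ``only if'' direction directly from Theorem~\ref{when-weakly-proregular}.

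First I would note that $Q_X/IQ_X\cong R[X]/IR[X]=(R/I)^{(X)}$ is a free $R/I$\+module, hence projective, so that Proposition~\ref{adically-projective} (equivalence of~(1) and~(3)) says exactly that $Q_X$ is $I$\+adically flat if and only if $\Ext^k_R(Q_X,C)=0$ for all $I$\+contramodule $R$\+modules $C$ and all $k>0$. By Proposition~\ref{extension-of-two} and the long exact sequence of $\Ext$, this vanishing is equivalent to $\Ext^k_R(Q_X,B)=0$ for all \emph{quotseparated} $I$\+contramodule $R$\+modules $B$ and all $k>0$. So the theorem amounts to showing that this last condition holds precisely when $I$ is weakly proregular. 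The ``if'' direction is then immediate: when $I$ is weakly proregular we have $R\modl_{I\ctra}^\qs=R\modl_{I\ctra}$ by Corollary~\ref{all-quotseparated-over-weakly-proregular}, the functor $\rho^\bs$ is fully faithful by Theorem~\ref{quotseparated-I-contramodule-theorem}, and since $Q_X=\Lambda_I(R[X])$ is a projective object of $R\modl_{I\ctra}^\qs$ (Section~\ref{derived-complete-modules-secn}), Lemma~\ref{derived-full-and-faithfulness-lemma}\,(1)$\Rightarrow$(3) gives the required $\Ext$\+vanishing.

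For the converse I would argue with the canonical morphism of Theorem~\ref{when-weakly-proregular}. Put $N^\bu=\boR\Hom_R(K_\infty\spcheck(R;\s),R[X])$; by~\eqref{derived-projective-limit-sequence} it is concentrated in cohomological degrees between $-m$ and~$0$, with $H^0(N^\bu)=\Delta_I(R[X])$ by~\eqref{delta-computed}, and it lies in $\sD_{I\ctra}(R\modl)$ by Lemma~\ref{sequentially-complete}. In the notation of the proof of Theorem~\ref{quotseparated-I-contramodule-theorem}, $N^\bu=\eta(R[X])$, where $\eta$ is left adjoint to the fully faithful embedding $\iota\:\sD_{I\ctra}(R\modl)\rarrow\sD(R\modl)$. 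The completion map $R[X]\rarrow\Lambda_I(R[X])=Q_X$, whose target lies in $\sD_{I\ctra}(R\modl)$, corresponds under this adjunction to a morphism $f\:N^\bu\rarrow Q_X$ in $\sD(R\modl)$ inducing $b_{I,R[X]}\:\Delta_I(R[X])\rarrow\Lambda_I(R[X])$ on $H^0$; as $N^\bu$ has no cohomology in positive degrees, $f$ is determined by this and is the canonical morphism of Theorem~\ref{when-weakly-proregular}. Granting the $\Ext$\+vanishing established above, I would check that for every $B\in R\modl_{I\ctra}^\qs$ and every $k\in\boZ$ the map $f^*\:\Hom_{\sD(R\modl)}(Q_X,B[k])\rarrow\Hom_{\sD(R\modl)}(N^\bu,B[k])$ is bijective: for $k\neq 0$ the left\+hand group is $\Ext^k_R(Q_X,B)$, which vanishes (trivially for $k<0$, by hypothesis for $k>0$), while the right\+hand group is $\Hom_{\sD(R\modl)}(\iota\eta(R[X]),\iota(B[k]))\cong\Hom_{\sD(R\modl)}(R[X],B[k])=\Ext^k_R(R[X],B)=0$ using full faithfulness of $\iota$ and the adjunction $\eta\dashv\iota$; and for $k=0$ the map $f^*$ is the restriction $\Hom_R(Q_X,B)\rarrow\Hom_R(R[X],B)$ along the completion map, which is bijective because $Q_X=\boL_0\Lambda_I(R[X])$ is the reflection of $R[X]$ onto $R\modl_{I\ctra}^\qs$ (Proposition~\ref{quotseparated-reflector}(c)).

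Finally I would conclude that $f$ is an isomorphism. Its cone $Z$ lies in $\sD_{I\ctra}(R\modl)$ (a triangulated subcategory, by Lemma~\ref{I-contramodule-R-modules-lemma}) and is bounded above, and $\Hom_{\sD(R\modl)}(Z,B[k])=0$ for all $B\in R\modl_{I\ctra}^\qs$ and all $k$ by the previous paragraph. If $Z\neq 0$, let $q_0$ be its top nonvanishing cohomological degree; the truncation morphism $Z\rarrow H^{q_0}(Z)[-q_0]$ induces the identity on $H^{q_0}$, and, $H^{q_0}(Z)$ being a nonzero $I$\+contramodule, the completion map $H^{q_0}(Z)\rarrow\Lambda_I(H^{q_0}(Z))$ is nonzero (a zero such map would force $H^{q_0}(Z)=I\,H^{q_0}(Z)$, hence $H^{q_0}(Z)=0$ by the contramodule Nakayama lemma of~\cite{Pcta}) and has separated, hence quotseparated, target; composing gives a nonzero morphism from $Z$ into $\Lambda_I(H^{q_0}(Z))[-q_0]$, a contradiction. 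Hence $Z=0$, the canonical morphism of Theorem~\ref{when-weakly-proregular} is an isomorphism for the given infinite set~$X$, and that theorem shows $I$ is weakly proregular. The hard part will be the bookkeeping in the third paragraph — correctly identifying $f$ with the canonical morphism and computing the $\Hom$ groups via the adjunction $\eta\dashv\iota$ and full faithfulness of $\iota$; the Nakayama input and the top\+cohomology argument are routine.
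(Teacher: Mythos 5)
Your argument is correct, and your opening reduction --- via Proposition~\ref{adically-projective}\,(1)$\Leftrightarrow$(3), the freeness of $Q_X/IQ_X$ over $R/I$, and Proposition~\ref{extension-of-two} --- to the vanishing of $\Ext^k_R(Q_X,B)$ for quotseparated $I$\+contramodules $B$ and $k>0$ is exactly where the paper also starts. The difference is in how the converse is finished. The paper first propagates the $I$\+adic flatness from the single module $Q_X$ to \emph{all} projective objects of $R\modl_{I\ctra}^\qs$ (shrink $X$ to a countable set, write each $Q_Y$ as a countably-filtered direct limit of copies of $Q_X$, use closure of $I$\+adically flat modules under direct limits and direct summands), then feeds the resulting $\Ext$\+vanishing into Lemma~\ref{derived-full-and-faithfulness-lemma}\,(3)$\Rightarrow$(1) to get full-and-faithfulness of $\rho^\bs$, and concludes by Theorem~\ref{quotseparated-I-contramodule-theorem}. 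You instead stay with the one given set $X$ and prove directly that the comparison morphism $\boR\Hom_R(K_\infty\spcheck(R;\s),R[X])\rarrow\Lambda_I(R[X])$ is an isomorphism, by showing its cone is orthogonal to all shifts of quotseparated contramodules and then killing the cone with the top-truncation plus the contramodule Nakayama lemma; Theorem~\ref{when-weakly-proregular} then finishes. Your route avoids the direct-limit propagation and bypasses Theorem~\ref{quotseparated-I-contramodule-theorem} entirely (only the more elementary Theorem~\ref{when-weakly-proregular} is needed), at the price of the adjunction bookkeeping you flag and of importing the Nakayama lemma for $I$\+contramodules from~\cite{Pcta}, which this paper uses only implicitly; the paper's route is longer but slots the theorem into its uniform full-and-faithfulness framework. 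Two small points to write out in a final version: the isomorphism $Q_X/IQ_X\simeq(R/I)[X]$ is cleanest via the reflection property of Proposition~\ref{quotseparated-reflector}(c) tested against $R/I$\+modules (the paper also asserts it without proof); and note that since $f$ induces $b_{I,R[X]}$ on $H^0$ and $N^\bu$ is concentrated in nonpositive degrees, the invertibility of $f$ already yields all $2m$ conditions of Proposition~\ref{weak-proregularity-piece-by-piece}, so no further identification of $f$ with ``the'' canonical morphism is really needed.
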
 

\begin{proof}
 The ``if'' assertion is a particular case of~\cite[Theorem~1.6(1)
or Theorem~6.9]{Yek2}.
 One can also obtain it by reversing the arguments in the proof of
the ``only if'' assertion that follows below.

 The argument is somewhat similar to the proof of
Proposition~\ref{decaying-functions-not-idealistic-complete}.
 Assume that, for one particular infinite set $X$, the $R$\+module
$Q_X$ is $I$\+adically flat.
 Passing to a direct summand, we can assume the set $X$ to be
countable.
 For every infinite set $Y$, the $R$\+module $Q_Y$ is a direct limit
of $R$\+modules isomorphic to~$Q_X$.
 Since the class of $I$\+adically flat modules is closed under
direct limits, it then follows that the $R$\+module $Q_Y$ is
$I$\+adically flat as well.
 Passing to the direct summands again, we see that all the projective
objects of the category of quotseparated $I$\+contramodules
$\sB=R\modl_{I\ctra}^\qs$ are $I$\+adically flat $R$\+modules.

 Furthermore, the $R/I$\+module $Q_Y/IQ_Y\simeq(R/I)[Y]$ is obviously
free, hence projective, for any module of decaying functions~$Q_Y$.
 Therefore, the $R/I$\+module $Q/IQ$ is projective for any
projective object $Q$ of the category $R\modl_{I\ctra}^\qs$.

 By Proposition~\ref{adically-projective}\,(1)\,$\Rightarrow$\,(3),
we can conclude that $\Ext_R^k(Q,B)=0$ for all projective quotseparated
$I$\+contramodules $Q$, all (quotseparated) $I$\+contramodule
$R$\+modules $B$, and all $k>0$.
 According to Lemma~\ref{derived-full-and-faithfulness-lemma},
it then follows that the triangulated functor
$\rho^\bs\:\sD^\bs(R\modl_{I\ctra}^\qs)\rarrow\sD^\bs(R\modl)$ is fully
faithful.
 Consequently, the ideal $I\subset R$ is weakly proregular by
Theorem~\ref{quotseparated-I-contramodule-theorem}.
\end{proof}

\begin{rem}
 The notion of $I$\+adic projectivity provided by the equivalent
conditions of Proposition~\ref{adically-projective} is modelled
after the definition of $I$\+adic flatness
in~\cite[Definition~4.2]{Yek2}.
 It is designed to be relevant for the purposes of
Theorem~\ref{adically-flat-theorem}.

 A quite different property was discussed under the name of
``adic projectivity'' in~\cite[Definition~3.16]{Yek0}
and~\cite[Definition~1.4]{PSY2}.
 What would be called ``$I$\+adically projective $R$\+modules''
in the terminology of~\cite{Yek0,PSY2} are called projective
quotseparated $I$\+contramodule $R$\+modules in the present paper.
 These are the projective objects of the abelian category
$R\modl_{I\ctra}^\qs$.

 The related flatness notion is that of a flat quotseparated
$I$\+contramodule $R$\+module~\cite[Sections~5.3 and~5.6]{PSl},
\cite[Sections~5\+-7]{PR}, \cite[Section~D.1]{Pcosh}.
 A quotseparated $I$\+contramodule $R$\+module $F$ is said to be
\emph{flat} if the $R/I^n$\+module $F/I^nF$ is flat for every
$n\ge1$.
 Unless the ring $R$ is Noetherian (cf.~\cite[Corollary~10.3]{Pcta},
\cite[Theorem~1.6\,(2)]{Yek2}), a flat quotseparated $I$\+contramodule
$R$\+module need not be a flat $R$\+module~\cite[Theorem~7.2]{Yek2}.

 Any flat quotseparated $I$\+contramodule $R$\+module is $I$\+adically
separated~\cite[Corollary~D.1.7]{Pcosh}, \cite[Corollary~6.15]{PR}
(these results are applicable in view of
Proposition~\ref{top-ring-contramodules} above).
 For any short exact sequence of quotseparated $I$\+contramodule
$R$\+modules $0\rarrow B\rarrow C\rarrow F\rarrow0$ with a flat
quotseparated $I$\+contramodule $R$\+module $F$, and for any
$I$\+torsion $R$\+module $N$, the tensor product sequence
$0\rarrow N\ot_RB\rarrow N\ot_RC\rarrow N\ot_RF\rarrow0$ is
exact~\cite[Lemma~6.10]{PR}.

 The projective quotseparated $I$\+contramodule $R$\+modules can be
characterized as follows~\cite[Corollary~D.1.10]{Pcosh},
\cite[Lemma~B.10.2]{Pweak} (see~\cite[Theorem~1.10]{PSY2} for
the Noetherian case).
 For any quotseparated $I$\+contramodule $R$\+module $F$, the following
conditions are equivalent:
\begin{enumerate}
\item $F$ is a projective object in $R\modl_{I\ctra}^\qs$;
\item the $R/I^n$\+module $F/I^nF$ is projective for every $n\ge1$;
\item $F$ is a flat quotseparated $I$\+contramodule $R$\+module
and the $R/I$\+module $F/IF$ is projective.
\end{enumerate}

 Any quotseparated $I$\+contramodule $R$\+module which is $I$\+adically
flat in the sense of~\cite[Definition~4.2]{Yek2} is flat in the sense
of the definition above in this remark; and any quotseparated
$I$\+contramodule $R$\+module which is $I$\+adically projective in
the sense of the equivalent conditions in
Proposition~\ref{adically-projective} is a projective object
in $R\modl_{I\ctra}^\qs$.
 But the converse implications only hold for a weakly proregular ideal
$I\subset R$ \,\cite[Theorem~1.6\,(1)]{Yek2}.
 Otherwise, a flat quotseparated $I$\+contramodule $R$\+module need
not be $I$\+adically flat, and a projective object of
$R\modl_{I\ctra}^\qs$ need not satisfy the equivalent conditions of
Proposition~\ref{adically-projective}, as
Theorem~\ref{adically-flat-theorem} shows.

 Yet another relevant projectivity notion is that of a projective
$I$\+contramodule $R$\+module, i.~e., a projective object in
$R\modl_{I\ctra}$.
 The classes of projective objects in $R\modl_{I\ctra}$ and
in $R\modl_{I\ctra}^\qs$ coincide if and only if the two abelian
categories coincide (see Lemmas~\ref{when-projective-quotseparated}
and~\ref{when-all-quotseparated}).
\end{rem}

 Here are the dual-analogous definition and assertion.
 We say that an $R$\+module $E$ is \emph{$I$\+adically injective} if
$\Ext_R^k(N,E)=0$ for all $I$\+torsion $R$\+modules $N$ and
all $k>0$.
 In view of the Eklof lemma~\cite[Lemma~1]{ET},
\cite[Lemma~3.6(a)]{PSl}, it suffices to check this condition for
finitely generated $I$\+torsion $R$\+modules~$N$.
 (In the terminology of~\cite[Definition~2.6.1]{SS}, such $R$\+modules
$E$ would be called ``relatively-$I$-injective''.)

 The following proposition is essentially equivalent
to~\cite[Theorem~0.3]{VY} (cf.\
Proposition~\ref{torsion-injective-not-idealistic-torsion} above);
see also~\cite[Proposition~2.6 and Corollary~3.4]{SS2}.

\begin{prop}
 The ideal $I\subset R$ is weakly proregular if and only if
the $R$\+module $E=\Gamma_I(J)$ is $I$\+adically injective for every
injective $R$\+module~$J$.
\end{prop}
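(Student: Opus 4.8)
The plan is to run the dual of the argument used for Proposition~\ref{torsion-injective-not-idealistic-torsion}, reducing both implications to Theorem~\ref{I-torsion-theorem} by way of the full-and-faithfulness criterion of (the dual of) Lemma~\ref{derived-full-and-faithfulness-lemma}. Write $\sA=R\modl$ and $\sT=R\modl_{I\tors}$. First I would record the standing facts I intend to use: the inclusion $\sT\rarrow\sA$ is exact and fully faithful; $\sT$ is a Grothendieck category, so it has enough injective objects; the functor $\Gamma_I$, being right adjoint to the exact inclusion, sends injective $R$\+modules to injective objects of $\sT$; and every injective object $L$ of $\sT$ is a direct summand of some $\Gamma_I(J)$ with $J$ an injective $R$\+module (embed $L$ into an injective $R$\+module $J$, corestrict the resulting monomorphism to $\Gamma_I(J)$ since $L$ is $I$\+torsion, and split it because $L$ is injective in $\sT$). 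Finally, the class of $I$\+adically injective $R$\+modules is closed under direct summands, since $\Ext$ is additive.

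For the ``if'' direction, suppose $I$ is weakly proregular. Then Theorem~\ref{I-torsion-theorem} tells us that $\mu^\bs\:\sD^\bs(\sT)\rarrow\sD^\bs(\sA)$ is fully faithful, so for any $I$\+torsion $R$\+modules $N$ and $E$ one has $\Ext^k_R(N,E)\simeq\Ext^k_\sT(N,E)$ for all $k\ge0$. Taking $E=\Gamma_I(J)$ for an injective $R$\+module $J$, the object $E$ is injective in $\sT$, hence $\Ext^k_\sT(N,E)=0$ for $k>0$, and therefore $\Ext^k_R(N,E)=0$ for every $I$\+torsion $N$ and every $k>0$; that is, $E$ is $I$\+adically injective. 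One can also obtain this more directly: by Corollary~\ref{when-weakly-proregular-in-terms-of-injectives} the canonical map $\Gamma_I(J)\rarrow K_\infty\spcheck(R;\s)\ot_R J$ is a quasi-isomorphism, so $\Gamma_I(J)$ represents $\boR\Gamma_I(J)=\gamma(J)$, and $\boR\Hom_R(N,\gamma(J))\simeq\boR\Hom_\sT(N,\gamma(J))$ is concentrated in degree~$0$ because $\gamma(J)$ is injective in $\sT$.

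For the ``only if'' direction, assume $\Gamma_I(J)$ is $I$\+adically injective for every injective $R$\+module $J$. By closure under summands, every injective object $L$ of $\sT$ is then $I$\+adically injective, that is, $\Ext^k_\sA(T,L)=0$ for all $T\in\sT$, all injective $L\in\sT$, and all $k>0$. This is precisely condition~(3), in its dualized (injective) form, of Lemma~\ref{derived-full-and-faithfulness-lemma} applied to the fully faithful exact functor $\sT\rarrow\sA$, whose source has enough injectives; hence the dual of that lemma yields that $\mu^\bs\:\sD^\bs(\sT)\rarrow\sD^\bs(\sA)$ is fully faithful. By the converse half of Theorem~\ref{I-torsion-theorem}, the ideal $I\subset R$ is weakly proregular.

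I do not anticipate a real obstacle: the substantive content has already been packaged into Theorem~\ref{I-torsion-theorem} and Lemma~\ref{derived-full-and-faithfulness-lemma}, so the only thing to be careful about is the bookkeeping that translates the hypothesis ``$\Gamma_I(J)$ is $I$\+adically injective for all injective $J$'' into the $\Ext$\+vanishing condition for \emph{all} injective objects of $\sT$ --- for which one needs the description of the injectives of $\sT$ as summands of the $\Gamma_I(J)$ together with additivity of $\Ext$. The Eklof\+lemma reduction to finitely generated $I$\+torsion test modules, mentioned just before the statement, is convenient but not actually required for this proof.
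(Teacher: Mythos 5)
Your proof is correct and follows essentially the same route as the paper: the forward direction comes down to the $\Ext$\+comparison isomorphism available under weak proregularity (the paper cites the source of Theorem~\ref{I-torsion-theorem} directly for this) together with injectivity of $\Gamma_I(J)$ in $R\modl_{I\tors}$, and the converse applies the dual of Lemma~\ref{derived-full-and-faithfulness-lemma} followed by the converse half of Theorem~\ref{I-torsion-theorem}. The bookkeeping you flag --- passing from the modules $\Gamma_I(J)$ to all injective objects of $R\modl_{I\tors}$ via direct summands --- is exactly the step the paper also makes explicit in the analogous Proposition~\ref{torsion-injective-not-idealistic-torsion}.
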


\begin{proof}
 Put $\sA=R\modl$ and $\sT=R\modl_{I\tors}$.
 If the ideal $I\subset R$ is weakly proregular then,
by~\cite[Theorem~1.3]{Pmgm}, the map $\Ext^k_\sT(N,E)\rarrow
\Ext^k_\sA(N,E)$ induced by the exact inclusion of abelian categories
$R\modl_{I\tors}\rarrow R\modl$ is an isomorphism for all objects
$N$, $E\in R\modl_{I\tors}$ and all $k\ge0$.
 In the situation at hand with $E=\Gamma_I(J)$, the object $E$ is
injective in $R\modl_{I\tors}$; so $\Ext^k_\sA(N,E)=
\Ext^k_\sT(N,E)=0$ for $k>0$.

 Conversely, if $\Ext_\sA^k(N,E)=0$ for all $N\in\sT$, \
$E=\Gamma_I(J)$, and $k>0$, where $J$ ranges over all the injective
$R$\+modules, then the dual version of
Lemma~\ref{derived-full-and-faithfulness-lemma} tells us that
the functor $\mu^\bs\:\sD^\bs(R\modl_{I\tors})\rarrow\sD^\bs(R\modl)$
is fully faithful.
 Applying Theorem~\ref{I-torsion-theorem}, we conclude that
the ideal $I\subset R$ is weakly proregular.
\end{proof}

\Section{Conclusion}

 For any finitely generated ideal $I$ in a commutative ring $R$,
$R\modl_{I\ctra}^\qs\subset R\modl_{I\ctra}\subset R\modl$ are two
abelian full subcategories in the category of $R$\+modules $R\modl$.
 Trying to follow Yekutieli's suggested terminology~\cite{Yek3},
one could say that the category of $I$\+contramodule $R$\+modules
$R\modl_{I\ctra}$ is ``the category of derived $I$\+adically complete
modules in the sequential sense'', while the category of
quotseparated $I$\+contramodule $R$\+modules $R\modl_{I\ctra}^\qs$ is
``the category of derived $I$\+adically complete modules in
the idealistic sense''.

 What Yekutieli~\cite{Yek3} calls ``the category of derived
$I$\+adically complete complexes in the sequential sense'' is,
in our language, the full subcategory $\sD_{I\ctra}(R\modl)\subset
\sD(R\modl)$ of complexes of $R$\+modules with $I$\+contramodule
cohomology modules.

 What Yekutieli~\cite{Yek3} calls ``the category of derived
$I$\+adically complete complexes in the idealistic sense'' is likely
to be too small.
 We would suggest to modify the definition by considering
the derived category $\sD(R\modl_{I\ctra}^\qs)$ as the proper
version/replacement of the category of derived $I$\+adically complete
complexes in the idealistic sense.
 Then there is the triangulated functor $\rho\:\sD(R\modl_{I\ctra}^\qs)
\rarrow\sD(R\modl)$, but it is only fully faithful when the ideal $I$
is weakly proregular. {\emergencystretch=1em\par}

 As a middle ground between the above two versions of ``the category
of derived $I$\+adically complete complexes'', one can also consider
the derived category $\sD(R\modl_{I\ctra})$.
 Then there is the triangulated functor $\pi\:\sD(R\modl_{I\ctra})
\rarrow\sD(R\modl)$, which is also, generally speaking,
not fully faithful.

 To summarize the discussion of derived $I$\+adically complete
complexes and derived $I$-adic completions, there is a diagram
of triangulated functors
\begin{equation} \label{three-derived-complete-categories}
\begin{tikzcd}
\sD(R\modl_{I\ctra}^\qs) \arrow[rr, "\beta"] \arrow[rrdd, "\rho"]
&&\sD(R\modl_{I\ctra}) \arrow[rr, "\zeta"] \arrow[dd, "\pi"]
&&\sD_{I\ctra}(R\modl) \arrow[lldd, tail, "\iota"] \\ \\
&& \sD(R\modl) \arrow[lluu, bend left = 22, "\lambda"]
\arrow[uu, bend left = 30, "\delta"]
\arrow[rruu, two heads, bend left = 15, "\eta" near start]
\end{tikzcd}
\end{equation}
 Here straight arrows form a commutative diagram.
 The curvilinear arrows show left adjoint functors.
 Only the functor shown by the rightmost diagonal arrow (with a tail)
is fully faithful in general.
 The diagonal arrow with two heads on the right-hand side of
the diagram is a Verdier quotient functor.
 
 The sequential derived $I$-adic completion functor is the composition
of the two adjoint functors on the right-hand side,
$$
 \iota\circ\eta=\boR\Hom_R(K_\infty\spcheck(R;\s),{-})\:
 \sD(R\modl)\lrarrow\sD(R\modl).
$$
 The idealistic derived $I$-adic completion functor is the composition
of the two adjoint functors on the left-hand side,
$$
 \rho\circ\lambda=\boL\Lambda_I\:\sD(R\modl)\lrarrow\sD(R\modl).
$$
 
 The leftmost horizontal functor~$\beta$
in~\eqref{three-derived-complete-categories} is fully faithful
if and only it is a triangulated equivalence (and if and only if
$R\modl_{I\ctra}^\qs=R\modl_{I\ctra}$).
 Similarly, the rightmost horizontal functor~$\zeta$ is fully faithful
if and only if it is a triangulated equivalence.
 The composition of the two horizontal functors (denoted by~$\xi$
in Section~\ref{full-and-faithfulness-secn}) is fully faithful
if and only if it is a triangulated equivalence.

 If the ideal $I$~is weakly proregular, then both horizontal functors
in~\eqref{three-derived-complete-categories} are triangulated
equivalences.
 If the ideal $I$~is not weakly proregular, then the composition
$\xi=\zeta\circ\beta$ of the two horizontal functors is not
an equivalence (but one of them can be).
 See Remark~\ref{main-remark} for further discussion.

\medskip

 The situation with derived torsion complexes is similar but simpler.
 There is only one abelian category of $I$\+torsion $R$\+modules,
$R\modl_{I\tors}\subset R\modl$.

 What is called ``the category of derived $I$\+torsion complexes in
the sequential sense'' in~\cite{Yek3} is, in our language, the full
subcategory $\sD_{I\tors}(R\modl)\subset\sD(R\modl)$ of complexes of
$R$\+modules with $I$\+torsion cohomology modules.

 What is called ``the category of derived $I$\+torsion complexes in
the idealistic sense'' in~\cite{Yek3} is likely to be too small.
 We would suggest to modify the definition by considering the derived
category $\sD(R\modl_{I\tors})$ as the proper version/replacement of
the category of derived $I$\+torsion complexes in the idealistic sense.
 Then there is the triangulated functor $\mu\:\sD(R\modl_{I\tors})
\rarrow\sD(R\modl)$, but it is only fully faithful when the ideal $I$
is weakly proregular.

 To summarize the discussion of derived $I$\+torsion complexes and
derived $I$-torsion functors, there is a diagram of triangulated
functors
\begin{equation} \label{two-derived-torsion-categories}
\begin{tikzcd}
\sD(R\modl_{I\tors}) \arrow[rrrr,"\chi"] \arrow[rrdd, "\mu"']
&&&&\sD_{I\tors}(R\modl) \arrow[lldd, tail, "\upsilon"'] \\ \\
&& \sD(R\modl) \arrow[lluu, bend right = 16, "\gamma"' near start]
\arrow[rruu, two heads, bend right = 21, "\theta"']
\end{tikzcd} 
\end{equation}
 Here straight arrows form a commutative triangular diagram.
 The curvilinear arrows show right adjoint functors.
 The diagonal arrow with a tail on the right-hand side shows a fully
faithful functor.
 The rightmost diagonal arrow with two heads shows a Verdier quotient
functor.

 The sequential derived $I$-torsion functor is the composition of
the two adjoint functors on the right-hand side,
$$
 \upsilon\circ\theta=K_\infty\spcheck(R;\s)\ot_R{-}\:
 \sD(R\modl)\lrarrow\sD(R\modl).
$$
 The idealistic derived $I$-torsion functor is the composition of
the two adjoint functors on the left-hand side,
$$
 \mu\circ\gamma=\boR\Gamma_I\:\sD(R\modl)\lrarrow\sD(R\modl).
$$

 The horizontal functor~$\chi$
in~\eqref{two-derived-torsion-categories}
is fully faithful if and only if it is a triangulated equivalence,
and if and only if the ideal $I\subset R$ is weakly proregular.

\bigskip

\end{document}